\documentclass{amsart}
\usepackage{latexsym,amsxtra,amscd,ifthen}
\usepackage{amsfonts}
\usepackage{verbatim}
\usepackage{amsmath}
\usepackage{amsthm}
\usepackage{amssymb}
\usepackage{url,color}
\usepackage[arrow,matrix]{xy}

\allowdisplaybreaks[4]

\newtheorem{theorem}{Theorem}[section]

\newtheorem{corollary}[theorem]{Corollary}
\newtheorem{lemma}[theorem]{Lemma}

\newtheorem{claim}[theorem]{Claim}

\newtheorem{proposition}[theorem]{Proposition}
\newtheorem{prop-def}[theorem]{Proposition-Definition}
\theoremstyle{definition}
\newtheorem{definition}[theorem]{Definition}
\newtheorem{remark}[theorem]{Remark}

\newtheorem{example}[theorem]{Example}
\numberwithin{equation}{section}
\def\la{\lambda}

\def\U{\mathcal{U}}
\def\P{\mathcal{P}}
\def\K{\Bbbk}
\def\ot{\otimes}
\def\deg{{\rm deg}}
\def\al{\alpha}

\def\xr{\xrightarrow}

\def\N{\mathbb{N}}

\def\id{{\rm id}}

\def\Sum{\sum\limits}
\def\Ker{{\rm Ker}}
\def\Der{{\rm Der}}
\def\1{\mathbbold{1}}

\def\ad{{\rm ad}}

\def\d{{\rm d}}
\def\fm{\mathfrak m}

\newcommand{\pp}[1]{{#1}^{\{p\}}}
\newcommand{\pk}[1]{{#1}^{\{3\}}}

\newcommand{\pian}[2]{\dfrac{\partial #1}{\partial #2}}

\parskip 5pt

\begin{document}
\title[Restricted Poisson Algebras]
{Restricted Poisson Algebras
}

\author{Y.-H. Bao, Y. Ye and J.J. Zhang}

\address{Bao: School of Mathematical Sciences,
Anhui University, Hefei, 230601, China}

\email{baoyh@ahu.edu.cn, yhbao@ustc.edu.cn}

\address{Ye: School of Mathematical Sciences,
University of Sciences and Technology of China,
Hefei, 230026, China $^1$\\
Wu Wen-Tsun Key Laboratory of Mathematics, USTC,
Chinese Academy of Sciences,
Hefei, 230026, China $^2$}

\email{yeyu@ustc.edu.cn}

\address{Zhang: Department of Mathematics, Box 354350,
University of Washington, Seattle, Washington 98195, USA}

\email{zhang@math.washington.edu}

\subjclass[2010]{17B63, 17B50}


\keywords{Restricted Poisson algebras, deformation quantization,
restricted Lie algebra, restricted Lie-Rinehart algebra,
restricted Poisson Hopf algebra}

\begin{abstract}
We re-formulate Bezrukavnikov-Kaledin's definition of a restricted Poisson algebra,
provide some natural and interesting examples, and discuss connections with other
research topics.
\end{abstract}

\maketitle

\dedicatory{}%
\commby{}%

\setcounter{section}{-1}
\section{Introduction}
\label{xxsec0}

The Poisson bracket was introduced by Poisson as a tool for classical
dynamics in 1809 \cite{Po}. Poisson geometry has become an active
research field during the past 50 years. The study of Poisson algebras
over ${\mathbb R}$ or a field of characteristic zero \cite{L-GPV} also
has a long history, and is closely related to noncommutative algebra,
differential geometry, deformation quantization, number theory,
and other areas. The notion of a \emph{restricted Poisson algebra} was
introduced about ten years ago in an important paper of
Bezrukavnikov-Kaledin \cite{BK} in the study of deformation
quantization in positive characteristic. The project in \cite{BK}
is a natural extension of the classical deformation quantization
of symplectic (or Poisson) manifolds.

Our first goal is to better understand Bezrukavnikov-Kaledin's
definition via a Lie algebraic
approach. We re-interpret their definition in the following way.

Throughout the paper let $\K$ be a base field of characteristic
$p\geq 3$. All vector spaces and algebras are over $\K$.

\begin{definition}
\label{xxdef0.1}
Let $(A, \{-,-\})$ be a Poisson algebra over $\Bbbk$.
\begin{enumerate}
\item[(1)]
We call $A$ a {\it weakly restricted Poisson algebra} if
there is a $p$-map operation $x\mapsto \pp{x}$ such that
$(A,\{-,-\}, \pp{(-)})$
is a restricted Lie algebra.
\item[(2)]
We call $A$ a {\it restricted Poisson algebra} if $A$ is a
weakly restricted Poisson algebra and the $p$-map $\pp{(-)}$
satisfies
\begin{equation}
\label{E0.1.1}\tag{E0.1.1}
\pp{(x^2)}=2x^p \pp{x}
\end{equation}
for all $x\in A$.
\end{enumerate}
\end{definition}

The formulation in \eqref{E0.1.1} is slightly simpler than the
original definition.
We will show that Definition \ref{xxdef0.1}(2) is
equivalent to \cite[Definition 1.8]{BK} in Lemma \ref{xxlem3.7}.
Generally it is not easy to prove basic properties for restricted
Poisson algebras. For example, it is not straightforward to show
that the tensor product preserves the restricted Poisson structure.
Different formulations are helpful in understanding and proving some
elementary properties.

Since there are several structures on a restricted Poisson algebra,
it is delicate to verify all compatibility conditions. There are not
many examples given in the literature. Our second
goal is to provide several canonical examples from different research
subjects. Restricted Poisson algebras can be viewed as a Poisson version
of restricted Lie algebras, so the first few examples come from
restricted (or modular) Lie theory. Let $L$ be a restricted
Lie algebra over $\K$. Then the trivial extension algebra $\K\oplus L$
(with $L^2=0$) is a restricted Poisson algebra. More naturally we have
the following.

\begin{theorem}[Theorem \ref{xxthm6.5}]
\label{xxthm0.2}
Let $L$ be a restricted Lie algebra over $\K$ and let $s(L)$ be
the $p$-truncated symmetric algebra. Then $s(L)$ admits a natural
restricted Poisson structure induced by the restricted Lie
structure of $L$.
\end{theorem}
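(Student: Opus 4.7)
The plan is to build the Poisson bracket and the $p$-map on $s(L)$ separately and then verify the axioms of Definition \ref{xxdef0.1}(2).

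\textbf{Poisson bracket.} First I would descend the Kostant--Kirillov--Souriau bracket from $S(L)$ to $s(L)=S(L)/(x^p:x\in L)$. The defining ideal is a Poisson ideal because
\[\{x^p,y\}=p\,x^{p-1}\{x,y\}=0\]
for every $x\in L$, $y\in S(L)$ in characteristic $p$, so the quotient inherits a canonical Poisson structure with $\{x,y\}=[x,y]_L$ on generators $x,y\in L$.

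\textbf{The $p$-map.} On generators $x\in L$ I would set $\pp{x}:=x^{[p]}$, the given restriction on $L$. To extend to all of $s(L)$ I have two structural constraints available: the Jacobson formula (an axiom of restricted Lie algebras) expresses $(x+y)^{\{p\}}$ in terms of $\pp{x}$, $\pp{y}$, and iterated Poisson brackets, while \eqref{E0.1.1} expresses $(x^2)^{\{p\}}$ as $2x^p\pp{x}$. Polarizing \eqref{E0.1.1} produces a formula for $(xy)^{\{p\}}$ in terms of $\pp{x}$, $\pp{y}$, $x^p$, $y^p$ and Poisson-bracket correction terms; iterating this recipe across a PBW basis $\{e_i\}$ of $L$ inductively determines $\pp{(e^\alpha)}$ for every truncated monomial $e^\alpha$, and the Jacobson formula then handles sums.

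\textbf{Verification.} The Poisson axioms are immediate from the first step; Frobenius semilinearity, the Jacobson formula, and \eqref{E0.1.1} are baked into the construction. The substantive restricted Lie condition is $\ad(\pp{x})=\ad(x)^p$: for $x\in L$ this follows by extending the corresponding identity on $L$ to derivations of $s(L)$, using that a $p$-th iterate of a derivation in characteristic $p$ is itself a derivation; one then bootstraps to a general $x\in s(L)$ via the product and sum formulas defining $\pp{(-)}$.

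\textbf{Main obstacle.} The central technical hurdle is well-definedness of $\pp{(e^\alpha)}$: the polarization-based formula must not depend on the chosen factorization of the monomial, nor on the chosen basis. A hands-on proof requires verifying a family of consistency relations, which is tractable but unenlightening. A cleaner alternative I would pursue is to identify $s(L)$ with the associated graded of the restricted enveloping algebra $u(L)$ and induce the $p$-map from the associative $p$-th power operation on $u(L)$; this makes well-definedness automatic and transparently realizes the claimed naturality with respect to morphisms of restricted Lie algebras.
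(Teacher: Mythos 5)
Your overall strategy --- extend the $p$-map from generators to monomials by polarizing \eqref{E0.1.1}, then pass to the truncated algebra --- is essentially the route the paper takes (Theorem \ref{xxthm6.1} via Example \ref{xxex6.2} applied to $S(L)$, followed by the quotient by the Poisson ideal $J=(x_i^p)$, which is restricted because $\pp{(x_i^p)}=0$ by Proposition \ref{xxpro3.5}(3) and Lemma \ref{xxlem4.2}). The gap is that you defer exactly the step that carries the mathematical content. The ``family of consistency relations'' you set aside as tractable is not routine: to see that the inductively defined $\pp{(e^\alpha)}$ is independent of the chosen factorization of a monomial, and that \eqref{E3.5.1} then holds for \emph{all} pairs rather than just the generating ones, one needs the cocycle identity
\[
f^p\Phi_p(g,h)-\Phi_p(fg,h)+\Phi_p(f,gh)-h^p\Phi_p(f,g)=0
\]
together with $\Phi_p(f,g+h)-\Phi_p(f,g)-\Phi_p(f,h)=\Lambda_p(fg,fh)-f^p\Lambda_p(g,h)$ (Lemma \ref{xxlem4.5}). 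The paper proves these by passing to a free Poisson algebra, embedding its K\"ahler differentials into the Poisson enveloping algebra, and separating the ambiguity $A^p$ from the bigraded submodule $A^c$ containing all $\Lambda_p$-terms (Lemmas \ref{xxlem2.5} and \ref{xxlem2.6}); only then do Proposition \ref{xxpro4.6}(2) and Theorem \ref{xxthm4.7} propagate \eqref{E3.5.1} from a basis to everything. Without some version of this, your induction is not known to be well defined. A smaller point: if you build the $p$-map directly on $s(L)$ rather than on $S(L)$ and then quotient, you incur extra consistency checks at truncated monomials whose product is zero; the paper avoids these by working in $S(L)$ and invoking Lemma \ref{xxlem4.2} and Proposition \ref{xxpro4.3}.

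Your proposed alternative via $\gr u(L)\cong s(L)$ is genuinely different from the paper's argument but is not ``automatic.'' To induce a $p$-map on symbols you must show that for $a\in F_nu(L)$ one has $a^p\in F_{p(n-1)+1}u(L)$, i.e.\ that the $p-2$ intermediate graded components of $a^p$ vanish; this is the filtered analogue of the hypothesis $M^p_n(f)=0$ for $1\le n\le p-2$ in Proposition \ref{xxpro7.1}, which the paper needs an entire combinatorial appendix to verify even in the constant-bracket case, and which here would require redoing the Rees-algebra degree bookkeeping of Lemma \ref{xxlem3.7}. One must also still verify \eqref{E0.1.1} for the induced map and extend it to non-homogeneous elements (the $p$-map is not additive). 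So the alternative trades one set of verifications for another of comparable weight; as written, neither route in your proposal is complete.
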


To use ideas from Poisson geometry, it is a good idea to extend
the restricted Poisson structure to the symmetric algebra of a
restricted Lie algebra [Example \ref{xxex6.2}]. The following
result is slightly more general and useful in other setting.

\begin{theorem}[Theorem \ref{xxthm6.1}]
\label{xxthm0.3}
Let $T$ be an index set and
$A=\K[x_i \mid i\in T]$ be a polynomial Poisson algebra.
If, for each $i\in T$, there exists $\gamma(x_i)\in A$
such that $\ad_{x_i}^p=\ad_{\gamma(x_i)}$, then $A$ admits
a restricted Poisson structure $\pp{(-)}\colon A \to A$
such that $\pp{x_i}=\gamma(x_i)$ for all $i\in T$.
\end{theorem}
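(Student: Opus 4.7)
The plan is to construct the $p$-map $\pp{(-)}\colon A\to A$ by extending the prescribed values $\gamma(x_i)$ from the algebra generators to all of $A$, combining the Jacobson sum formula from restricted Lie theory with a Leibniz-type product formula forced by \eqref{E0.1.1}. The strategy splits naturally: first build $\pp{(-)}$ on standard monomials $x^I=x_{i_1}^{a_1}\cdots x_{i_n}^{a_n}$, then extend linearly to all of $A$, and finally verify the restricted Lie axioms and \eqref{E0.1.1}.

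The first step is to extract from \eqref{E0.1.1} an expression for $\pp{(fg)}$ in terms of $\pp{f}$, $\pp{g}$, and iterated Poisson brackets of $f$ and $g$. Applying \eqref{E0.1.1} to $(f+g)^2=f^2+2fg+g^2$, then using the commutative Frobenius identity $(f+g)^p=f^p+g^p$ and the Jacobson formula $\pp{(f+g)}=\pp{f}+\pp{g}+\Lambda(f,g)$ on both sides, one solves for $\pp{(fg)}$ and obtains an identity of the shape $\pp{(fg)}=f^p\pp{g}+g^p\pp{f}+\Phi(f,g)$, with $\Phi(f,g)$ a universal polynomial in iterated Poisson brackets of $f$ and $g$. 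I expect such a product formula to have been established in an earlier section of the paper. Iterating this formula, with base case $\pp{x_i}:=\gamma(x_i)$, defines $\pp{(x^I)}$ unambiguously on every standard monomial, and the Jacobson sum formula then extends $\pp{(-)}$ to all of $A$.

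The main obstacle is verifying the restricted Lie identity $\ad_{\pp{f}}=\ad_f^p$ for every $f\in A$, given the hypothesis only on generators $x_i$. I plan to establish it by a two-step induction. First, propagate from generators to arbitrary monomials using the product formula together with the Poisson Leibniz rule $\ad_{fg}=f\,\ad_g+g\,\ad_f$ and the characteristic-$p$ observation that each $f^p$ is Poisson-central (so $\ad_{f^p}=0$). Second, propagate from monomials to polynomials via the classical characteristic-$p$ identity relating $(\ad_f+\ad_g)^p$ to $\ad_f^p+\ad_g^p$ plus Jacobson-type cross-terms, interpreted inside the Lie algebra $\Der(A)$. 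Once this axiom is established on $A$, the remaining restricted Lie axioms hold by construction, and condition \eqref{E0.1.1} is built in on generators and extends to all of $A$ via the same product formula used to define $\pp{(-)}$.
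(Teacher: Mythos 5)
Your overall architecture coincides with the paper's: derive the product formula $\pp{(fg)}=f^p\pp{g}+g^p\pp{f}+\Phi_p(f,g)$ from \eqref{E0.1.1} (this is Proposition \ref{xxpro3.5}(2)), use it to define $\pp{(x^I)}$ on monomials with base case $\pp{x_i}=\gamma(x_i)$, invoke Jacobson's theorem (Lemma \ref{xxlem1.3}) to obtain a weakly restricted structure, and check compatibility afterwards; your propagation of $\ad_f^p=\ad_{\pp{f}}$ from generators to products via the Leibniz rule is exactly Lemma \ref{xxlem3.2}(1).

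There is, however, a genuine gap at the step you dispose of in one clause: that iterating the product formula ``defines $\pp{(x^I)}$ unambiguously'' and that \eqref{E0.1.1} then ``extends to all of $A$ via the same product formula.'' A monomial factors in many ways, and the recursion only fixes $\pp{(x^I)}$ after you choose one factorization (the paper always splits off the lowest-index variable, see \eqref{E6.1.1}). You must then prove that \eqref{E3.5.1} holds for \emph{every} pair $(x,y)$ --- in particular for factorizations other than the defining one, and for sums of monomials. This rests on two nontrivial universal identities for $\Phi_p$: the cocycle identity $f^p\Phi_p(g,h)-\Phi_p(fg,h)+\Phi_p(f,gh)-h^p\Phi_p(f,g)=0$ and the additivity defect $\Phi_p(f,g+h)-\Phi_p(f,g)-\Phi_p(f,h)=\Lambda_p(fg,fh)-f^p\Lambda_p(g,h)$ (Lemma \ref{xxlem4.5}(1),(3)), which the paper proves by embedding the free Poisson algebra into its Poisson enveloping algebra and exploiting a bigrading; these feed into Proposition \ref{xxpro4.6} and Theorem \ref{xxthm4.7}, which reduce \eqref{E3.5.1} to pairs of basis monomials. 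Without something playing the role of these identities, neither the consistency of your recursive definition nor the validity of \eqref{E0.1.1} on a general $f=\sum_I\lambda_I x^I$ (whose square produces cross terms with $\Lambda_p$ corrections in the $p$-map) is established; that is where essentially all of the work in the paper's Sections 3--4 is concentrated.
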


The next example comes from deformation theory,
which is also considered in \cite{BK}. See \eqref{E7.0.1}
for the definition of $M_n^p(f)$.

\begin{proposition}[Proposition \ref{xxpro7.1}]
\label{xxpro0.4}
Let $(A, \cdot, \{-,-\})$ be a Poisson algebra over $\K$
and let $(A[[t]], \ast)$ be a deformation
quantization of $A$.  If $M_n^p(f)=0$ for $1\le n\le p-2$
and $f^p$ is central in $A[[t]]$ for all $f\in A$,
then $A$ admits a restricted Poisson structure.
\end{proposition}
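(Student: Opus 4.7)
The plan is to define $\pp{f} := M_{p-1}^p(f)$ for $f \in A$ and verify that this endows $A$ with a restricted Poisson structure in the sense of Definition \ref{xxdef0.1}. By the vanishing hypothesis, one has
\[
f^{\ast p} \equiv f^p + t^{p-1}\pp{f} \pmod{t^p} \qquad (f\in A).
\]
The strategy is to transfer the restricted Lie axioms from the associative algebra $(A[[t]], \ast, (-)^{\ast p})$ down to $(A, \{-,-\}, \pp{(-)})$ by extracting appropriate coefficients of $t$, and then to establish the compatibility \eqref{E0.1.1} by a direct associativity computation.

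For the restricted Lie axioms, the $p$-semi-linearity $\pp{(cf)} = c^p\pp{f}$ for $c \in \K$ is immediate from $(cf)^{\ast p} = c^p f^{\ast p}$. For $\ad_{\pp{f}} = \ad_f^p$, I would compute $[f^{\ast p}, g]_\ast$ for $g \in A$ in two ways: first, by associativity of $\ast$, it equals the $p$-th $\ast$-iterated adjoint, which using $[a,b]_\ast = t\{a,b\} + O(t^2)$ reduces to $t^p \ad_f^p(g) + O(t^{p+1})$; second, expanding $f^{\ast p} = f^p + t^{p-1}\pp{f} + O(t^p)$ and using centrality of $f^p$ to kill the leading term, it equals $t^p\{\pp{f}, g\} + O(t^{p+1})$. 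Comparing $t^p$-coefficients yields the identity. The Jacobson additivity formula $\pp{(f+g)} = \pp{f} + \pp{g} + \Lambda_p(f,g)$ follows from the associative Jacobson formula $(f+g)^{\ast p} = f^{\ast p} + g^{\ast p} + \Lambda_p^\ast(f,g)$ in $(A[[t]], \ast)$ by the same $t^{p-1}$-coefficient extraction, since each iterated $\ast$-commutator in $\Lambda_p^\ast$ has leading order $t^{p-1}$ whose coefficient is the corresponding Poisson iterated bracket, while Frobenius's identity $(f+g)^p = f^p + g^p$ in the commutative algebra $A$ absorbs the $t^0$-term.

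The substantive step is \eqref{E0.1.1}. I would first write $x \ast x = x^2 + t\beta$ with $\beta \in A[[t]]$ and expand $(x^2)^{\ast p} = ((x \ast x) - t\beta)^{\ast p}$ via the associative Jacobson formula: each correction either equals $(-t\beta)^{\ast p} = O(t^p)$ or contains at least one factor of $t\beta$ inside an iterated $\ast$-commutator which already contributes $t^{p-1}$, so every correction is $O(t^p)$. Hence
\[
(x^2)^{\ast p} \equiv (x\ast x)^{\ast p} = (x^{\ast p})^{\ast 2} \pmod{t^p}
\]
by associativity of $\ast$. Substituting $x^{\ast p} = x^p + t^{p-1}\pp{x} + O(t^p)$ and using $2p-2 \ge p$ for $p \ge 3$,
\[
(x^{\ast p})^{\ast 2} \equiv x^p \ast x^p + t^{p-1}\bigl(x^p \ast \pp{x} + \pp{x} \ast x^p\bigr) \pmod{t^p}.
\]
The centrality of $x^p$ together with the classical-limit congruence $x^p \ast h \equiv x^p h \pmod t$ collapses the cross term to $2 t^{p-1} x^p \pp{x}$ modulo $t^p$. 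Comparing with the direct expansion $(x^2)^{\ast p} \equiv x^{2p} + t^{p-1}\pp{(x^2)} \pmod{t^p}$ and invoking the congruence $x^p \ast x^p \equiv x^{2p} \pmod{t^p}$ gives $\pp{(x^2)} = 2x^p\pp{x}$, as desired.

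The main obstacle I expect is this last congruence $x^p \ast x^p \equiv x^{2p} \pmod{t^p}$, which does not follow from $\ast$-centrality of $x^p$ by itself. Establishing it requires combining the centrality with the vanishing hypothesis applied at $f = x^p$ (i.e.\ $M_n^p(x^p) = 0$ for $1 \le n \le p-2$) and with associativity of $\ast$, in order to control the symmetric bidifferential corrections in $x^p \ast x^p$ at orders $t, \ldots, t^{p-1}$. Once this congruence is secured, the rest of the verification amounts to routine $t$-adic bookkeeping.
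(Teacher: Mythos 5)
Your proposal is correct in outline and takes essentially the same route as the paper's proof of Proposition \ref{xxpro7.1}: define $\pp{f}=M^p_{p-1}(f)$, read off the restricted Lie axioms from the $t^{p-1}$-coefficients of the associative identities for $(-)^{\ast p}$, and obtain \eqref{E0.1.1} by computing $f^{\ast 2p}$ modulo $t^p$ both as $f^{\ast p}\ast f^{\ast p}$ and as $(f\ast f)^{\ast p}=(f^2+tW)^{\ast p}$. The congruence you single out as the main obstacle, $f^p\ast f^p\equiv f^{2p}\pmod{t^p}$, is precisely the step the paper uses silently in the line $f^{\ast p}\ast f^{\ast p}\equiv f^{2p}+2f^pM^p_{p-1}(f)t^{p-1}$ (note that comparing the two expansions of $f^{\ast 2p}$ already forces $m_n(f^p,f^p)=0$ for $1\le n\le p-2$, so only the $t^{p-1}$-coefficient is genuinely at issue); it is immediate in the paper's examples, where one has the stronger identity $f^p\ast g=f^pg$, but it is not derived from centrality alone there either, so you have correctly isolated the one delicate point rather than missed an idea the paper supplies.
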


A Lie-Rinehart algebra is an algebraic counterpart of
a Lie algebroid, and appears naturally in the study of
Gerstenhaber algebras, Batalin-Vilkovisky algebras
and Maurer-Cartan algebras \cite{Hu1, Hu2}. In this paper,
we also study the relationship between restricted Poisson
algebras and restricted Lie-Rinehart algebras.

\begin{theorem}[Theorem \ref{xxthm8.2}]
\label{xxthm0.5}
Let $(A, \cdot, \{-,-\}, \pp{(-)})$ be a
restricted Poisson algebra. If the K\"ahler differential
$\Omega_{A/\K}$ is free over $A$,
then $(A, \Omega_{A/\K}, (-)^{[p]})$ is a restricted Lie-Rinehart algebra,
where the $p$-map of $\Omega_{A/\K}$ is determined by
\begin{align*}
&(x\d u)^{[p]}= x^p \d\pp{u}+(x \d u)^{p-1}(x)\d u,
\end{align*}
for all $x\d u \in \Omega_{A/\K}$.
\end{theorem}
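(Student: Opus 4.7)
The plan is to construct the $p$-operation on $\Omega_{A/\K}$ by first specifying it on exact forms via $(\d u)^{[p]} := \d\pp{u}$, then extending to all of $\Omega_{A/\K}$ using freeness together with the standard formulas required of a restricted Lie-Rinehart algebra, and finally verifying the three axioms.

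Recall that the Poisson bracket on $A$ makes $\Omega_{A/\K}$ into a Lie-Rinehart algebra over $A$, with anchor $\rho(x\d u)=xX_u$ (where $X_u=\{u,-\}$) and Koszul bracket $[x\d u,y\d v]=xy\,\d\{u,v\}+x\{u,y\}\d v-y\{v,x\}\d u$; in particular $\d\colon A\to\Omega_{A/\K}$ is a Lie algebra morphism. The classical identity $(xD)^p=x^pD^p+(xD)^{p-1}(x)D$ for any $\K$-derivation $D$ in characteristic $p$, combined with the restricted Poisson identity $X_u^p=X_{\pp{u}}$ (which is a consequence of Definition \ref{xxdef0.1} and Lemma \ref{xxlem3.7}), yields $(xX_u)^p=x^pX_{\pp{u}}+(xX_u)^{p-1}(x)X_u$. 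This is precisely $\rho$ applied to the proposed formula $x^p\d\pp{u}+(x\d u)^{p-1}(x)\d u$, so the formula is anchor-compatible on generators.

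To construct the $p$-map itself, I would use the freeness of $\Omega_{A/\K}$ to choose an $A$-basis of the form $\{\d u_\alpha\}_\alpha$, set $(\d u_\alpha)^{[p]}:=\d\pp{u_\alpha}$, and extend by Jacobson's formula on $\K$-linear combinations and by the $A$-scaling rule $(x\omega)^{[p]}=x^p\omega^{[p]}+(x\omega)^{p-1}(x)\omega$. The identity $(\d u)^{[p]}=\d\pp{u}$ should then hold for every $u\in A$, not only for the basis choices, because both sides of this identity, viewed as functions of $u$, satisfy the Jacobson polynomial identity for a restricted $p$-map: on the left by construction, on the right because $\d$ intertwines $\{-,-\}$ with the Koszul bracket and $\pp{(-)}$ is a restricted $p$-map on $A$. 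The explicit formula in the statement for $(x\d u)^{[p]}$ then follows from the $A$-scaling rule.

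The principal obstacle is the well-definedness of this extension: one must verify that the formula is compatible with the defining relations of $\Omega_{A/\K}$ --- most delicately the Leibniz rule $\d(uv)=u\,\d v+v\,\d u$ and the $\K$-linearity $\d(u+v)=\d u+\d v$. These checks draw on the restricted Poisson axioms, in particular $\pp{(x^2)}=2x^p\pp{x}$ from \eqref{E0.1.1} and the Jacobson polynomial identity for $\pp{(u+v)}$; the freeness hypothesis is essential, since it lets us pick a basis on which the $p$-map is declared rather than having to solve a system of consistency relations. Once well-definedness is settled, the three axioms --- restricted Lie algebra structure on $\Omega_{A/\K}$, anchor compatibility $\rho(\omega^{[p]})=\rho(\omega)^p$, and the $A$-scaling formula --- follow either by construction or from the derivation identity above, with the $A$-scaling formula effectively encoding the content of the explicit expression in the statement.
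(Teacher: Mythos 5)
Your overall strategy (use freeness to reduce to a basis, invoke Jacobson/Dokas to build the $p$-map, verify the three axioms of a restricted Lie--Rinehart algebra) is the same skeleton as the paper's, but the step that carries essentially all of the content is missing. You verify only the anchor condition: using $(xD)^p=x^pD^p+(xD)^{p-1}(x)D$ for $D=X_u\in\Der_\K(A)$ you show $\rho\bigl((x\d u)^{[p]}\bigr)=\rho(x\d u)^p$ as derivations of $A$. That is axiom (b) of Definition \ref{xxdef8.1}. But axiom (a) requires $(\Omega_{A/\K},(-)^{[p]})$ to be a restricted Lie algebra, i.e. $\ad_{x\d u}^p=\ad_{(x\d u)^{[p]}}$ as operators on $\Omega_{A/\K}$ \emph{itself}, and the anchor $\rho\colon\Omega_{A/\K}\to\Der_\K(A)$ is in general not injective, so the identity in $\Der_\K(A)$ does not imply the identity you actually need. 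The paper's proof is devoted precisely to this point: it uses freeness to embed $\Omega_{A/\K}$ into the enveloping algebra $\U(A,\Omega_{A/\K})$ via $\iota_2$ (Lemma \ref{xxlem2.3}), applies Hochschild's relation $(\iota_2(x\d u))^p=\iota_1(x^p)(\iota_2(\d u))^p+\iota_2((x\d u)^{p-1}(x)\d u)$, and then computes $[\,(\iota_2(x\d u))^p,\iota_2(y\d v)\,]$ explicitly to land on $\iota_2\bigl([x^p\d\pp{u}+(x\d u)^{p-1}(x)\d u,\;y\d v]\bigr)$. Nothing in your proposal substitutes for this computation.

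Two secondary gaps. First, you propose to choose an $A$-basis of $\Omega_{A/\K}$ consisting of exact forms $\{\d u_\alpha\}$; a generating set of a free module need not contain a basis, so this choice is not available in general (and is not needed in the paper's argument, which proves the adjoint identity for arbitrary $x\d u$ and then cites the proof of Dokas's Proposition 2.2). Second, you correctly identify well-definedness of your extension as ``the principal obstacle'' but then do not resolve it --- you assert that the checks ``draw on'' \eqref{E0.1.1} and Jacobson's identity without carrying them out; these consistency checks are exactly where the hard identities of the type in Proposition \ref{xxpro3.3} and Lemma \ref{xxlem4.5} would enter, and the paper's route through the adjoint representation and Jacobson's theorem is designed to avoid them. (Incidentally, the paper's Theorem \ref{xxthm8.2} only assumes a \emph{weakly} restricted Poisson structure; the compatibility \eqref{E0.1.1} is never used, so your appeal to it is a sign the argument has not been thought through to the end.)
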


The category of restricted Poisson
algebras is a symmetric monoidal category. In
particular, the tensor product of two restricted Poisson
algebras is again a restricted Poisson
algebra [Proposition \ref{xxpro9.2}].
Advances of algebra are tremendously benefited from geometric viewpoint
and methods and vice versa. Restricted Poisson algebras are, to some
extent, the algebraic counterpart of symplectic differential geometry
in positive characteristic. Following this idea, restricted Poisson-Lie
groups should correspond to restricted Poisson Hopf algebras
which connects both Poisson geometry in positive characteristic
and quantum groups at the root of unity. Hence, it is meaningful to
introduce the notion of a restricted Poisson Hopf algebra, see
Definition \ref{xxdef9.3}. One natural example of such an algebra
is given in Example \ref{xxex9.4}.

The paper is organized as follows.  Sections 1 and 2 contain
basic definitions about restricted Lie algebras and Poisson
algebras. In Section 3, we re-introduce the notion of a
restricted Poisson algebra. In Sections 4 to 7, we give several
natural examples. In Section 8, we prove Theorem \ref{xxthm0.5}.
The notion of a restricted Poisson Hopf algebra is introduced
in Section 9. The Appendix contains a combinatorial proof of
\eqref{E7.2.1} which is needed for Example \ref{xxex7.2}.

\section{Restricted Lie algebras}
\label{xxsec1}

We give a short review about restricted Lie algebras.

Lie algebras over a field of positive
characteristic often admit an additional structure involving a
so-called $p$-map. The Lie algebra together with a $p$-map is called
a \emph{restricted Lie algebra}, which was first introduced and
systematically studied by Jacobson \cite{J1, J2}. Let $L:=(L, [-, -])$
be a Lie algebra over $\K$. For convenience, for each $x\in L$, we
denote by $\ad_x\colon L\to L$ the adjoint representation given by
$\ad_x(y)=[x, y]$ for all $y\in L$. We recall the definition of a
restricted Lie algebra from \cite[Section 1]{J1}. As always, we
assume that $\K$ is of positive characteristic $p\geq 3$.

\begin{definition} \cite{J1}
\label{xxdef1.1}
A \emph{restricted Lie algebra} $(L, (-)^{[p]})$ over $\K$ is a Lie
algebra $L$ over $\K$ together with a \emph{$p$-map}
$(-)^{[p]}: x\mapsto x^{[p]}$
such that the following conditions hold:
\begin{enumerate}
\item[(1)]
$\ad_x^p=\ad_{x^{[p]}}$ for all $x\in L$;
\item[(2)]
$(\la x)^{[p]}=\la^p x^{[p]}$ for all $\la\in \K, x\in L$;
\item[(3)]
$(x+y)^{[p]}=x^{[p]}+y^{[p]}+\Lambda_p(x, y)$, where
$\Lambda_p(x, y)=\sum\limits_{i=1}^{p-1}\frac{s_i(x, y)}{i}$
for all $x, y\in L$ and $s_i(x, y)$ is the coefficient of
$t^{i-1}$ in the formal expression $\ad_{t x+y}^{p-1}(x)$.
\end{enumerate}
\end{definition}

For simplicity of notation, we write all multiple Lie brackets
with the notation
\begin{equation}
\label{E1.1.1}\tag{E1.1.1}
[x_1, [x_2, \cdots, [x_{n-1}, x_n]\cdots ]]
=:[x_1, x_2, \cdots, x_{n-1}, x_n],
\end{equation}
for $x_1, \cdots, x_n\in L$. Clearly,
$\ad_x^i(y)=[\underbrace{x, \cdots, x}_{i\ {\rm copies}}, y]$
for every $i$. Under this notation, we have
\begin{equation}
\label{E1.1.2}\tag{E1.1.2}
s_i(x, y)=\sum_{{x_k=x\ {\rm or}\ y}\atop{\#\{k\mid x_k=x\}=i-1}}
[x_1, \cdots, x_{p-2}, y, x],
\end{equation}
and, hence
\begin{equation}
\label{E1.1.3}\tag{E1.1.3}
\Lambda_p(x, y)=\sum_{{x_k=x\ {\rm or}\ y}\atop{x_{p-1}=y, x_p=x}}
\frac{1}{\#(x)} [x_1, \cdots, x_{p-1}, x_p].
\end{equation}

Note that $\Lambda_p(x,y)$ is denoted by $L(x,y)$ in \cite{BK}
and denoted by $\sigma(x,y)$ in \cite{Ho2}. Another way of
understanding $\Lambda_p(x, y)$ is to use the universal enveloping
algebra $\U(L)$ of the Lie algebra $L$. By \cite[Condition (3) on p. 559]{Ho2},
\begin{equation}
\label{E1.1.4}\tag{E1.1.4}
\Lambda_p(x, y)=(x+y)^{p}-x^{p}-y^{p}
\end{equation}
for all $x, y\in L\subset \U(L)$, where $(-)^{p}$ is the
multiplicative $p$-th power in $\U(L)$.

We give a well-known example which will be used later.

\begin{example}
\label{xxex1.2}
Let $A$ be an associative algebra over $\K$.
We denote by $A_L$ the induced Lie algebra with the bracket given
by $[x, y]:=xy-yx$, for all $x, y\in A$. Then $(A_L,(-)^p)$ is a
restricted Lie algebra, where $(-)^p$ is the Frobenius map given
by $x\mapsto x^p$.
\end{example}

In \cite[Theorem 11]{J2}, Jacobson gives a necessary and sufficient condition
in which an ordinary Lie algebra over $\K$ is restricted.

\begin{lemma}\cite[Theorem 11]{J2}
\label{xxlem1.3}
Let $L$ be a Lie algebra with a $\K$-basis $\{x_i\}_{i\in I}$
for some index set $I$.
Suppose that there exists an element $\gamma(x_i)\in L$ for
each $i\in I$ such that
$$\ad_{x_i}^p=\ad_{\gamma(x_i)}.$$
Then there exists a unique restricted structure on $L$ such that
$x_i^{[p]}=\gamma(x_i)$ for all $i\in I$.
\end{lemma}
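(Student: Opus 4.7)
The plan is to handle uniqueness and existence separately, with the existence argument routed through the universal enveloping algebra $\U(L)$.

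Uniqueness is immediate from axioms (2) and (3) of Definition \ref{xxdef1.1}: for $x=\sum_i \lambda_i x_i$ (finite sum), iterated application of (3) together with (2) expresses $x^{[p]}$ as $\sum_i \lambda_i^p x_i^{[p]}$ plus a fixed combination of iterated Lie brackets coming from the $\Lambda_p$ corrections, so any $p$-map is determined by its values on a basis.

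For existence, the key observation is that the hypothesis $\ad_{x_i}^p=\ad_{\gamma(x_i)}$ (an equality of operators on $L$) upgrades to an equality of derivations of $\U(L)$: both $\ad_{x_i}^p$ and $\ad_{\gamma(x_i)}$ are derivations of $\U(L)$ (the former because the $p$-th power of a derivation is again a derivation in characteristic $p$), and they agree on the generating subspace $L$. Combined with Example \ref{xxex1.2} applied to $\U(L)$, this gives $\ad_{x_i^p}=\ad_{\gamma(x_i)}$ on $\U(L)$, so
\[
z_i := x_i^p - \gamma(x_i)\in Z(\U(L)) \quad \text{for every }i\in I.
\]
For $x=\sum_i\lambda_i x_i$ I would then set $c_x:=\sum_i\lambda_i^p z_i \in Z(\U(L))$ and $x^{[p]}:=x^p-c_x\in\U(L)$. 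Because $(\lambda+\mu)^p=\lambda^p+\mu^p$ in characteristic $p$, the map $x\mapsto c_x$ is additive and $p$-semilinear.

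Three things then need to be verified: (a) $x^{[p]}\in L$; (b) axioms (2) and (3); (c) axiom (1). For (a), induction on $|{\rm supp}(x)|$ using \eqref{E1.1.4} yields $x^p=\sum_i \lambda_i^p x_i^p + \ell(x)$ with $\ell(x)\in L$, whence $x^{[p]}=\sum_i\lambda_i^p\gamma(x_i)+\ell(x)\in L$. For (b), combining \eqref{E1.1.4} with the additivity and $p$-semilinearity of $c$ gives $(x+y)^{[p]}=x^{[p]}+y^{[p]}+\Lambda_p(x,y)$ and $(\lambda x)^{[p]}=\lambda^p x^{[p]}$ directly. For (c), centrality of $c_x$ together with Example \ref{xxex1.2} applied to $\U(L)$ gives $\ad_{x^{[p]}}=\ad_{x^p-c_x}=\ad_{x^p}=\ad_x^p$ on $\U(L)$, and restricting to $L$ yields axiom (1). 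The main obstacle I expect is step (a): organizing the bookkeeping for the iterated $\Lambda_p$-brackets produced by repeated application of \eqref{E1.1.4}. Once (a) is in hand, the rest falls out cleanly from the centrality of each $z_i$ and the additivity of $c$.
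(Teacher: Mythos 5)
The paper gives no proof of this lemma---it is quoted directly from Jacobson \cite[Theorem 11]{J2}---and your argument is correct and is essentially Jacobson's classical one: pass to $\U(L)$, observe that $z_i=x_i^p-\gamma(x_i)$ is central because $\ad_{x_i}^p$ and $\ad_{\gamma(x_i)}$ are derivations of $\U(L)$ agreeing on the generating subspace $L$ while $\ad_{x_i}^p=\ad_{x_i^p}$ there, and then correct the associative $p$-th power by the $p$-semilinear central element $c_x$. All the verifications you outline, including the bookkeeping in step (a) via iterated use of \eqref{E1.1.4} and the PBW embedding $L\subset\U(L)$, go through as stated.
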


\section{Poisson algebras and their enveloping algebras}
\label{xxsec2}

In this section we recall some definitions.
We refer to \cite{L-GPV} for some basics concerning Poisson algebras.

\begin{definition}
\cite[Definition 1.1]{L-GPV}
\label{xxdef2.1}
Let $A$ be a commutative algebra over $\K$. A {\it Poisson structure} on $A$
is a Lie bracket $\{-, -\}\colon A\ot A\to A$ such that the following
Leibniz rule holds
\begin{equation}
\label{E2.1.1}\tag{E2.1.1}
\{xy, z\}=x\{y, z\}+y\{x, z\}, \quad \forall \; x, y, z\in A.
\end{equation}
The algebra $A$ together with a Poisson structure is called a
\emph{Poisson algebra}.
\end{definition}

The Lie bracket $\{-, -\}$ (which replaces $[-,-]$ in the previous section)
is called the {\it Poisson bracket}, and the associative multiplication of
$A$ is sometimes denoted by $\cdot$. In this paper all Poisson algebras are
commutative as an associative algebra.

Recall that the K\" ahler differentials, denoted by $\Omega_{A/\K}$,  of a
commutative algebra $A$ over $\K$ is an $A$-module generated by
elements (or symbols) $\d x$ for all $x\in A$, and subject to the relations
\begin{align*}
\d(x+y)=\d x+\d y, \ \ \ \d(xy)=x\d y+y\d x,\ \ \  \d \la=0,
\end{align*}
where $x, y\in A, \la\in \K\subseteq A$.
When $(A,\{-,-\})$ is a Poisson algebra, the K\"ahler differentials
$\Omega_{A/\K}$ admits a Lie algebra structure with Lie bracket given by
\[ [x\d u, y\d v]=x\{u, y\} \d v+y\{x, v\} \d u+xy\d \{u, v\}\]
for all $x\d u, y\d v\in \Omega_{A/\K}$.
Moreover, $A$ is also a Lie module over $\Omega_{A/\K}$ with the action given
by
$(x\d u).a=x\{u, a\}$ for all $x\d u\in \Omega_{A/\K}, a\in A$.
In fact, the pair $(A, \Omega_{A/\K})$ is a Lie-Rinehart algebra
in the following sense.

\begin{definition}\cite[Definition 1.5]{Do}
\label{xxdef2.2}
A \emph{Lie-Rinehart algebra} over $A$ is a pair $(A, L)$, where
$A$ is a commutative associative algebra
over $\K$, $L$ is a Lie algebra equipped with the structure of an
$A$-module together with a map called \emph{anchor}
$$\al\colon L\to \Der_\K(A)$$
which is both an $A$-module and a Lie algebra homomorphism such that
\begin{equation}
\label{E2.2.1}\tag{E2.2.1}
[X, aY]=a[X, Y]+\al(X)(a)Y
\end{equation}
for all $a\in A$ and $X, Y\in L$.
\end{definition}

Note that, in the situation of Poisson algebra, the
anchor map $\alpha: \Omega_{A/\K} \to \Der(A)$ is given by
\begin{equation}
\label{E2.2.2}\tag{E2.2.2}
\al(x\d u)(z)=x\{u, z\}
\end{equation}
for all $x\d u\in \Omega_{A/\K}$ and $z\in A$.

Let $(A, L)$ be a Lie-Rinehart algebra. In \cite{Ri}, Rinehart
introduced the notion of universal enveloping algebra $\U(A, L)$
of $(A,L)$, which is an associative $\Bbbk$-algebra satisfying
the appropriate universal property, see \cite{Hu1} for more
details. We recall the definition next.

Denote by $A\rtimes L$ the semi-direct product of the Lie algebra
$L$ and the $L$-module $A$. More precisely, $A\rtimes L$ is the
direct sum of $A$ and $L$ as a vector space, and the Lie bracket
is given by
$$[(a, X), (b, Y)]=(X(b)-Y(a), [X, Y])$$
for all $(a, X), (b, Y)\in A\rtimes L$.
Let $(\U(A\rtimes L), \iota)$ be the universal enveloping algebra
of the Lie algebra $A\rtimes L$, where
$\iota\colon A\rtimes L \to \U(A\rtimes L)$ is the canonical embedding.
We consider the subalgebra $\U^+(A\rtimes L)$ (without unit)
generated by $A\rtimes L$.
Moreover, $A\rtimes L$ has the structure of an $A$-module via
$a(a', X)=(aa', aX)$ for all $a, a'\in A$ and $X\in L$.
The (universal) enveloping algebra $\U(A, L)$ associated to the
Lie-Rinehart algebra $(A,L)$ is defined to be the quotient
\[\U(A, L)=
\frac{\U^+(A\rtimes L)}{(\iota((a, 0))\iota((a', X))-\iota(a(a', X)))}.\]
Note that $(1_A,0)$ becomes the algebra identity of $\U(A, L)$.
There are two canonical maps
$$\iota_1\colon A\to \U(A, L), a\mapsto (a, 0)\quad {\text{ and}}\quad
\iota_2\colon L\to \U(A, L), X\mapsto (0, X).$$
Observe that $\iota_1$ is an algebra homomorphism and $\iota_2$
is a Lie algebra homomorphism. Moreover, we have the following relations
\[\iota_1(a)\iota_2(X)=\iota_2(aX),
\ {\rm and}\ \ [\iota_2(X), \iota_1(a)]=\iota_1(X(a))\]
for all $a\in A$ and $X\in L$.

As a consequence of \cite[Theorem 3.1]{Ri}, we have the following.

\begin{lemma}\label{xxlem2.3}
Let $(A,L)$ be a Lie-Rinehart algebra and $\U(A,L)$ the enveloping
algebra of $(A,L)$. If $L$ is a projective $A$-module, then the
Lie algebra homomorphism $\iota_2\colon L \to \U(A,L)$ is injective.
\end{lemma}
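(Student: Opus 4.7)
The plan is to deduce injectivity of $\iota_2$ from Rinehart's Poincar\'e--Birkhoff--Witt theorem \cite[Theorem 3.1]{Ri}, which identifies the associated graded of $\U(A,L)$ with the symmetric algebra $S_A(L)$ when $L$ is $A$-projective. Once this identification is in hand, $\iota_2$ factors through the canonical inclusion $L \hookrightarrow S_A(L)$ via the symbol map, and injectivity is then immediate.

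To carry this out, I would first set up the standard filtration $F_0 \subseteq F_1 \subseteq \cdots$ on $\U(A,L)$, where $F_0 = \iota_1(A)$ and $F_n$ for $n\geq 1$ is the $A$-subbimodule generated by products $\iota_2(X_1)\cdots\iota_2(X_k)$ with $k \leq n$. The defining relations $\iota_1(a)\iota_2(X) = \iota_2(aX)$ and $[\iota_2(X),\iota_1(a)] = \iota_1(X(a))$, together with $[\iota_2(X),\iota_2(Y)] = \iota_2([X,Y])$, ensure that each $F_n$ is a well-defined $A$-bimodule and that the associated graded $\gr \U(A,L) = \bigoplus_{n\geq 0} F_n/F_{n-1}$ is a commutative graded $A$-algebra. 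The universal property of the symmetric algebra then yields a surjection $\pi \colon S_A(L) \twoheadrightarrow \gr \U(A,L)$ of graded $A$-algebras, sending $X \in L$ to the class of $\iota_2(X)$ in $F_1/F_0$.

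Invoking \cite[Theorem 3.1]{Ri} under the projectivity hypothesis, $\pi$ is an isomorphism. Its degree-one component gives an isomorphism $L \xrightarrow{\sim} F_1/F_0$, which factors as $L \xrightarrow{\iota_2} F_1 \twoheadrightarrow F_1/F_0$; therefore $\iota_2$ must itself be injective. It is also a Lie algebra homomorphism by construction, so the claim follows.

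The only nontrivial content, and hence the sole obstacle, is Rinehart's PBW theorem itself; its proof in \cite{Ri} constructs an explicit splitting $\gr \U(A,L) \to S_A(L)$ using a connection-type map on $L$, whose existence is guaranteed precisely by the projectivity hypothesis. I treat that result as a black box here, since the lemma is literally extracted from it.
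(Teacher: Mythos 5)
Your argument is correct and is essentially the route the paper takes: the paper offers no proof at all, simply stating the lemma as a consequence of \cite[Theorem 3.1]{Ri}, which is exactly the PBW isomorphism $S_A(L)\cong \gr \U(A,L)$ you invoke. Your elaboration of how injectivity of $\iota_2$ follows from the degree-one component of that isomorphism is the standard and intended deduction.
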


It is worth spending half page to re-state the above construction
for Poisson algebras since it is needed later. Denote by
$A\rtimes \Omega_{A/\K}$ the semidirect product of $A$ and $\Omega_{A/\K}$
with the Lie bracket given by
\[[(a, x\d u), (b, y\d v)]=(x\{u, b\}-y\{v, a\},
x\{u, y\} \d v+y\{x, v\} \d u+xy\d \{u, v\})\]
for $(a, x\d u), (b, y\d v)\in A\rtimes \Omega_{A/\K}$.
The Poisson enveloping algebra of $A$, denoted by $\P(A)$
(which is a new notation), is defined to be the enveloping
algebra of the Lie-Rinehart algebra $(A, \Omega_{A/\K})$, which
can be realized as an associated algebra
$$\P(A):=\U(A,\Omega_{A/\K})=\U^{+}(A\rtimes \Omega_{A/\K})/J,$$
where $\U(A\rtimes \Omega_{A/\K})$ is
the universal enveloping algebra of the Lie algebra $A\rtimes \Omega_{A/\K}$,
and $J$ is the ideal generated by
\begin{equation}
\label{E2.3.1}\tag{E2.3.1}
(a, 0)(b, x\d u)-(ab, ax\d u)
\end{equation}
for all
$a,b\in A, x\d u\in \Omega_{A/\K}$ \cite{MM, Ri}.
Here we have two maps
\[\iota_1\colon A\to A\rtimes \Omega_{A/\K} \to \P(A), \qquad \iota_1(a)=(a, 0)\]
and
\[\iota_2\colon \Omega_{A/\K} \to A\rtimes \Omega_{A/\K} \to \P(A),
\qquad \iota_2(x\d u)=(0, x\d u).\]
Then $\iota_1$ and $\iota_2$ are homomorphisms of associative algebras and
Lie algebras, respectively.
Moreover, we have
\begin{align}
\label{E2.3.2}\tag{E2.3.2}
&\iota_1(\{x, y\}) =[\iota_2(\d x), \iota_1(y)], \\
\label{E2.3.3}\tag{E2.3.3}
&\iota_2(\d(xy)) = \iota_1(x)\iota_2(\d y)+\iota_1(y)\iota_2(\d x)
\end{align}
for all $x, y\in A$.

If $\Omega_{A/\K}$ is a projective $A$-module,
then the canonical map $\iota_2\colon \Omega_{A/\K}\to \P(A)$ is injective
[Lemma \ref{xxlem2.3}].  It follows that
$\Omega_{A/\K}$ can be seen as a Lie subalgebra of $\P(A)$.

We now recall the definition of a free Poisson algebra, see
\cite[Section 3]{Sh}. Let $V$ be $\K$-vector space. Let $Lie(V)$ be the free
Lie algebra generated by $V$. The \emph{free Poisson algebra generated by $V$},
denoted by $FP(V)$, is the symmetric algebra over $Lie(V)$, namely
\begin{equation}
\label{E2.3.4}\tag{E2.3.4}
FP(V)=\K[Lie(V)].
\end{equation}

The following universal property is well-known \cite[Lemma 1, p. 312]{Sh}.

\begin{lemma}
\label{xxlem2.4}
Let $A$ be a Poisson algebra and $V$ be a vector space. Every
$\K$-linear map $g: V\to A$ extends uniquely to a Poisson algebra
morphism $G: FP(V)\to A$ such that $g$ factors through
$G$.
\end{lemma}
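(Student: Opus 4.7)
The plan is to build $G$ by iterating two standard universal properties and then verify Poisson-bracket compatibility through a short Leibniz induction.

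First I would apply the universal property of the free Lie algebra. Viewing $A$ as a Lie algebra under its Poisson bracket, the $\K$-linear map $g\colon V\to A$ lifts to a unique Lie algebra homomorphism $\tilde g\colon Lie(V)\to (A,\{-,-\})$. Next I would apply the universal property of the symmetric (polynomial) algebra. Regarding $A$ now as a commutative $\K$-algebra, $\tilde g$ extends uniquely to a commutative algebra homomorphism
\[ G\colon FP(V)=\K[Lie(V)]\longrightarrow A, \]
and by construction the restriction of $G$ to $V\subset Lie(V)$ equals $g$.

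The substantive step is checking that $G$ respects brackets, i.e.\ $G(\{f,h\})=\{G(f),G(h)\}$ for all $f,h\in FP(V)$. On generators $x,y\in Lie(V)$ the Poisson bracket $\{x,y\}_{FP(V)}$ coincides with the Lie bracket $[x,y]$ of $Lie(V)$, and since $\tilde g$ is a Lie homomorphism one has
\[ G(\{x,y\})=\tilde g([x,y])=\{\tilde g(x),\tilde g(y)\}=\{G(x),G(y)\}. \]
To propagate this identity to arbitrary $f,h$ I would fix $h\in Lie(V)$ and induct on the polynomial degree of $f$ in the generators $Lie(V)$, splitting $f=f_1 f_2$ and applying the Leibniz rule \eqref{E2.1.1} simultaneously in $FP(V)$ and in $A$ together with the multiplicativity of $G$. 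A second, symmetric induction on the degree of $h$ (or an interchange of roles using antisymmetry of the bracket) then finishes the verification.

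Uniqueness is automatic from the two universal properties used above: any Poisson morphism $G'\colon FP(V)\to A$ extending $g$ restricts to a Lie homomorphism $Lie(V)\to A$ extending $g$, hence agrees with $\tilde g$ on $Lie(V)$; and then $G'$ is a commutative algebra map on $\K[Lie(V)]$ extending $\tilde g$, so $G'=G$. The only non-formal ingredient is the Leibniz induction of the previous paragraph; this is routine bookkeeping and I do not expect any genuine obstacle.
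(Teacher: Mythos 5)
Your proof is correct. The paper does not actually supply a proof of this lemma---it records it as well-known and cites \cite[Lemma 1, p.\ 312]{Sh}---but your argument (lift $g$ through the universal property of $Lie(V)$, extend to $\K[Lie(V)]$ by the universal property of the symmetric algebra, verify bracket compatibility by Leibniz induction on monomial degree, and deduce uniqueness from the same two universal properties) is the standard proof and is exactly what the cited construction of $FP(V)=\K[Lie(V)]$ amounts to.
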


In \cite[Section 3]{Sh}, the notion of a free Poisson algebra is defined
by the universal property stated in Lemma \ref{xxlem2.4}, and then
Shestakov proved that the free Poisson algebra can be constructed by using
\eqref{E2.3.4} \cite[Lemma 1, p. 312]{Sh}. In \cite{Sh}, Shestakov
also considered the super (or ${\mathbb Z}_2$-graded) version of Poisson
algebras.

For each associative commutative algebra $A$, let $A^p$
denote the subalgebra generated by $\{f^p\mid f\in A\}$.
The free Poisson algebras have the following special property.

\begin{lemma}
\label{xxlem2.5} Let $A$ be a free Poisson algebra $FP(V)$.
\begin{enumerate}
\item[(1)]
$\Omega_{A/\K}$ is a free module over $A$. As a consequence,
the Lie algebra map $\iota_2\colon \Omega_{A/\K}\to \P(A)$ is injective.
\item[(2)]
The kernel of $\d: A\to \Omega_{A/\K}$ is $A^p$.
\end{enumerate}
\end{lemma}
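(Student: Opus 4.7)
The plan is to exploit the presentation $A = FP(V) = \K[Lie(V)]$ from \eqref{E2.3.4}, which identifies $A$ with the polynomial ring $\K[y_\alpha \mid \alpha \in J]$ on any fixed $\K$-basis $\{y_\alpha\}_{\alpha \in J}$ of the free Lie algebra $Lie(V)$. Once this reduction is made, both claims reduce to standard facts about polynomial algebras in characteristic $p$.

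For part (1), I would invoke the well-known computation of the Kähler differentials of a polynomial algebra,
$$\Omega_{A/\K} \;=\; \bigoplus_{\alpha\in J} A\cdot \d y_\alpha,$$
which exhibits $\Omega_{A/\K}$ as a free (hence projective) $A$-module with basis $\{\d y_\alpha\}_{\alpha\in J}$. Applying Lemma \ref{xxlem2.3} to the Lie-Rinehart algebra $(A,\Omega_{A/\K})$ immediately gives that the canonical map $\iota_2\colon \Omega_{A/\K}\to \P(A)$ is injective.

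For part (2), I would work in the polynomial model. Writing $\d f = \sum_\alpha \pian{f}{y_\alpha}\,\d y_\alpha$ and using the freeness established in (1), one sees that $\d f = 0$ if and only if every partial derivative $\pian{f}{y_\alpha}$ vanishes. A standard characteristic-$p$ argument (expand $f$ as a polynomial in some finite subset of the $y_\alpha$ and check the vanishing monomial by monomial) shows that this is equivalent to $f$ lying in the $\K$-subalgebra $\K[y_\alpha^p \mid \alpha\in J]$. It then remains to identify this subalgebra with $A^p$: the inclusion $\K[y_\alpha^p]\subseteq A^p$ is obvious from the definition, while the reverse inclusion follows by expanding an arbitrary $p$-th power $(\sum c_\beta y^\beta)^p = \sum c_\beta^p y^{p\beta}$ using the Frobenius identities $(f+g)^p=f^p+g^p$ and $(fg)^p=f^pg^p$, and closing under $\K$-linear combinations and products.

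There is no real obstacle; the only subtlety worth flagging is the case in which $\K$ is not perfect, where one must remember that $A^p$ was defined as the \emph{$\K$-subalgebra} generated by all $p$-th powers, so it contains every scalar $\lambda\in \K$ (not merely the $p$-th powers $\lambda^p$). This is exactly what is needed to match $A^p$ with $\K[y_\alpha^p\mid \alpha\in J]$ on the nose.
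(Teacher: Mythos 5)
Your proposal is correct and follows exactly the route the paper intends: identify $FP(V)=\K[Lie(V)]$ with a polynomial ring on a basis of $Lie(V)$, cite the standard freeness of K\"ahler differentials of a polynomial ring together with Lemma \ref{xxlem2.3} for part (1), and the standard characteristic-$p$ computation of $\Ker \d$ for part (2), which the paper dismisses with ``check directly.'' Your filled-in details, including the remark about $A^p$ being the $\K$-subalgebra generated by $p$-th powers (so that it contains all scalars even over an imperfect field), are accurate.
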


\begin{proof}
(1) Since $A$ is a commutative polynomial ring,
$\Omega_{A/\K}$ is free over $A$. (The proof is omitted). The consequence
follows from Lemma \ref{xxlem2.3}.

(2) Check directly.
%
%
%
\end{proof}

Let $V$ be a $\K$-vector space. There are two gradings that
can naturally
be assigned to $FP(V)$. The first one is determined by
$$\deg_1 (x)=1, \quad \forall \; 0\neq x\in Lie(V).$$
Since $FP(V)$ is the symmetric algebra associated to $Lie(V)$,
the above extends to an $\N$-grading on $FP(V)$. Since the Lie bracket
$\{-,-\}$ has degree $-1$, the Poisson bracket on $FP(V)$ has degree
$-1$. Note that the multiplication on $FP(V)$ is homogeneous with
respect to $\deg_1$.

For the second grading, we assume that
$$\deg_2 (x)=1, \quad \forall \; 0\neq x\in V$$
and make the free Lie algebra $Lie(V)$ $\N$-graded (namely, $[-,-]$
is homogeneous of degree zero). Then we extend the
$\N$-grading to $FP(V)$ so that both the Poisson bracket
and the multiplication are homogeneous of degree zero.

Let $\{v_i\}_{i\in I}$ be a $\K$-basis of $V$ and
$\{x_j\}_{j\in J}$ a $\K$-basis of $Lie(V)$.
Let $A$ be the free Poisson algebra $FP(V)$ and let $A^{c}$ be
the $A^p$-submodule of $A$ generated by monomials $x_1^{i_1}
\cdots x_n^{i_n}$, for $x_1,\cdots,x_n\in Lie(V)$, which are
not in $A^p$.

Recall that
\begin{equation}
\label{E2.5.1}
\tag{E2.5.1}
\{f_1,f_2,\cdots, f_n\}:=
\{f_1, \{f_2,\cdots,\{f_{n-1},f_{n}\}\}
\end{equation}
for all $f_i\in A$.

\begin{lemma}
\label{xxlem2.6}
Let $A$ be a free Poisson algebra $FP(V)$.
\begin{enumerate}
\item[(1)]
Let $f_1,\cdots, f_n$ be polynomials in $v_i$
{\rm{(}}not $x_i${\rm{)}}. If $p$ does not divide $n-1$,
then $\{f_1,f_2,\cdots, f_n\}\in A^c$.
\item[(2)]
Let $f,g$ be polynomials in $v_i$. Then
$\Lambda_p(f,g)\in A^c$.
\item[(3)]
The following elements are in $A^c$ for any polynomials
in $f,g,h$ in $v_i$:
\begin{enumerate}
\item[(a)]
$\Lambda_{p}(f,g), \Lambda_p(f^2, g^2),\Lambda_p(f^2+g^2, 2fg).$
\item[(b)]
$\Lambda_{p}(fg,h), \Lambda_p((fg)^2, h^2),\Lambda_p((fg)^2+h^2, 2fgh).$
\item[(c)]
$\Lambda_p(fg,fh)$.
\end{enumerate}
\end{enumerate}
\end{lemma}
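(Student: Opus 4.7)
The plan is to exploit the two gradings $\deg_1$ and $\deg_2$ on $A = FP(V)$, together with the decomposition $A = A^p \oplus A^c$ as $A^p$-modules, where $A^p$ is $\K$-spanned by monomials $\prod_j x_j^{b_j}$ with every exponent $b_j$ divisible by $p$ and $A^c$ is spanned by the remaining monomials. The key observation is that for any such monomial, $\deg_2\bigl(\prod_j x_j^{b_j}\bigr)-\deg_1\bigl(\prod_j x_j^{b_j}\bigr) = \sum_j b_j(\deg_2(x_j)-1)$, and the summand $\deg_2(x_j)-1$ vanishes precisely when $x_j \in V$. Consequently, if this quantity is not divisible by $p$, then some exponent $b_j$ with $\deg_2(x_j)\geq 2$ must fail to be a multiple of $p$, forcing the monomial into $A^c$.

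For (1), by multilinearity of the Poisson bracket I reduce to the case where each $f_i$ is $\deg_2$-homogeneous of some degree $d_i$. Because $f_i$ is a polynomial in the $v_i$'s, one also has $\deg_1(f_i)=d_i$. Using that $\{-,-\}$ has $\deg_1$-degree $-1$ and $\deg_2$-degree $0$, the iterated bracket $\{f_1,\dots,f_n\}$ is bihomogeneous of $\deg_2$-degree $D:=\sum_i d_i$ and $\deg_1$-degree $D-(n-1)$. Every monomial $\prod_j x_j^{b_j}$ appearing in its expansion therefore satisfies $\sum_j b_j(\deg_2(x_j)-1)=n-1$. When $p\nmid n-1$, the observation above places each such monomial in $A^c$, and hence $\{f_1,\dots,f_n\}\in A^c$.

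Part (2) is immediate from (1) via \eqref{E1.1.3}: $\Lambda_p(f,g)$ is a $\K$-linear combination of $p$-fold Poisson brackets $\{x_1,\dots,x_{p-1},x_p\}$ with $x_k\in\{f,g\}$. Each such summand has $n=p$ arguments and $p\nmid p-1$, so (1) places it in $A^c$; therefore $\Lambda_p(f,g)\in A^c$. For (3), in each of the listed cases the arguments of $\Lambda_p$ are again polynomials in the $v_i$'s (for instance $f^2$ and $g^2$, or $(fg)^2+h^2$ and $2fgh$, or $fg$ and $fh$), so (2) applies verbatim. The principal technical point, and the only place where the hypothesis that each $f_i$ is a polynomial in the $v_i$'s rather than in the general $x_j$'s is used, is the identification $\deg_1(f_i)=\deg_2(f_i)$ in the second paragraph; once this is in place, the invariant $n-1$ emerges naturally and the remainder of the argument is essentially formal.
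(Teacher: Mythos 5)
Your proof is correct and takes essentially the same route as the paper's: both arguments exploit the two gradings $\deg_1,\deg_2$ on $FP(V)$, the relation $\deg_1 F=\deg_2 F-(n-1)$ for the iterated bracket, and the fact that a monomial lying in $A^p$ must have both degrees divisible by $p$, then deduce (2) from (E1.1.3) and (3) as a special case of (2). Your monomial-level bookkeeping via $\sum_j b_j(\deg_2(x_j)-1)=n-1$ is just a more explicit rendering of the paper's one-line observation that $p$ cannot divide both $\deg_1 F$ and $\deg_2 F$.
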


\begin{proof} (1) By linearity, we may assume that
all $f_s$ are monomials in $\{v_i\}\subseteq V$.
Then $\deg_1 f_s=\deg_2 f_s$ for $s=1,\cdots,n$.
Let $F:=\{f_1,f_2,\cdots, f_n\}$. Then
$$\deg_1 F=-n+1+\deg_2 F.$$
Since $p$ does not divide $n-1$, $p$ can not divide both
$\deg_1 F$ and $\deg_2 F$. This implies that $F\in A^c$.

(2) Note that $\Lambda_p(f,g)$ is a linear combination of
terms of the form \eqref{E2.5.1} when $n=p$ and
$f_i=f$ or $g$. By part (1), $\Lambda_p(f,g)\in A^c$.

(3) This is a special case of part (2) for different
choices of $f,g$.
\end{proof}

\section{Restricted Poisson algebras, Definition}
\label{xxsec3}

In this section we present a formulation of a restricted Poisson
algebra that is equivalent to \cite[Definition 1.8]{BK}.

Inspired by the notion of a restricted Lie algebra, we first introduce
the definition of a weakly restricted Poisson structure
over a field $\K$ of characteristic $p\geq 3$.

\begin{definition}
\label{xxdef3.1}
Let $(A, \cdot, \{-,-\})$ be a Poisson algebra. If $A$ admits
a $p$-map $\pp{(-)}\colon A\to A$ such that $(A, \{-, -\}, \pp{(-)})$
is a restricted Lie algebra, then $A$ is called a
\emph{weakly restricted Poisson algebra}.
\end{definition}

This definition requires no compatibility condition between
the $p$-map $\pp{(-)}$ and the multiplication $\cdot$.  We will see that
an additional requirement is very natural from a Lie algebraic
point of view.

\begin{lemma}
\label{xxlem3.2}
Let $(A, \cdot, \{-,-\})$ be a Poisson algebra and let $x, y\in A$.
\begin{enumerate}
\item[(1)]
If there exists $\widetilde{x}$ and $\widetilde{y}$ in $A$ such
that $\ad_x^p=\ad_{\widetilde x}$ and $\ad_y^p=\ad_{\widetilde y}$, then
$$\ad_{xy}^p=\ad_{x^p\widetilde{y}+y^p\widetilde{x}+\Phi_p(x, y)},$$
 where
\begin{equation}
\label{E3.2.1}\tag{E3.2.1}
\Phi_p(x, y)=(x^p+y^p)\Lambda_p(x, y)-
\frac{1}{2}(\Lambda_p(x^2, y^2)+\Lambda_p(x^2+y^2, 2xy)).
\end{equation}
In particular, $\ad_{x^2}^p=\ad_{2x^p\widetilde{x}}$.
\item[(2)]
If $(A, \cdot, \{-,-\})$ is a weakly restricted Poisson algebra,
then
\begin{equation}
\label{E3.2.2}\tag{E3.2.2}
\ad_{\pp{(xy)}}=\ad_{x^p\pp{y}+y^p\pp{x}+\Phi_p(x, y)}.
\end{equation}
In particular,
\begin{equation}
\label{E3.2.3}\tag{E3.2.3}
\ad_{\pp{(x^2)}}=\ad_{2x^p\pp{x}}.
\end{equation}
\end{enumerate}
\end{lemma}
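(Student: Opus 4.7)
My plan is to prove Part (1); Part (2) then follows immediately, since in a weakly restricted Poisson algebra $\ad_{\pp{u}}=\ad_u^p$, so $\pp{x},\pp{y}$ play the role of $\widetilde{x},\widetilde{y}$ and \eqref{E3.2.2}--\eqref{E3.2.3} are Part (1) specialised.

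I first dispose of the ``in particular'' statement $\ad_{x^2}^p=\ad_{2x^p\widetilde{x}}$ directly. Let $m_x$ denote multiplication by $x$ on $A$ and set $D:=m_x\circ\ad_x\in{\rm End}(A)$, so that $\ad_{x^2}=2D$ by the Leibniz rule. Since $\{x,x\}=0$ we have $D(x)=0$. The standard identity
\[
(m_x\partial)^p=m_{x^p}\partial^p+m_{(m_x\partial)^{p-1}(x)}\,\partial,
\]
valid for any derivation $\partial$ of a commutative algebra in characteristic $p$ and applied here with $\partial=\ad_x$, therefore collapses to $D^p=m_{x^p}\ad_x^p=m_{x^p}\ad_{\widetilde{x}}$. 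Since $\ad_{x^p}=0$ (because $\{x^p,-\}=p\,x^{p-1}\{x,-\}=0$), a direct check gives $\ad_{2x^p\widetilde{x}}=2m_{x^p}\ad_{\widetilde{x}}$, hence $\ad_{x^2}^p=2^p D^p=2m_{x^p}\ad_{\widetilde{x}}=\ad_{2x^p\widetilde{x}}$.

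For the general case I polarise via $2xy=(x+y)^2-x^2-y^2$. Since $\ad$ is a Lie algebra homomorphism,
\[
\ad_{2xy}^p=\bigl(\ad_{(x+y)^2}-\ad_{x^2}-\ad_{y^2}\bigr)^p
\]
inside the restricted Lie algebra ${\rm End}(A)$ of Example \ref{xxex1.2}. I expand the right-hand side by iterating Jacobson's formula $(E+F)^p=E^p+F^p+\Lambda_p(E,F)$, substitute the ``in particular'' case into each $\ad_{z^2}^p$, use the Frobenius identity $(x+y)^p=x^p+y^p$ in $A$, and use $\ad_{x+y}^p=\ad_{\widetilde{x}+\widetilde{y}+\Lambda_p(x,y)}$ (which holds because $\Lambda_p(\ad_x,\ad_y)=\ad_{\Lambda_p(x,y)}$: the Lie homomorphism $\ad$ preserves Lie polynomials). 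Dividing by $2$ (via $2^p=2$ in $\F_p$) the ``linear'' part of the expansion delivers precisely $x^p\widetilde{y}+y^p\widetilde{x}+(x^p+y^p)\Lambda_p(x,y)$, accounting for the first two summands of the target together with the $(x^p+y^p)\Lambda_p(x,y)$ piece of $\Phi_p(x,y)$.

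The two residual correction terms from Jacobson's formula are $\tfrac12\Lambda_p((x+y)^2,-x^2)$ and $\tfrac12\Lambda_p(2xy+y^2,-y^2)$; to conclude I must show their sum is congruent, modulo $\Ker\ad$, to $-\tfrac12[\Lambda_p(x^2,y^2)+\Lambda_p(x^2+y^2,2xy)]$. Applying $\ad_{\Lambda_p(u,v)}=\ad_{u+v}^p-\ad_u^p-\ad_v^p$ (again Jacobson plus the Lie-homomorphism property of $\ad$) rewrites each of the four $\Lambda_p$ terms as an alternating sum of three $\ad_\cdot^p$ operators, and a short bookkeeping check confirms that the six resulting $\ad^p$ summands---indexed by $(x+y)^2$, $x^2$, $y^2$, $x^2+y^2$, $2xy+y^2$, $2xy$---cancel pairwise. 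This telescoping cancellation is the main (and only genuinely combinatorial) obstacle; the remainder is routine assembly of Jacobson, Frobenius, and the Hochschild-type $p$-th power formula for $m_x\partial$.
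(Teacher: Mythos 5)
Your proof is correct and rests on the same ingredients as the paper's: the Hochschild-type identity giving $\ad_{z^2}^p=2z^p\ad_z^p$, the identity $\ad_{\Lambda_p(u,v)}=\ad_{u+v}^p-\ad_u^p-\ad_v^p$, and the Frobenius relation $(x+y)^p=x^p+y^p$; the paper simply runs the computation in the opposite direction, expanding $\ad_{x^p\widetilde{y}+y^p\widetilde{x}+\Phi_p(x,y)}$ and simplifying to $\ad_{xy}^p$, which lets it bypass your final telescoping step. That step does check out: under $\ad$, both $\Lambda_p((x+y)^2,-x^2)+\Lambda_p(2xy+y^2,-y^2)$ and $-\Lambda_p(x^2,y^2)-\Lambda_p(x^2+y^2,2xy)$ reduce to $-\ad_{(x+y)^2}^p+\ad_{x^2}^p+\ad_{y^2}^p+\ad_{2xy}^p$.
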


\begin{proof} (1) We first prove the assertion when $x=y$.
By the Leibniz rule, we have $\ad_{(fg)}=f\ad_g+g\ad_f$
for any $f, g\in A$. Clearly,
$$\ad_{x^2}^p=(2x\ad_x)^p=(2x)^p(\ad_x)^p=2x^p\ad_x^p
=2x^p\ad_{\widetilde{x}}=\ad_{2x^p\widetilde{x}}.$$
In the general case, considering the
universal enveloping algebra of the Lie algebra $(A, \{-, -\})$ and
using \eqref{E1.1.4}, we get
$\ad_{\Lambda_p(f, g)}=\ad_{f+g}^p-\ad_{f}^p-\ad_{g}^p$ for any $f, g\in A$.
Therefore,
\begin{align*}
\ad_{x^p\widetilde{y}+y^p\widetilde{x}+\Phi(x, y)}
=&\ad_{x^p\widetilde{y}+y^p\widetilde{x}+
(x^p+y^p)\Lambda_p(x, y)-\frac{1}{2}(\Lambda_p(x^2, y^2)+\Lambda_p(x^2+y^2, 2xy))}\\
=& x^p\ad_y^p+y^p\ad_x^p+(x^p+y^p)(\ad_{x+y}^p-\ad_x^p-\ad_y^p)\\
&+\frac{1}{2}\left(\ad_{x^2}^p+\ad_{y^2}^p+ \ad_{2xy}^p-\ad_{(x+y)^2}^p\right)\\
= & x^p\ad_y^p+y^p\ad_x^p +(x^p+y^p)(\ad_{x+y}^p-\ad_x^p-\ad_y^p)\\
&+x^p\ad_x^p+y^p\ad_y^p+\ad_{xy}^p-(x+y)^p\ad_{x+y}^p\\
=& \ad_{xy}^p,
\end{align*}
which completes the proof.

(2) It is an immediate consequence of (1).
\end{proof}

Concerning the notation $\Phi_p$ in \eqref{E3.2.1}, we also have the
following characterization by considering the Poisson enveloping
algebra.

\begin{proposition}\label{xxpro3.3}
Let $A$ be a Poisson algebra and $\P(A)$ the Poisson enveloping
algebra of $A$. Then, for all $x, y\in A$, we have
\begin{align}
\label{E3.3.1}\tag{E3.3.1}
\iota_2(\d \Phi_p(x, y))=
(\iota_2(\d(xy)))^{p}-\iota_1(x^p)(\iota_2(\d y))^{p}
-\iota_1(y^p)(\iota_2(\d x))^{p}
\end{align}
\end{proposition}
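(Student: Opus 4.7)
My plan is to apply $\iota_2 \circ \d$ to both sides of the defining formula \eqref{E3.2.1} for $\Phi_p(x,y)$ and reduce the result, inside the associative algebra $\P(A)$, to the right-hand side of \eqref{E3.3.1}. The guiding observation is that the composite $\iota_2 \circ \d \colon A \to \P(A)$ turns Poisson brackets into commutators: specializing the Lie bracket formula on $\Omega_{A/\K}$ to $[1\cdot\d f, 1\cdot\d g]$ gives $[\d f, \d g] = \d\{f,g\}$, and then the Lie-homomorphism property of $\iota_2$ yields $\iota_2(\d\{f,g\}) = [\iota_2(\d f), \iota_2(\d g)]$. Since $\Lambda_p(f,g)\in A$ is a combination of iterated Poisson brackets in $f$ and $g$, iterating this identity and then invoking Jacobson's formula \eqref{E1.1.4} inside $\P(A)$ gives the key lemma
\[
\iota_2(\d \Lambda_p(f, g)) \;=\; \bigl(\iota_2(\d f) + \iota_2(\d g)\bigr)^{p} - \iota_2(\d f)^{p} - \iota_2(\d g)^{p}.
\]

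Because $\iota_2\circ\d$ is a derivation (by \eqref{E2.3.3}) and $\d(f^p) = pf^{p-1}\d f = 0$ in characteristic $p$, applying it termwise to \eqref{E3.2.1} and abbreviating $u = \iota_1(x)$, $v = \iota_1(y)$, $X = \iota_2(\d x)$, $Y = \iota_2(\d y)$, $Q = uY + vX = \iota_2(\d(xy))$, produces
\[
\iota_2(\d\Phi_p(x,y)) = (u^p+v^p)\bigl[(X+Y)^p - X^p - Y^p\bigr] - \bigl[((u+v)(X+Y))^p - (uX)^p - (vY)^p - Q^p\bigr].
\]
Here I use $\iota_2(\d x^2) = 2uX$, $\iota_2(\d y^2) = 2vY$, the identity $2uX + 2vY + 2(uY+vX) = 2(u+v)(X+Y)$, and $2^p = 2$ in $\K$ to collect the two $\Lambda_p$ contributions with common sign into a single $((u+v)(X+Y))^p$ block.

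The decisive final step is the commutation observation: by \eqref{E2.3.2}, for any $f\in A$ one has $[\iota_2(\d f), \iota_1(f)] = \iota_1(\{f,f\}) = 0$, so $\iota_1(f)$ and $\iota_2(\d f)$ commute in $\P(A)$. Applied to $f = x$, $f = y$, and $f = x+y$ this yields the three identities $(uX)^p = u^p X^p$, $(vY)^p = v^p Y^p$, and $((u+v)(X+Y))^p = (u+v)^p(X+Y)^p = (u^p+v^p)(X+Y)^p$, where the last equality uses that $u,v$ lie in the commutative subalgebra $\iota_1(A)$. Substituting these into the displayed expression, the two copies of $(u^p+v^p)(X+Y)^p$ cancel, as do the $u^p X^p$ and $v^p Y^p$ contributions, leaving exactly
\[
Q^p - v^p X^p - u^p Y^p \;=\; (\iota_2(\d(xy)))^{p} - \iota_1(y^p)(\iota_2(\d x))^{p} - \iota_1(x^p)(\iota_2(\d y))^{p},
\]
which is \eqref{E3.3.1}.

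The main obstacle is bookkeeping: correctly tracking the $\tfrac{1}{2}$'s and the characteristic-$p$ identity $2^p = 2$, and confirming that the expansion of $\Lambda_p(x^2+y^2,\,2xy)$ genuinely aggregates into a single $((u+v)(X+Y))^p$ block. Conceptually, the crucial input is the self-commutation $[\iota_2(\d f), \iota_1(f)] = 0$, which is the algebraic shadow of the classical fact that any function Poisson-commutes with itself; this is exactly what triggers the cancellation $((u+v)(X+Y))^p = (u^p+v^p)(X+Y)^p$ and forces the final expression to collapse to \eqref{E3.3.1}.
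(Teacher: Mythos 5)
Your proposal is correct and uses exactly the same ingredients as the paper's proof: the fact that $\iota_2\circ\d$ is a Lie algebra homomorphism so that Jacobson's formula \eqref{E1.1.4} applied to the Frobenius map of $\P(A)$ gives $\iota_2(\d\Lambda_p(f,g))=(\iota_2(\d f)+\iota_2(\d g))^p-\iota_2(\d f)^p-\iota_2(\d g)^p$, together with the commutation $[\iota_2(\d f),\iota_1(f)]=0$ yielding $(\iota_1(f)\iota_2(\d f))^p=\iota_1(f^p)(\iota_2(\d f))^p$. The only difference is organizational — you expand $\iota_2(\d\Phi_p(x,y))$ termwise, whereas the paper computes $(\iota_2(\d(x+y)^2))^p$ in two ways and compares — so this is essentially the paper's argument.
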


\begin{proof}  By the definition of $\P(A)$, we have
\[(0, \d x^2)^p=(0,2x\d x)^p=((2x, 0)(0, \d x))^p
=(2x, 0)^p(0, \d x)^p=2(x^p, 0)(0, \d x)^p\]
and hence
\begin{equation}
\label{E3.3.2}\tag{E3.3.2}
(\iota_2(\d x^2))^p=2\iota_1(x^p)(\iota_2(\d x))^p
\end{equation}
for any $x\in A$.
It follows that the equation \eqref{E3.3.1} holds when $x=y$.

Considering the Frobenius map of $\P(A)$, we have
\begin{align*}
(\iota_2(\d(x+y)))^p = & (0, \d (x+y))^p=((0, \d x)+(0, \d y))^p\\
= & (0, \d x)^p+(0, \d y)^p+\Lambda_p((0, \d x), (0, \d y))\\
= & (\iota_2(\d x))^p+(\iota_2(\d y))^p+\iota_2(\d \Lambda_p(x, y))
\end{align*}
since $\iota_2$ is a homomorphism of Lie algebras. By the above computation
and \eqref{E3.3.2}, we have
\begin{align*}
(\iota_2(\d (x+y)^2))^p & = 2\iota_1((x+y)^p)(\iota_2(\d(x+y)))^p\\
& = 2\iota_1(x^p+y^p)((\iota_2(\d x))^p+(\iota_2(\d y))^p+\iota_2(\d \Lambda_p(x, y))).
\end{align*}
By a direct calculation and \eqref{E3.3.2},
\begin{align*}
(\iota_2(\d (x+y)^2))^p  =&(\iota_2(\d x^2+\d y^2+2\d(xy)))^p\\
 =& (\iota_2(\d x^2+\d y^2))^p+(\iota_2(2\d(xy)))^p+\iota_2(\d \Lambda_p(x^2+y^2, 2xy))\\
 =& (\iota_2(\d x^2))^p+(\iota_2(\d y^2))^p+\iota_2(\d \Lambda_p(x^2, y^2))\\
  & \quad +2(\iota_2(\d(xy)))^p+\iota_2(\d \Lambda_p(x^2+y^2, 2xy))\\
 =& 2\iota_1(x^p)(\iota_2(\d x))^p+2\iota_1(y^p)(\iota_2(\d y))^p
    +\iota_2(\d \Lambda_p(x^2, y^2))\\
  & \quad +2(\iota_2(\d(xy)))^p
	   +\iota_2(\d \Lambda_p(x^2+y^2, 2xy))
\end{align*}
Comparing the above two equations, we get
\begin{align*}
(\iota_2(\d (xy)))^p+&\frac{1}{2}(\iota_2(\d (\Lambda_p(x^2, y^2)
+\Lambda_p(x^2+y^2, 2xy))))\\
= & \iota_1(x^p)(\iota_2(\d y))^p+\iota_1(y^p)(\iota_2(\d x))^p
+\iota_1(x^p+y^p)\iota_2(\d \Lambda_p(x, y))\\
= & \iota_1(x^p)(\iota_2(\d y))^p+\iota_1(y^p)(\iota_2(\d x))^p
+\iota_2(\d((x^p+y^p)\Lambda_p(x, y))).
\end{align*}
Therefore,
\begin{align*}
\iota_2(\d \Phi_p(x, y)) &=\iota_2(\d ((x^p+y^p)\Lambda_p(x, y)-
\frac{1}{2}(\Lambda_p(x^2, y^2)+\Lambda_p(x^2+y^2, 2xy))))\\
&= (\iota_2(\d(xy)))^p-\iota_1(x^p)(\iota_2(\d y))^p-\iota_1(y^p)(\iota_2(\d x))^p.
\end{align*}
This finishes the proof.
\end{proof}

For a weakly restricted Poisson algebra, it is desired to consider some
compatibility between the $p$-map and the associative multiplication.
By removing $\ad$ from \eqref{E3.2.3} (which can be done in some
cases), we obtain \eqref{E3.4.1} below. Similarly, if we remove $\ad$ from
\eqref{E3.2.2}, we obtain \eqref{E3.5.1} below. Both Lemma \ref{xxlem3.2}
and Proposition \ref{xxpro3.3} suggest the following definition.
Following Lemma \ref{xxlem3.2}(2), condition \eqref{E3.4.1} is forced.

\begin{definition}
\label{xxdef3.4}
Let $(A, \cdot, \{-, -\}, (-)^{\{p\}})$ be a weakly restricted Poisson
algebra over $\K$. We call $A$ a \emph{restricted Poisson algebra}, if,
for every $x\in A$,
\begin{align}
\pp{(x^2)}=2x^p\pp{x}. \label{E3.4.1}\tag{E3.4.1}
\end{align}
In this case, the $p$-map $\pp{(-)}$ is a \emph{restricted Poisson
structure} on $A$.
\end{definition}

Next we give another description of condition \eqref{E3.4.1} which is
convenient for some computation.

\begin{proposition}
\label{xxpro3.5}
Let $A$ be a weakly restricted Poisson algebra.
\begin{enumerate}
\item[(1)]
Suppose  \eqref{E3.4.1} holds. Then
$\pp{(\la 1_A)}=0$, for all $\la\in \K$.
\item[(2)]
Equation \eqref{E3.4.1} holds for all $x\in A$ if and only if every
pair of elements $(x,y)$ in $A$ satisfies
\begin{align}\label{E3.5.1}\tag{E3.5.1}
\pp{(xy)}=x^p\pp{y}+y^p\pp{x}+\Phi_p(x, y).
\end{align}
As a consequence, $A$ is a restricted Poisson algebra if and
only if \eqref{E3.5.1} holds.
\item[(3)]
Suppose \eqref{E3.5.1} holds. Then
\begin{equation}
\label{E3.5.2}\tag{E3.5.2}
\pp{(x^n)}=n x^{(n-1)p} \pp{x}
\end{equation}
for all $n$. As a consequence, $\pp{(x^p)}=0$
for all $x\in A$.
\item[(4)]
If $\pp{(1_A)}=0$, then \eqref{E3.5.1} holds for pairs
$(x,\lambda 1_A)$ and $(\lambda 1_A,x)$ for all $x\in A$
and all $\lambda\in \K$.
\end{enumerate}
\end{proposition}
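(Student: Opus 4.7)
The plan is to dispatch the four parts largely by direct computation, using that $(A,\{-,-\},\pp{(-)})$ is already a restricted Lie algebra and that the Leibniz rule forces various Poisson brackets (involving constants, or involving only powers of one element) to vanish.

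For (1) I would substitute $x=\la 1_A$ into \eqref{E3.4.1}. Using axiom (2) of Definition \ref{xxdef1.1} on both sides yields $\la^{2p}\pp{(1_A)}=2\la^{2p}\pp{(1_A)}$, so taking $\la=1$ gives $\pp{(1_A)}=0$, and then $\pp{(\la 1_A)}=\la^p\pp{(1_A)}=0$ for every $\la\in\K$.

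For (2), the forward implication is a polarization. I apply \eqref{E3.4.1} to $x+y$, expand $(x+y)^2=x^2+2xy+y^2$, and on the resulting $p$-map unfold two applications of axiom (3) of Definition \ref{xxdef1.1}, using also $(2xy)^{\{p\}}=2^p(xy)^{\{p\}}=2(xy)^{\{p\}}$ in characteristic $p$. On the other side I expand $2(x+y)^p(x+y)^{\{p\}}$ via $(x+y)^p=x^p+y^p$ (commutativity) and axiom (3). After cancelling the common terms $2x^p\pp{x}+2y^p\pp{y}$ and dividing by $2$, what remains is precisely \eqref{E3.5.1} with $\Phi_p(x,y)$ as in \eqref{E3.2.1}. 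The converse follows by setting $y=x$ in \eqref{E3.5.1} and noting that for any $a$, $\Lambda_p(a,a)=(2a)^{\{p\}}-2\pp{a}=(2^p-2)\pp{a}=0$, so each of the three $\Lambda_p$-summands in $\Phi_p(x,x)$ vanishes and \eqref{E3.5.1} collapses to \eqref{E3.4.1}.

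For (3), I induct on $n$, applying \eqref{E3.5.1} to the pair $(x,x^{n-1})$. The key observation is that $\{x,x^k\}=0$ for all $k$ (by Leibniz and $\{x,x\}=0$), hence \emph{every} iterated Poisson bracket of powers of $x$ is zero. This forces $\Lambda_p(x,x^{n-1})$, $\Lambda_p(x^2,x^{2(n-1)})$ and $\Lambda_p(x^2+x^{2(n-1)},2x^n)$ all to vanish, so $\Phi_p(x,x^{n-1})=0$. The recursion $\pp{(x^n)}=x^p\pp{(x^{n-1})}+x^{(n-1)p}\pp{x}$ then gives $\pp{(x^n)}=n x^{(n-1)p}\pp{x}$ by induction, and specializing to $n=p$ yields $\pp{(x^p)}=0$.

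For (4), $\{x,1_A\}=0$ (from Leibniz applied to $1_A=1_A\cdot 1_A$), so every iterated Poisson bracket involving $\la 1_A$ vanishes, and thus every $\Lambda_p$-term in the definition \eqref{E3.2.1} of $\Phi_p(x,\la 1_A)$ is zero. Combined with $\pp{(1_A)}=0$ and axiom (2), the pair $(x,\la 1_A)$ reduces \eqref{E3.5.1} to the identity $(\la x)^{\{p\}}=\la^p\pp{x}$, which is axiom (2). The symmetry of $\Phi_p$ in its two arguments (clear from \eqref{E3.2.1}) together with the commutativity of $\cdot$ disposes of $(\la 1_A,x)$. The main obstacle I anticipate is the bookkeeping in (2): one must keep track of the factor $\tfrac12$ (using $p\geq 3$), of $2^p=2$, and of which applications of axiom (3) produce which $\Lambda_p$-terms; once these are lined up correctly, the rest is mechanical.
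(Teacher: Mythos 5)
Your proposal is correct and follows essentially the same route as the paper: part (1) by specializing \eqref{E3.4.1} at $1_A$, part (2) by polarizing $\pp{((x+y)^2)}$ two ways via Definition \ref{xxdef1.1}(3) and comparing (with the converse from $\Phi_p(x,x)=0$), part (3) by induction using $\Phi_p(x,x^{n-1})=0$, and part (4) from $\Phi_p(x,\la 1_A)=0$. The only difference is that you supply the routine details the paper leaves implicit in parts (3) and (4), and those details check out.
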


\begin{proof}
(1) Clearly, $\pp{1_A}=2\cdot 1_A^p \pp{1_A}$ and hence $\pp{1_A}=0$.
For every $\la\in \K$, $\pp{(\la 1_A)}=\la^p\pp{1_A}=0$.

(2) The `` if " part is trivial since $\Phi_p(x, x)=0$ for any $x\in A$.
Next, we show the `` only if " part. By \eqref{E3.4.1} and Definition
\ref{xxdef1.1}(3), we have
\begin{align*}
\pp{((x+y)^2)}=2(x+y)^p\pp{(x+y)}=2(x^p+y^p)(\pp{x}+\pp{y}+\Lambda_p(x, y))
\end{align*}
Since $(A, \{-, -\}, \pp{(-)})$ is a restricted Lie algebra, it follows
from Definition \ref{xxdef1.1}(2,3) that
\begin{align*}
\pp{((x+y)^2)}&=\pp{(x^2+y^2+2xy)}\\
&= \pp{(x^2+y^2)}+2^p\pp{(xy)}+\Lambda_p(x^2+y^2, 2xy)\\
&=\pp{(x^2)}+\pp{(y^2)}+\Lambda_p(x^2, y^2)+2^p\pp{(xy)}
   +\Lambda_p(x^2+y^2, 2xy)\\
&=2x^p\pp{x}+2y^p\pp{y}+\Lambda_p(x^2, y^2)+2\pp{(xy)}
   +\Lambda_p(x^2+y^2, 2xy)
\end{align*}
Comparing the above two equations and using $2\neq 0$, we obtain
equation\eqref{E3.5.1}.

(3) This follows by induction.

(4) First of all, $\pp{(\lambda 1_A)}=\lambda^p \pp{1_A}=0$
for all $\lambda\in \K$.
The assertion follows by the fact $\Phi_p(\lambda 1_A, x)=
\Phi_p(x,\lambda 1_A)=0$.
\end{proof}

\begin{remark}
\label{xxrem3.6}
Several remarks are collected below.
\begin{enumerate}
\item[(1)]
As in the paper \cite{BK}, we assume that $p\geq 3$.
So the polynomial $\Phi_p(x,y)$ in \eqref{E3.2.1}
is well-defined. When $p=3$, we have
\[\Phi_3(x, y)=x^2y\{y,y,x\}+xy^2\{x,x,y\}+xy\{x,y\}^2.\]
For any $p>3$, it is too long to write out all terms
like above.
\item[(2)]
Considering $\Phi_p(x,y)$ as an element in $FP(V)$,
where $V=\K x\oplus \K y$, it is homogeneous
of degree $p+1$ with respect to $\deg_2$ and homogeneous
of degree $2p$ with respect to $\deg_1$.
\item[(3)]
In \cite[Definition 1.8]{BK}, Bezrukavnikov-Kaledin defines
a {\it restricted Poisson algebra} as a weakly restricted
Poisson algebra $(A,\{-,-\}, \pp{(-)})$ such that the $p$-map
satisfies
\begin{equation}
\label{E3.6.1}\tag{E3.6.1}
\pp{(xy)}=x^p \pp{y}+y^p \pp{x}+P(x,y)
\end{equation}
for all $x,y\in A$. Here $P(x,y)$ is a canonical quantized
polynomial determined by \cite[{\rm{(1.3)}}]{BK}. We will show
that Equation \eqref{E3.6.1} is equivalent to \eqref{E3.5.1}.
\item[(4)]
The polynomial $P(x,y)$ is defined implicitly, but it
follows from \cite[{\rm{(1.3)}}]{BK}
that $P(x,x)=0$. Therefore a restricted Poisson algebra in the
sense of \cite[Definition 1.8]{BK} is a restricted Poisson
algebra in the sense of Definition \ref{xxdef3.4}.
\item[(5)]
There are other interpretations of $\Phi_p(x,y)$.
By using the equation
$$xy=\frac{1}{4} [(x+y)^2-(x-y)^2]$$
we obtain that
\begin{equation}
\label{E3.6.2}\tag{E3.6.2}
\pp{(xy)}=x^p \pp{y}+y^p \pp{x}+\Phi_p'(x,y)
\end{equation}
where
\begin{equation}
\label{E3.6.3}\tag{E3.6.3}
\Phi_p'(x,y)=\frac{1}{4}\Lambda_p((x+y)^2, -(x-y)^2)
+\frac{1}{2} ((x^p+y^p)\Lambda_p(x,y)-(x^p-y^p)\Lambda_p(x,-y)).
\end{equation}
One can show that $\Phi_p(x,y)=\Phi_p'(x,y)$ in the
free Poisson algebra generated by $x$ and $y$.
\item[(6)]
The following is clear by definition.
\begin{enumerate}
\item[(a)]
$\Lambda_p(x,y)=\Lambda_p(y,x)$ for all $x,y\in A$.
\item[(b)]
If $\{x,y\}=0$, then $\Lambda_p(x,y)=0$.
\item[(c)]
$\Phi_p(x,y)=\Phi_p(y,x)$ for all $x,y\in A$.
\item[(d)]
If $\{x,y\}=0$, then
$\Phi_p(x,y)=0$.
\end{enumerate}
\end{enumerate}
\end{remark}

\begin{lemma}
\label{xxlem3.7} Definitions of restricted
Poisson algebras in Definition {\rm{\ref{xxdef3.4}}}
and \cite[Definition 1.8]{BK} are
equivalent.
\end{lemma}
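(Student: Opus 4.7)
The plan is as follows. One direction is essentially recorded in Remark 3.6(4): setting $y=x$ in the Bezrukavnikov--Kaledin formula (E3.6.1) and using $P(x,x)=0$ yields (E3.4.1), so every restricted Poisson algebra in the sense of \cite[Definition 1.8]{BK} satisfies Definition 3.4. For the converse, Proposition 3.5(2) shows that Definition 3.4 is equivalent to the identity $\pp{(xy)} = x^p \pp{y} + y^p \pp{x} + \Phi_p(x,y)$, which has exactly the shape of (E3.6.1) with $P$ replaced by $\Phi_p$. Hence the equivalence of the two definitions reduces to the universal identity $\Phi_p(x,y) = P(x,y)$, an identity between two polynomial expressions in $x$ and $y$ involving the Poisson bracket, which I would verify in the free Poisson algebra $A := FP(\K x \oplus \K y)$.

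To prove this universal identity, I would work inside the Poisson enveloping algebra $\P(A)$. Proposition 3.3 characterises $\iota_2(\d\, \Phi_p(x,y))$ as
\[
(\iota_2(\d(xy)))^p - \iota_1(x^p)(\iota_2(\d y))^p - \iota_1(y^p)(\iota_2(\d x))^p.
\]
Unpacking \cite[(1.3)]{BK} --- which implicitly defines $P(x,y)$ inside a deformation quantization of $A$ --- and transporting it into the Poisson enveloping algebra via the standard dictionary between quantizations and Poisson modules, I expect the same equation to hold with $P$ in place of $\Phi_p$. Because $\Omega_{A/\K}$ is free over $A$ by Lemma 2.5(1), the canonical map $\iota_2\colon \Omega_{A/\K}\to \P(A)$ is injective, so this forces $\d\, \Phi_p(x,y) = \d\, P(x,y)$ in $\Omega_{A/\K}$, and therefore $\Phi_p(x,y) - P(x,y) \in \ker(\d) = A^p$ by Lemma 2.5(2).

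To pin the difference down as zero, I would use the grading $\deg_2$ of Remark 3.6(2): $\Phi_p(x,y)$ is homogeneous of $\deg_2$-degree $p+1$, and the same homogeneity holds for $P(x,y)$ by the corresponding universality of the BK construction. Every homogeneous element of $A^p$ has $\deg_2$-degree divisible by $p$; since $p\geq 3$ does not divide $p+1$, the only element of $A^p$ in $\deg_2$-degree $p+1$ is zero, and so $\Phi_p(x,y) = P(x,y)$, finishing the equivalence.

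The main obstacle is the middle step: faithfully translating BK's implicit definition of $P(x,y)$, which is phrased in terms of a deformation quantization of $A$, into an identity inside $\P(A)$ that lines up exactly with the one supplied by Proposition 3.3. Once that translation is in place, the freeness of $\Omega_{A/\K}$ over the free Poisson algebra, together with the simple $\deg_2$ argument, closes the proof.
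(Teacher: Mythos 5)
Your reduction of the lemma to the single universal identity $P(x,y)=\Phi_p(x,y)$ is exactly the paper's first move (via Proposition \ref{xxpro3.5}(2) and the observation $P(x,x)=0$), and your closing step --- using $\ker(\d)=A^p$ in the free Poisson algebra together with the $\deg_2$-homogeneity of degree $p+1$ to kill an ambiguity lying in $A^p$ --- is sound. The problem is the middle of your argument, which is where the entire content of the lemma sits. You write that, after ``unpacking'' \cite[(1.3)]{BK} and ``transporting it into the Poisson enveloping algebra via the standard dictionary,'' you ``expect'' the identity of Proposition \ref{xxpro3.3} to hold with $P$ in place of $\Phi_p$. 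That expectation is precisely what has to be proved, and there is no standard dictionary to invoke: $P(x,y)$ is defined implicitly as (one half of) $(xy)^p-x^py^p$ inside the free quantized algebra $Q^{\bullet}(V)$, i.e.\ the Rees algebra of the Poincar\'e--Birkhoff--Witt filtration on the tensor algebra $T(V)$, which is a genuinely noncommutative object not obviously related to $\P(A)$. Naming this translation as ``the main obstacle'' does not discharge it; as written, the proof assumes the conclusion in a disguised form.

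The paper closes this gap by working directly where $P$ lives. It computes $(x+y)^{2p}$ in the Rees ring in two ways --- once as $(x^p+y^p+\Lambda_p(x,y))^2$ and once by expanding $(x^2+y^2+xy+yx)^p$ --- then isolates $2((xy)^p-x^py^p)$ and shows that all the extra terms ($(\Lambda_p(x,y))^2$, $[x,y]^p$, $[x^p,y^p]$, $\Lambda_p(xy,\pm yx)$) lie in lower pieces ${\mathcal F}_1$, ${\mathcal F}_2$ or ${\mathcal F}_p$ of the filtration, hence can be discarded because $P(x,y)$ is homogeneous of degree $p+1$. If you want to salvage your route through $\P(A)$, you would still have to carry out an equivalent filtered computation to extract an explicit formula for $P(x,y)$ from \cite[(1.3)]{BK}; at that point the detour through Proposition \ref{xxpro3.3} and $\ker(\d)$ becomes unnecessary, since the explicit formula already coincides with $\Phi_p(x,y)$ after symmetrizing the commutative product.
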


\begin{proof} Let $P(x,y)$ be the polynomial defined in
\cite[(1.3)]{BK}. By Proposition \ref{xxpro3.5}(2), it remains to
show that $P(x,y)=\Phi_p(x,y)$. Let $Lie(V)$ be the free
Lie algebra over a vector space $V$ and consider the tensor
(free) algebra $T(V)$ as a universal enveloping algebra over
$Lie(V)$. Then we have a Poincar{\'e}-Birkhoff-Witt filtration on
$T(V)$. The free quantized algebra $Q^{\bullet}(V)$ is the Rees
algebra associated to this filtration. By definition, for each $n$,
\[{\mathcal F}_n:=F_n T(V)= \Bbbk \oplus L^\bullet(V)
\oplus (L^\bullet(V))^2 \oplus \cdots \oplus (L^\bullet(V))^{n}.\]
We are omitting the symbol $h$ which represents the natural
embedding $h: {\mathcal F}_{\bullet}\to {\mathcal F}_{\bullet+1}$
in the Rees ring. Taking $V=\Bbbk x\oplus \Bbbk y$, we have the
following computation inside the Rees ring
\begin{align*}
(x^p+y^p)^2+&(\Lambda_p(x, y))^2+\Lambda_p(x, y)(x^p+y^p)
       +(x^p+y^p)\Lambda_p(x, y)\\
=&(x^p+y^p+\Lambda_p(x, y))^2\\
=& (x+y)^{2p}\\
=&(x^2+y^2+xy+yx)^p\\
=& (x^2+y^2)^{p}+(xy+yx)^{p}+\Lambda_p(x^2+y^2, xy+yx)\\
=& x^{2p}+y^{2p}+\Lambda_p(x^2, y^2)+(xy)^{p}+(yx)^p+\Lambda_p(xy, yx)\\
 &\qquad\qquad\qquad +\Lambda_p(x^2+y^2, xy+yx),
\end{align*}
and hence
\begin{align*}
(xy)^p+(yx)^p&-x^py^p-y^px^p\\
= &\Lambda_p(x, y)(x^p+y^p)+(x^p+y^p)\Lambda_p(x, y)+(\Lambda_p(x, y))^2\\
& \qquad -\Lambda_p(x^2, y^2)-\Lambda_p(xy, yx)-\Lambda_p(x^2+y^2, xy+yx).
\end{align*}
On the other hand,
\begin{align*}
[x, y]^p=(xy-yx)^p=(xy)^p-(yx)^p+\Lambda_p(xy, -yx).
\end{align*}
So we have
\begin{align*}
2P(x,y)&= 2((xy)^p-x^py^p)\\
= & \Lambda_p(x, y)(x^p+y^p)+(x^p+y^p)\Lambda_p(x, y)
       -\Lambda_p(x^2, y^2)-\Lambda_p(x^2+y^2, xy+yx)\\
&\qquad +(\Lambda_p(x, y))^2-\Lambda_p(xy, yx)
       -\Lambda_p(xy, -yx)+[x, y]^p-[x^p, y^p].
\end{align*}
In fact, it is easily seen that $(\Lambda_p(x, y))^2\in {\mathcal F}_2,
[x, y]^p\in {\mathcal F}_p$.
On the other hand,
\[[x^p, y^p]=\ad_x^p(y^p))=-\ad_x^{p-1}(\ad_y^p(x))\in {\mathcal F}_1,\]
where $\ad_x(y)=[x, y]$. By the equation (E1.1.3),  we have
\[\Lambda_p(xy, yx)=\sum_{x_k=xy\ {\rm or}\ yx}\dfrac{1}{\#(xy)}
\ad_{x_1}\cdots\ad_{x_{p-2}}([yx, xy]).\]
Since $[yx, xy]=[yx, yx+[x, y]]=[yx, [x, y]]\in {\mathcal F}_2$, we have
$\Lambda_p(xy, yx)\in {\mathcal F}_p$.
Similarly, $\Lambda_p(xy, -yx)\in {\mathcal F}_p$.
By definition \cite[(1.3)]{BK}, $P(x,y)$ is homogeneous of
degree $p+1$.
Therefore, after removing lower degree components,
\[2P(x, y)=\Lambda_p(x, y)(x^p+y^p)+(x^p+y^p)\Lambda_p(x, y)
-\Lambda_p(x^2, y^2)-\Lambda_p(x^2+y^2, xy+yx).\]
Since the multiplication is commutative in a Poisson algebra, we have
\begin{align*}
 P(x, y)=  (x^p+y^p)\Lambda_p(x, y)-\dfrac{1}{2}
(\Lambda_p(x^2, y^2)+\Lambda_p(x^2+y^2, 2xy))
=\Phi_p(x, y).
\end{align*}
\end{proof}

\section{Elementary properties and examples}
\label{xxsec4}

We start with something obvious.

\begin{definition}
\label{xxdef4.1}
Let $(A, \cdot, \{-, -\}, \pp{(-)})$ be a restricted Poisson algebra.
A Poisson ideal $I$ of $A$ is said to be \emph{restricted}, if
$\pp{x}\in I$ for any $x\in I$.
\end{definition}

The proofs of the following three assertions are easy
and omitted.

\begin{lemma}
\label{xxlem4.2}
Let $A$ be a restricted Poisson algebra. Suppose that $I$ is a Poisson
ideal of $A$ that is generated by $\{x_i\mid i\in S\}$ as an ideal of
the commutative ring $A$. If $\pp{x_i}\in I$ for any $i\in S$, then
$I$ is a restricted Poisson ideal.
\end{lemma}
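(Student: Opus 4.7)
The plan is to show $\pp{x}\in I$ for every $x\in I$. Since $I$ is generated as a commutative ideal of $A$ by $\{x_i\mid i\in S\}$, every $x\in I$ is a finite sum $\sum_{j=1}^n a_j x_{i_j}$ with $a_j\in A$ and $i_j\in S$. Using the additivity formula $\pp{(u+v)}=\pp{u}+\pp{v}+\Lambda_p(u,v)$ from Definition \ref{xxdef1.1}(3), the problem splits into two subclaims: (i) $\pp{(a x_i)}\in I$ for all $a\in A$ and $i\in S$; and (ii) $\Lambda_p(u,v)\in I$ whenever at least one of $u,v$ lies in $I$.

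Subclaim (ii) is the linchpin and should be dispatched first. By \eqref{E1.1.3}, $\Lambda_p(u,v)$ is a $\K$-linear combination of nested Poisson brackets $[z_1,z_2,\ldots,z_{p-1},z_p]$ with each $z_k\in\{u,v\}$. Since $I$ is a Poisson ideal, any iterated Poisson bracket that has an entry in $I$ automatically lies in $I$, so $\Lambda_p(u,v)\in I$.

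For subclaim (i), I would apply Proposition \ref{xxpro3.5}(2) to obtain
\[
\pp{(a x_i)}=a^p\pp{x_i}+x_i^p\pp{a}+\Phi_p(a,x_i).
\]
The first summand lies in $I$ by the hypothesis $\pp{x_i}\in I$, the second lies in $I$ because $x_i\in I$, and the third can be expanded via \eqref{E3.2.1} as an $A$-linear combination of $\Lambda_p(a,x_i)$, $\Lambda_p(a^2,x_i^2)$, and $\Lambda_p(a^2+x_i^2,2ax_i)$. Each of these three $\Lambda_p$ terms has at least one argument ($x_i$, $x_i^2$, or $2ax_i$) in $I$, so subclaim (ii) puts $\Phi_p(a,x_i)\in I$, which finishes (i). An obvious induction on $n$, splitting $x=\sum_{j<n}a_j x_{i_j}+a_n x_{i_n}$ and combining (i), (ii), and the inductive hypothesis through the sum formula, then concludes the proof. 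The only genuine content is the Lie-polynomial observation in (ii); the one place to stay attentive is ensuring that every $\Lambda_p$ buried inside $\Phi_p(a,x_i)$ has an argument in $I$, which \eqref{E3.2.1} makes transparent.
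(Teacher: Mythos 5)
Your argument is correct and complete: the paper omits the proof of Lemma \ref{xxlem4.2} as ``easy,'' and your decomposition into the two subclaims --- that $\Lambda_p(u,v)\in I$ once one argument is in the Poisson ideal $I$ (via \eqref{E1.1.3} and the Lie-ideal property), and that $\pp{(ax_i)}\in I$ via \eqref{E3.5.1} and \eqref{E3.2.1} --- is exactly the intended filling-in of those details. No gaps.
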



\begin{proposition}
\label{xxpro4.3}
Let $A$ be a restricted Poisson algebra and $I$ a restricted Poisson
ideal of $A$. Then the quotient Poisson algebra $A/I$ is a restricted
Poisson algebra.
\end{proposition}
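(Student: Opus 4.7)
The plan is to endow $A/I$ with the $p$-map defined by $\pp{\overline{x}} := \overline{\pp{x}}$ and check it satisfies all the axioms of a restricted Poisson algebra. Since $I$ is a Poisson ideal, $A/I$ already carries a Poisson algebra structure, so what needs verification is well-definedness of the $p$-map and the compatibility conditions from Section \ref{xxsec3}.

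For well-definedness, suppose $x - y \in I$, and write $x = y + z$ with $z \in I$. By Definition \ref{xxdef1.1}(3),
\[
\pp{x} = \pp{y} + \pp{z} + \Lambda_p(y, z).
\]
Since $I$ is a restricted Poisson ideal, $\pp{z} \in I$. Moreover, by formula \eqref{E1.1.3}, $\Lambda_p(y, z)$ is a linear combination of iterated Poisson brackets of the form $\{w_1, w_2, \ldots, w_{p-1}, w_p\}$ in which each $w_k$ equals either $y$ or $z$ and at least one $w_k$ equals $z$. Since $I$ is a Poisson ideal, any iterated bracket containing a factor from $I$ lies in $I$, so $\Lambda_p(y, z) \in I$. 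Hence $\pp{x} \equiv \pp{y} \pmod{I}$, and the $p$-map descends to $A/I$.

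Next, I would verify the restricted Lie algebra conditions of Definition \ref{xxdef1.1} for $(A/I, \{-,-\}, \pp{(-)})$. Each of conditions (1)--(3) follows by applying the quotient projection $\pi\colon A \to A/I$ to the corresponding identity in $A$, since $\pi$ is a Poisson algebra homomorphism and therefore preserves both $\ad$ and $\Lambda_p$. Finally, the compatibility \eqref{E3.4.1} in $A/I$, namely $\pp{(\overline{x}^2)} = 2\overline{x}^p\pp{\overline{x}}$, is obtained by applying $\pi$ to the identity $\pp{(x^2)} = 2x^p\pp{x}$ holding in $A$.

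The main obstacle is the well-definedness step, specifically checking that $\Lambda_p(y,z) \in I$ whenever $z \in I$; this relies on the explicit description of $\Lambda_p$ in \eqref{E1.1.3} together with the absorption property of a Poisson ideal under iterated brackets. Once this is established, the remaining axioms transfer to the quotient automatically.
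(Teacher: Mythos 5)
Your proof is correct; the paper omits the proof of this proposition as routine, and your argument is exactly the standard one intended. You correctly identify and handle the only point requiring care, namely that the induced $p$-map is well defined, which follows because every summand of $\Lambda_p(y,z)$ in \eqref{E1.1.3} contains a bracket factor equal to $z\in I$ and hence lies in the Poisson ideal $I$.
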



As a consequence, we have.

\begin{corollary}
\label{xxcor4.4}
Let $f\colon A \to A'$ be a homomorphism of restricted Poisson algebras.
Then $\Ker f$ is a restricted Poisson ideal of $A$.
\end{corollary}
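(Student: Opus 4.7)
The plan is to verify each of the three closure conditions required by Definition \ref{xxdef4.1} directly from the fact that $f$ is a homomorphism of restricted Poisson algebras, i.e., preserves the commutative multiplication, the Poisson bracket, and the $p$-map $\pp{(-)}$.

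First I would observe that since $f\colon A\to A'$ is an algebra homomorphism of commutative algebras, $\Ker f$ is an ordinary ideal of $A$: for $x\in \Ker f$ and $a\in A$, $f(ax)=f(a)f(x)=0$. Next, because $f$ preserves the Poisson bracket, for any $x\in \Ker f$ and $a\in A$ we have $f(\{x,a\})=\{f(x),f(a)\}=\{0,f(a)\}=0$, so $\Ker f$ is a Poisson ideal.

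The only point requiring the restricted structure is closure under $\pp{(-)}$. Here I would invoke Proposition \ref{xxpro3.5}(1), which gives $\pp{(\lambda 1_{A'})}=0$ for all $\lambda\in \K$; in particular $\pp{0}=0$ in $A'$. Since $f$ respects the $p$-map, for any $x\in \Ker f$,
\[
f(\pp{x}) = \pp{(f(x))} = \pp{0} = 0,
\]
so $\pp{x}\in \Ker f$. By Definition \ref{xxdef4.1}, this shows $\Ker f$ is a restricted Poisson ideal.

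There is no real obstacle here: the argument is essentially definition-chasing, and the only non-trivial input is the already-established identity $\pp{0}=0$ from Proposition \ref{xxpro3.5}(1). (One could alternatively appeal to Lemma \ref{xxlem4.2}, but since closure under the $p$-map is directly guaranteed on all of $\Ker f$ and not merely on a generating set, the direct verification is shorter.)
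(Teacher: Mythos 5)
Your proof is correct; the paper explicitly omits the proof of this corollary as "easy," and your direct definition-chasing verification (ideal, Poisson ideal, closure under $\pp{(-)}$ via $\pp{0}=0$) is exactly the intended argument. The only cosmetic remark is that $\pp{0}=0$ also follows immediately from Definition \ref{xxdef1.1}(2) with $\lambda=0$, without needing Proposition \ref{xxpro3.5}(1).
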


Let $A^p$ be the subalgebra of $A$ generated by $\{f^p\mid f\in A\}$ -- the
image of the Frobenius map.

\begin{lemma}
\label{xxlem4.5}
Let $A$ be a Poisson algebra and $f, g, h\in A$. Then the following hold:
\begin{enumerate}
\item[(1)]
$f^p\Phi_p(g, h)-\Phi_p(fg, h)+\Phi_p(f, gh)-h^p\Phi_p(f, g)=0$.
\item[(2)]
If $f$ is in the Poisson center of $A$, then
$$f^p\Phi_p(g, h)=\Phi_p(fg, h)=\Phi_p(g, fh).$$
\item[(3)]
$\Phi_p(f, g+h)-\Phi_p(f, g)-\Phi_p(f, h)=\Lambda_p(fg, fh)-f^p\Lambda_p(g, h)$.
\end{enumerate}
\end{lemma}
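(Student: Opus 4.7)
The approach for parts (1) and (3) is to work in the free Poisson algebra $A = FP(V)$ with $V = \K f \oplus \K g \oplus \K h$; by the universal property (Lemma \ref{xxlem2.4}) any identity proven there transfers to every Poisson algebra. In $FP(V)$, Lemma \ref{xxlem2.5} gives $\Ker\,\d = A^p$ and $\iota_2 \colon \Omega_{A/\K} \to \P(A)$ injective, so $\iota_2 \circ \d$ vanishes precisely on $A^p$. Combined with $A^p \cap A^c = 0$ (from the $A^p$-module decomposition $A = A^p \oplus A^c$), it suffices to show separately that the identity holds modulo $A^p$ (detected via $\iota_2 \circ \d$) and that both sides lie in $A^c$ (detected via Lemma \ref{xxlem2.6}).

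For part (1), I will apply $\iota_2 \circ \d$ to each of the four terms. Since $\d(f^p) = \d(h^p) = 0$, the factors $f^p$ and $h^p$ pull out as $\iota_1(f^p)$ and $\iota_1(h^p)$. Each $\iota_2(\d\Phi_p(\cdot,\cdot))$ expands via Proposition \ref{xxpro3.3} into one $(D(xy))^p$ term and two terms of shape $-\iota_1(x^p)(D(y))^p$, where $D := \iota_2 \circ \d$. Using commutativity of $\iota_1(A) \subset \P(A)$, the relation $\iota_1(a)\iota_2(\omega) = \iota_2(a\omega)$, and equalities like $\iota_1((fg)^p) = \iota_1(f^p)\iota_1(g^p)$, the twelve resulting terms cancel in six pairs. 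Hence $\text{LHS} \in A^p$. Lemma \ref{xxlem2.6}(3) shows $\Phi_p(g,h),\Phi_p(fg,h),\Phi_p(f,gh),\Phi_p(f,g) \in A^c$, and since $A^c$ is an $A^p$-submodule, the full $\text{LHS}$ lies in $A^c$. Thus $\text{LHS} = 0$ in $FP(V)$, and the universal property carries the identity to every Poisson algebra.

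Part (2) follows directly from (1). Centrality of $f$ gives $\{f,g\} = \{f,gh\} = \{f,h\} = 0$, hence $\Lambda_p(f,g) = \Lambda_p(f,gh) = \Lambda_p(f,h) = 0$ by Remark \ref{xxrem3.6}(6b), and therefore $\Phi_p(f,g) = \Phi_p(f,gh) = \Phi_p(f,h) = 0$ by Remark \ref{xxrem3.6}(6d). Substituting into (1) yields $f^p\Phi_p(g,h) = \Phi_p(fg,h)$; applying (1) with the roles of $f$ and $g$ interchanged yields $\Phi_p(fg,h) = \Phi_p(g,fh)$.

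Part (3) uses the same framework as (1), but with one new ingredient: $D$ is a Lie algebra homomorphism from $(A,\{-,-\})$ to $(\P(A),[-,-])$, since $[\d u, \d v] = \d\{u,v\}$ in $\Omega_{A/\K}$ and $\iota_2$ is a Lie homomorphism. This yields $(D(a) + D(b))^p = D(a)^p + D(b)^p + D(\Lambda_p(a,b))$ in $\P(A)$. Applying Proposition \ref{xxpro3.3} to each of $\Phi_p(f,g+h), \Phi_p(f,g), \Phi_p(f,h)$, expanding $D(f(g+h)) = D(fg) + D(fh)$ and $\iota_1((g+h)^p) = \iota_1(g^p) + \iota_1(h^p)$, and simplifying the resulting $p$-th powers via the displayed Lie-hom identity, one obtains $\iota_2(\d(\text{LHS})) = D(\Lambda_p(fg,fh)) - \iota_1(f^p)D(\Lambda_p(g,h)) = \iota_2(\d(\text{RHS}))$ (the $f^p$ pulls out since $\d f^p = 0$). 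Thus $\text{LHS} - \text{RHS} \in A^p$, and Lemma \ref{xxlem2.6}(3c) together with the $A^p$-module structure of $A^c$ gives $\text{LHS} - \text{RHS} \in A^c$, forcing equality. The main obstacle in both (1) and (3) is the careful bookkeeping of cancellations in $\P(A)$, especially distinguishing the commutative multiplication on $\iota_1(A)$ (where Frobenius collapses $(x+y)^p$ to $x^p + y^p$) from the noncommutative multiplication on the differential part, where the Lie-homomorphism property of $D$ is essential to handle $p$-th powers of sums.
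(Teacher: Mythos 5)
Your proposal is correct and follows essentially the same route as the paper: reduce to the free Poisson algebra, apply $\iota_2\circ\d$ and Proposition \ref{xxpro3.3} to see the identity holds modulo $A^p$, then use $A^p\cap A^c=\{0\}$ and Lemma \ref{xxlem2.6} to conclude it holds exactly. Your elaborations of part (2) (substituting the vanishing $\Phi_p$'s into (1), once directly and once with $f$ and $g$ swapped) and of part (3) (using that $\iota_2\circ\d$ is a Lie homomorphism to handle $(D(g)+D(h))^p$) correctly supply exactly the details the paper leaves implicit.
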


\begin{proof} It is clear that (2) is a consequence of (1). It suffices to show
assertions (1) and (3) for the free Poisson algebra $FP(A)$ since there is a
surjective Poisson algebra map $FP(A)\to A$ [Lemma \ref{xxlem2.4}].
So the hypothesis becomes that $f,g,h$ are in a $\K$-space $V$
sitting inside a free Poisson algebra $FP(V)$.

When $A$ is a free Poisson algebra $FP(V)$, by Lemma \ref{xxlem2.5}(1),
$\iota_2$ is injective. It follows from Lemma \ref{xxlem2.5}(2) that
\begin{enumerate}
\item[(a)]
the kernel of the map
$$A\xr{\d} \Omega_{A/\K}\xr{\iota_2} \P(A)$$
is $A^p$.
\end{enumerate}
Let $\{v_i\}_{i\in S}$ be a basis of the $V$. Let $A^c$ be the
$A^p$-submodule of $A=FP(V)$ defined before Lemma \ref{xxlem2.6}.
Then
\begin{enumerate}
\item[(b)]
$A^c\cap A^p=\{0\}$ and $\Lambda_p(x,y)\in A^c$
for all $x,y\in \K[V]$ by Lemma \ref{xxlem2.6}(2).
\end{enumerate}

Now we prove (1) and (3) under conditions (a) and (b).

(1)
For all $f, u\in A$, $\d (f^p u)=f^p\d u$ and
$\iota_2(\d (f^pu))=(f^p, 0)(0, \d u)\in \P(A)$.
By Proposition \ref{xxpro3.3}, we have
\begin{align*}
\iota_2(\d (f^p\Phi_p(g, h)))
& = (f^p, 0)(0, \d(gh))^p-(f^p g^p, 0)(0, \d h)^p-(f^ph^p, 0)(0, \d g)^p,\\
\iota_2(\d \Phi_p(fg, h))
& = (0, \d (fgh))^p-((fg)^p, 0)(0, \d h)^p-(h^p, 0)(0, \d (fg))^p,\\
\iota_2(\d \Phi_p(f, gh))
& = (0, \d (fgh))^p-(f^p, 0)(0, \d (gh))^p-((gh)^p, 0)(0, \d f)^p,\\
\iota_2(\d (h^p\Phi_p(f, g))
&=(h^p, 0)(0, \d(fg))^p-(h^pf^p, 0)(0, \d g)^p-(h^pg^p, 0)(0, \d f)^p.
\end{align*}
for all $f, g, h\in V$. It follows that
\[\iota_2(\d (f^p\Phi_p(g, h)-\Phi_p(fg, h)+\Phi_p(f, gh)-\Phi_p(f, g)h^p))=0.\]
By condition (a), we get $X:=f^p\Phi_p(g, h)-\Phi_p(fg, h)
+\Phi_p(f, gh)-h^p \Phi_p(f, g)\in A^p$.
By definition, $X$ is in the $A^p$-submodule generated by $\Lambda_p(x,y)$
for all $x,y\in A$, or in $A^c$ as given in condition (b).
But $A^p\cap A^c=\{0\}$ by condition (b), we obtain that
$X=0$ and that the desired identity holds.

(3) The proof of part (3) is similar to the proof of (1)
and is omitted.
\end{proof}

\begin{proposition}
\label{xxpro4.6}
Let $A$ be a weakly restricted Poisson algebra.
\begin{enumerate}
\item[(1)]
If $(x,y)$ satisfies \eqref{E3.5.1}, then so do $(x,\lambda y)$
and $(\lambda x, y)$ for all $\lambda\in \K$.
\item[(2)]
Let $f,g,h\in A$. Suppose that $(f,g)$ and $(g,h)$
satisfy \eqref{E3.5.1}.
Then $(fg,h)$ satisfies \eqref{E3.5.1} if and
only if $(f,gh)$ does.
\item[(3)]
If $(f,g)$ and $(f,h)$ satisfies \eqref{E3.5.1}, then so does
$(f, g+h)$.
\item[(3')]
If $(g,f)$ and $(h,f)$ satisfies \eqref{E3.5.1}, then so does
$(g+h,f)$.
\item[(4)]
Fix an $x\in A$ and let $R_x$ be the set of $y\in A$ such that
$(x,y)$ satisfies \eqref{E3.5.1}. Then $R_x$ is a $\Bbbk$-subspace
of $A$.
\item[(4')]
Fix an $x\in A$ and let $L_x$ be the set of $y\in A$ such that
$(y,x)$ satisfies \eqref{E3.5.1}. Then $L_x$ is a $\Bbbk$-subspace
of $A$.
\end{enumerate}
\end{proposition}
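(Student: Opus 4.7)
The plan is to prove parts (2), (3), and (1) by direct expansion of \eqref{E3.5.1}, deduce (3$'$) by symmetry, and obtain (4) and (4$'$) as formal consequences. The key tools are the restricted-Lie axioms $\pp{(\lambda z)} = \lambda^p \pp{z}$ and $\pp{(u+v)} = \pp{u} + \pp{v} + \Lambda_p(u,v)$, the characteristic-$p$ identity $(g+h)^p = g^p + h^p$ in a commutative ring, the symmetry $\Phi_p(x,y) = \Phi_p(y,x)$ from Remark \ref{xxrem3.6}(6)(c), and the two polynomial identities in Lemma \ref{xxlem4.5}.

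For part (2), I would compute $\pp{(fgh)}$ along the two factorizations $(fg)h$ and $f(gh)$. Assuming \eqref{E3.5.1} on $(f,g)$ and $(g,h)$, the \eqref{E3.5.1}-prescription for $(fg,h)$ yields
\[
\pp{((fg)h)} = f^pg^p\pp{h} + h^pf^p\pp{g} + h^pg^p\pp{f} + h^p\Phi_p(f,g) + \Phi_p(fg,h),
\]
while the \eqref{E3.5.1}-prescription for $(f,gh)$ yields the same expression but with $h^p\Phi_p(f,g) + \Phi_p(fg,h)$ replaced by $f^p\Phi_p(g,h) + \Phi_p(f,gh)$. These two quantities are equal by Lemma \ref{xxlem4.5}(1), so if either of the pairs $(fg,h)$, $(f,gh)$ satisfies \eqref{E3.5.1} the common value equals $\pp{(fgh)}$, and hence the other pair also satisfies \eqref{E3.5.1}. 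For part (3), expanding $\pp{(f(g+h))} = \pp{(fg)} + \pp{(fh)} + \Lambda_p(fg,fh)$ via the restricted-Lie axiom, comparing with the \eqref{E3.5.1}-prescription for $(f,g+h)$ (which uses $(g+h)^p = g^p + h^p$), and applying \eqref{E3.5.1} on $(f,g)$ and $(f,h)$ reduces the claim to the identity $\Phi_p(f,g+h) - \Phi_p(f,g) - \Phi_p(f,h) = \Lambda_p(fg,fh) - f^p\Lambda_p(g,h)$, which is exactly Lemma \ref{xxlem4.5}(3). Part (3$'$) follows from (3) because $\Phi_p(x,y) = \Phi_p(y,x)$ makes \eqref{E3.5.1} invariant under swapping its entries; then (4) and (4$'$) follow because $R_x$ (resp.\ $L_x$) is closed under scalars by (1) and under addition by (3) (resp.\ (3$'$)).

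Part (1) is the main obstacle. Unfolding \eqref{E3.5.1} for $(x,\lambda y)$ using $\pp{(\lambda xy)} = \lambda^p\pp{(xy)}$, $\pp{(\lambda y)} = \lambda^p\pp{y}$, $(\lambda y)^p = \lambda^p y^p$, and the assumed \eqref{E3.5.1} on $(x,y)$ reduces the claim to the universal polynomial identity $\Phi_p(x,\lambda y) = \lambda^p\Phi_p(x,y)$ (and its left-hand analogue). This is not apparent from the defining formula \eqref{E3.2.1} because $\Lambda_p$ is not homogeneous when only one of its arguments is scaled. The plan is to verify the identity in the free Poisson algebra $A = FP(\K x \oplus \K y)$: applying Proposition \ref{xxpro3.3} to $\Phi_p(x,\lambda y)$ and pulling scalars out of $\iota_2(\d(\lambda xy))$, $\iota_2(\d(\lambda y))$, and $\iota_1((\lambda y)^p)$ gives $\iota_2(\d\Phi_p(x,\lambda y)) = \lambda^p\,\iota_2(\d\Phi_p(x,y))$, so $\iota_2(\d(\Phi_p(x,\lambda y) - \lambda^p\Phi_p(x,y))) = 0$. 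By Lemma \ref{xxlem2.5}(2) this difference lies in $A^p$. At the same time, both $\Phi_p(x,y)$ and $\Phi_p(x,\lambda y)$ lie in $A^c$ because each is an $A^p$-combination of $\Lambda_p$-terms (Lemma \ref{xxlem2.6}(3)). Since $A^p \cap A^c = \{0\}$, the difference vanishes; the analogous argument handles $\Phi_p(\lambda x,y) = \lambda^p\Phi_p(x,y)$.
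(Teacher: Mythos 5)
Your proof is correct, and for parts (2), (3), (3$'$), (4), (4$'$) it is essentially identical to the paper's: the same expansion of $\pp{(fgh)}$ along the two factorizations reduced to Lemma \ref{xxlem4.5}(1), the same reduction of (3) to Lemma \ref{xxlem4.5}(3), and the same formal deductions for the primed and subspace statements. The only genuine divergence is in part (1), where the needed identity $\Phi_p(x,\lambda y)=\lambda^p\Phi_p(x,y)$ is the crux. The paper gets it in one line from Lemma \ref{xxlem4.5}(2) applied with $f=\lambda 1_A$: a scalar is Poisson-central, so $\Phi_p(x,\lambda y)=\Phi_p(x,(\lambda 1_A)y)=(\lambda 1_A)^p\Phi_p(x,y)$ (and Lemma \ref{xxlem4.5}(2) is itself a formal consequence of Lemma \ref{xxlem4.5}(1) together with Remark \ref{xxrem3.6}(6)(d)). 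You instead re-run the enveloping-algebra argument in the free Poisson algebra --- Proposition \ref{xxpro3.3} to pull out $\lambda^p$, Lemma \ref{xxlem2.5}(2) to conclude the difference lies in $A^p$, and Lemma \ref{xxlem2.6} plus $A^p\cap A^c=\{0\}$ to kill it. That argument is sound (it is exactly the technique used to prove Lemma \ref{xxlem4.5}(1) and (3)), but it reproves a special case of a lemma already at your disposal; recognizing that $\lambda 1_A$ is central lets you avoid the detour entirely. Your correct observation that the scaling of $\Phi_p$ is not visible from the defining formula \eqref{E3.2.1} is precisely why the paper routes it through the centrality lemma rather than through $\Lambda_p$.
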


\begin{proof}
(1) Assuming \eqref{E3.5.1}  for $(x,y)$, we have
$$\begin{aligned}
\pp{( x \lambda y)} &= \pp{\lambda(xy)}=\lambda^{p} \pp{(xy)}\\
&=\lambda^{p}( x^p \pp{y}+y^p\pp{x} +\Phi_p(x,y))\\
&= x^p \pp{(\lambda y)} +(\lambda y)^p \pp{x}+\lambda^{p}\Phi_p(x,y))\\
&= x^p \pp{(\lambda y)} +(\lambda y)^p \pp{x}+\Phi_p(x,\lambda y)),
\end{aligned}
$$
where the last equation is Lemma \ref{xxlem4.5}(2).
So $(x,\lambda y)$ satisfies \eqref{E3.5.1}. Similarly for
$(\lambda x,y)$.

(2) By symmetry, we only prove one implication and assume
that $(fg, h)$ satisfies \eqref{E3.5.1}. We
show next that $(f,gh)$ satisfies \eqref{E3.5.1}:
$$\begin{aligned}
\pp{(f(gh))} &=\pp{((fg)h)}=(fg)^p \pp{h}+h^p \pp{(fg)}
+\Phi_p(fg, h)\\
&=(fg)^p \pp{h}+h^p(f^p\pp{g}+g^p\pp{f}+\Phi_p(f,g))
+\Phi_p(fg,h)\\
&=f^pg^p\pp{h}+f^ph^p\pp{g}+g^ph^p\pp{f}
+\Phi_p(fg,h)+h^p\Phi_p(f,g)\\
&=f^pg^p\pp{h}+f^ph^p\pp{g}+g^ph^p\pp{f}\\
&\qquad\qquad\quad +f^p\Phi_p(g,h)+\Phi_p(fg,h)\qquad
{\text{by Lemma \ref{xxlem4.5}(1)}}\\
&=f^p(g^p\pp{h}+h^p\pp{g}+\Phi_p(g,h))+(gh)^p\pp{f}+\Phi_p(f,gh)\\
&=f^p \pp{(gh)}+(gh)^p\pp{f}+\Phi_p(f,gh).
\end{aligned}
$$

(3) Assume $(f,g)$ and $(f,h)$ satisfies \eqref{E3.5.1}. Then
\begin{align*}
\pp{(f(g+h))}=&\pp{(fg+fh)}\\
=& \pp{(fg)}+\pp{(fh)}+\Lambda_p(fg, fh)\\
=& f^p\pp{g}+g^p\pp{x}+\Phi_p(f, g)+x^p\pp{h}+h^p\pp{f}\\
& \qquad\qquad\qquad +\Phi_p(f, h)+\Lambda_p(fg, fh)\\
=& f^p(\pp{g}+\pp{h}+\Lambda_p(g, h))+(g+h)^p\pp{f}+\Phi_p(f, g+h)\\
=& f^p\pp{(g+h)}+(g+h)^p \pp{f}+\Phi_p(f, g+h), 
\end{align*}
where the second last equality is deduced from Lemma \ref{xxlem4.5}(3).
So $(f,g+h)$ satisfies \eqref{E3.5.1}.

(3') is equivalent to (3).

(4) Let $$R_x=\{y\in A \mid {\text{\eqref{E3.5.1}
holds for the pair $(x,y)$}}\}.$$
By Proposition \ref{xxpro4.6}(1), we have
\begin{enumerate}
\item[(i)]
if $y\in R_x$, then so is $\lambda y$ for all $\lambda\in \Bbbk$.
\end{enumerate}
By Proposition \ref{xxpro4.6}(3),
\begin{enumerate}
\item[(ii)]
if $g,h\in R_x$, then so is $g+h$.
\end{enumerate}
By (i) and (ii) above, $R_x$ is a $\Bbbk$-subspace of $A$.

(4') This is true because $L_x=R_x$.
\end{proof}

The following result will be used several times.

\begin{theorem}
\label{xxthm4.7}
Let $A$ be a weakly restricted Poisson algebra.
Let ${\bf b}:=\{b_i\}_{i\in S}$ be a $\Bbbk$-basis of $A$. If \eqref{E3.5.1}
holds for every pair $(x,y)\subseteq {\bf b}$, then $A$ is a restricted
Poisson algebra.
\end{theorem}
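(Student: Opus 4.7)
The plan is to lift \eqref{E3.5.1} from pairs of basis elements to arbitrary pairs of elements of $A$ by applying Proposition \ref{xxpro4.6} separately in each of the two arguments, then to invoke Proposition \ref{xxpro3.5}(2). The key observation is that the sets $R_x$ and $L_x$ from Proposition \ref{xxpro4.6}(4)(4') are $\K$-subspaces of $A$, so showing that they contain the basis $\mathbf{b}$ will force them to equal $A$.

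First I would fix $i\in S$ and consider
\[R_{b_i}=\{y\in A\mid (b_i,y) \text{ satisfies } \eqref{E3.5.1}\}.\]
By hypothesis $b_j\in R_{b_i}$ for every $j\in S$, and Proposition \ref{xxpro4.6}(4) tells us $R_{b_i}$ is a $\K$-subspace of $A$, so $R_{b_i}=A$. Letting $i$ vary, this proves that $(b_i,y)$ satisfies \eqref{E3.5.1} for every $i\in S$ and every $y\in A$.

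Next, for an arbitrary but fixed $y\in A$, I would consider
\[L_y=\{x\in A\mid (x,y) \text{ satisfies } \eqref{E3.5.1}\}.\]
The previous step, together with the symmetry $\Phi_p(x,y)=\Phi_p(y,x)$ recorded in Remark \ref{xxrem3.6}(6)(c) and the commutativity of the multiplication on $A$ (which together make \eqref{E3.5.1} invariant under swapping its two arguments), yields $b_i\in L_y$ for every $i\in S$. Proposition \ref{xxpro4.6}(4') then gives that $L_y$ is a $\K$-subspace, so $L_y=A$. Varying $y$, we conclude that \eqref{E3.5.1} holds for every pair $(x,y)\in A\times A$, and Proposition \ref{xxpro3.5}(2) guarantees that $A$ is a restricted Poisson algebra.

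There is no real obstacle once Propositions \ref{xxpro3.5} and \ref{xxpro4.6} are in hand; the argument is a two-step linear-extension. The only care required is the symmetry step bridging the first and second uses of Proposition \ref{xxpro4.6}, and this is already encoded in the identity $L_y=R_y$ used in the proof of Proposition \ref{xxpro4.6}(4').
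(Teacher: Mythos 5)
Your proposal is correct and follows essentially the same two-step linear-extension argument as the paper: first show $R_{b_i}=A$ for each basis element using Proposition \ref{xxpro4.6}(4), then show $L_y=A$ for each $y\in A$ using Proposition \ref{xxpro4.6}(4'), and conclude via Proposition \ref{xxpro3.5}(2). (The appeal to the symmetry of $\Phi_p$ in the second step is harmless but unnecessary, since $b_i\in L_y$ is by definition the same condition as $y\in R_{b_i}$.)
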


\begin{proof}
We need to show that \eqref{E3.5.1} holds for all $x,y\in A$.
First we fix any $x\in {\bf b}$ and let
$$R_x=\{y\in A \mid {\text{\eqref{E3.5.1}
holds for the pair $(x,y)$}}\}.$$
By Proposition \ref{xxpro4.6}(4), $R_x$ is a $\Bbbk$-subspace of
$A$. By hypothesis, we see that ${\bf b}\subseteq R_x$.
Since $\bf b$ is a basis of $A$, $R_x=A$.

Next we fix $y\in A$ and consider
$$L_y=\{x\in A \mid {\text{\eqref{E3.5.1}
holds for the pairs $(x,y)$}}\}.$$
Similarly, by Proposition \ref{xxpro4.6}(4'),
$L_y$ is a $\Bbbk$-subspace. It contains $\bf b$ because
$R_x=A$ for all $x\in {\bf b}$ (see the first paragraph).
Hence, $L_y=A$. This means that
\eqref{E3.5.1} holds for all pairs $(x,y)$ in $A$. Therefore
$A$ is a restricted Poisson algebra.
\end{proof}

One of the main goals of this paper is to provide some interesting
examples of restricted Poisson algebras. In the rest of this section
we give some elementary (but nontrivial) examples. We would like to
give a gentle warning before the examples. We have checked that
all $p$-maps given below satisfy \eqref{E3.5.1}, however our
proofs are tedious computations and therefore omitted. On the other
hand, since the $p$-maps are explicitly expressed by partial
derivatives, one can verify the assertions with enough patient.
More sophisticated examples are given in later sections.

\begin{example}
\label{xxex4.8}
Let $A=\K[x, y]$ be a polynomial algebra in two variables $x, y$,
where the (classical) Poisson bracket is given by
\begin{equation}
\label{E4.8.1}\tag{E4.8.1}
\{f, g\}=f_xg_y-f_yg_x.
\end{equation}
for all $f, g\in A$, and $f_x, f_y$ are the partial derivative
of $f$ with respect to the variables $x$ and $y$, respectively.
(The bracket defined in \eqref{E4.8.1} was the original Poisson
bracket studied by many people including Poisson \cite{Po} when
$\Bbbk={\mathbb R}$.)
\begin{enumerate}
\item[(1)]
Let $\K$ be a base field of characteristic $3$. For every
$f\in A$, we define
\begin{equation}
\label{E4.8.2}\tag{E4.8.2}
f^{\{3\}}=f_x^2f_{yy}+f_y^2f_{xx}+f_xf_yf_{xy},
\end{equation}
where $f_{xx}, f_{yy}$ and $f_{xy}$ are the second order
partial derivatives of $f$. Then $(A, \cdot, \{-, -\}, (-)^{\{3\}})$
is a restricted Poisson algebra.
\item[(2)]
Let $\K$ be a base field of characteristic $5$. For every $f\in A$, define
\begin{align}
f^{\{5\}}
=&f_1^4f_{2222}+f_1^3f_2f_{1222}+f_1^2f_2^2f_{1122}
  +f_1f_2^3f_{1112}+f_2^4 f_{1111}\notag\\
&+f_{12}(f_1^3f_{222}-f_1^2f_2f_{122}-f_1f_2^2f_{112}+f_2^3f_{111})\notag\\
&-f_1f_{22}(f_1^2f_{122}-2f_1f_2f_{112}+f_2^2f_{111})\label{E4.8.3}\tag{E4.8.3}\\
&-f_2f_{11}(f_2^2f_{112}-2f_2f_1f_{122}+f_1^2f_{222})\notag\\
&+2(f_{12}^2-f_{11}f_{22})(f_1^2f_{22}-2f_1f_2f_{12}+f_2^2f_{11}),\notag
\end{align}
where $f_{i_1i_2\cdots i_k}$ denotes the $k$-th order partial
derivative of $f$ with respect to the variables
$x_{i_1}, x_{i_2}, \cdots, x_{i_k}$. Then
$(A, \cdot, \{-, -\}, (-)^{\{5\}})$ is a restricted Poisson algebra.
\end{enumerate}

See Example \ref{xxex7.2} for general $p$.
It would be interesting to understand
the meaning of \eqref{E4.8.2} and \eqref{E4.8.3}
and to find its connection with other subjects.
\end{example}

%

The next two are slight generalizations of the previous example.

\begin{example}
\label{xxex4.9}
Suppose ${\text{char}}\; \K=3$ and let
$A=\K[x, y]$ be a polynomial Poisson algebra in two variables
$x, y$, where the Poisson bracket is given by
\[\{f, g\}=\varphi(f_xg_y-f_yg_x),\]
and $\varphi=\la x+\mu y+\nu$, $\la, \mu, \nu\in \K$.
For every $f\in A$, we define
\begin{equation}
\label{E4.9.1}\tag{E4.9.1}
\pk{f}=\la \varphi f_xf_y^2+\mu\varphi f_x^2f_y+
\varphi^2(f_x^2f_{yy}+f_y^2f_{xx}+f_xf_yf_{xy})+
\la^2yf_y^3+\mu^2xf_x^3.
\end{equation}
Then $(A, \cdot, \{-, -\}, \pk{(-)})$ is a restricted Poisson algebra.
\end{example}

\begin{example}
\label{xxex4.10}
Suppose ${\text{char}}\; \K=3$ and let
$A=\K[x_1, x_2, \cdots, x_n]$ be a Poisson algebra,
where the Lie bracket is given by
$\{x_i, x_j\}=2c_{ij}\in \K $ with $c_{ij}+c_{ji}=0$
for $1\le i, j\le n$.
Clearly, $\{f, g\}=\sum_{1\le i, j\le n}c_{ij}(f_ig_j-f_jg_i)$
for $f, g\in A$,
where $f_i$ denotes the partial derivative of $f$ with
respect to the variable $x_i$ for $i=1, 2, \cdots, n$.
Then $A$ is a restricted Poisson algebra with the $p$-map given by
\[f^{\{3\}}=\Sum_{1\le i,j,k,l\le n}c_{ij}c_{kl}f_if_kf_{jl}\]
for any $f\in A$, where $f_{jl}$ is the second partial derivation
of $f$ with respect to the variables $x_j$ and $x_l$.
\end{example}

\section{Existence and uniqueness of restricted structures}
\label{xxsec5}
By Lemma \ref{xxlem3.2}(2), a weakly restricted Poisson structure
on a Poisson algebra is very close to a restricted Poisson structure
(up to a factor in the Poisson center). In this section, we study
the existence and uniqueness of (weakly) restricted Poisson structure.
First we consider the trivial extension.

\begin{lemma}
\label{xxlem5.1}
Let $A$ be a Poisson algebra and $A=\Bbbk 1_A \oplus \fm$ as a
Lie algebra decomposition.
\begin{enumerate}
\item[(1)]
If $x\mapsto \pp{x}$ {is a  restriction} $p$-map of the Lie
algebra $\fm$, then it can naturally be extended to $A$ by
defining $\pp{1_A}=0$. As a consequence, $A$ is a
weakly restricted Poisson algebra.
\item[(2)]
If, further, the $p$-map on $\fm$ satisfies  \eqref{E3.4.1},
then so does the extended $p$-map on $A$. In this case,
$A$ is a restricted Poisson algebra.
\end{enumerate}
\end{lemma}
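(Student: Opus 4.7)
The plan is to extend the $p$-map to all of $A$ by setting $\pp{(\lambda 1_A + m)} := \pp{m}$ for $\lambda \in \K$, $m \in \fm$, and then verify the restricted Lie axioms and the compatibility \eqref{E3.4.1}, exploiting the fact that $1_A$ is Poisson-central (which follows immediately from the Leibniz rule applied to $1_A = 1_A \cdot 1_A$).

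For part (1), since $\{1_A, -\} \equiv 0$, for any $a = \lambda 1_A + m$ I have $\ad_a = \ad_m$ as operators on $A$; this reduces axiom (1) of Definition \ref{xxdef1.1} for $A$ to the corresponding axiom for $\fm$, while axiom (2) is trivially inherited by scalar extension. For axiom (3), writing $b = \mu 1_A + n$, the key observation is that every nested bracket $[x_1, \dots, x_{p-1}, x_p]$ appearing in the expansion \eqref{E1.1.3} of $\Lambda_p(a,b)$ has each entry $x_i \in \{a, b\}$; an easy induction on depth (using $[\lambda 1_A, -] = [\mu 1_A, -] = 0$) shows that each such bracket equals the corresponding bracket with $a,b$ replaced by $m,n$. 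Summing gives $\Lambda_p(a,b) = \Lambda_p(m,n)$, whence axiom (3) for $A$ reduces to the same axiom for $\fm$.

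For part (2), I fix $a = \lambda 1_A + m$ and verify \eqref{E3.4.1} directly. In the commutative ring $A$ one has $a^2 = \lambda^2 1_A + 2\lambda m + m^2$, whose $\fm$-component is $2\lambda m + m^2$. Since $\{m, m^2\} = 2m\{m,m\} = 0$, Remark \ref{xxrem3.6}(6)(b) gives $\Lambda_p(2\lambda m, m^2) = 0$, so additivity of the $p$-map together with the hypothesis $\pp{(m^2)} = 2 m^p \pp{m}$ yields
\[
\pp{(a^2)} = (2\lambda)^p \pp{m} + 2 m^p \pp{m} = 2\lambda^p \pp{m} + 2 m^p \pp{m},
\]
using Fermat's little theorem $2^p = 2$ in $\K$. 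On the other hand, $a^p = \lambda^p + m^p$ by the Frobenius identity in commutative characteristic $p$, so $2 a^p \pp{a} = 2(\lambda^p + m^p)\pp{m}$, matching the above.

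There is no serious obstacle: the entire argument amounts to recognizing that $1_A$ is Poisson-central (killing all $\Lambda_p$ and bracket contributions from the scalar part) and then applying the standard identities $2^p = 2$ and $(\lambda 1_A + m)^p = \lambda^p + m^p$ of characteristic $p$. The only bookkeeping worth care is the identity $\Lambda_p(a,b) = \Lambda_p(m,n)$ used in verifying the additivity axiom in part (1).
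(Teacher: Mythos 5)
Your argument is correct, and part (2) is essentially the paper's own computation: drop the scalar summand, split $\pp{(2\lambda m + m^2)}$ using additivity and $\Lambda_p(2\lambda m,m^2)=0$ (since $\{m,m^2\}=0$), then apply $2^p=2$ and $(\lambda 1_A+m)^p=\lambda^p 1_A+m^p$. The only genuine difference is in part (1): the paper simply invokes Jacobson's criterion (Lemma \ref{xxlem1.3}) to get existence and uniqueness of the extended restricted structure with $\pp{1_A}=0$, whereas you verify the three axioms of Definition \ref{xxdef1.1} directly, the key point being that centrality of $1_A$ forces $\ad_a=\ad_m$ and $\Lambda_p(a,b)=\Lambda_p(m,n)$. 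Your route is more self-contained and actually makes explicit why the extended $p$-map is given by the stated formula, something the appeal to Lemma \ref{xxlem1.3} leaves implicit; the paper's route is shorter. One small point to tidy: since $A=\Bbbk 1_A\oplus\fm$ is only a \emph{Lie} algebra decomposition, $\fm$ need not be closed under the associative product, so $m^2$ may have a nonzero $\Bbbk 1_A$-component and the ``$\fm$-component of $a^2$'' need not literally be $2\lambda m+m^2$. This does not affect your computation, because discarding $\lambda^2 1_A$ is justified by additivity together with $\pp{(\lambda^2 1_A)}=0$ and $\Lambda_p(\lambda^2 1_A,-)=0$, not by a projection onto $\fm$; you should phrase it that way.
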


\begin{proof}
(1) This follows from Lemma \ref{xxlem1.3}. For all
$\lambda\in \Bbbk$ and $x\in \fm$, the $p$-map is defined
by $\pp{(\lambda 1_A+x)}=\pp{x}$.

(2) We check \eqref{E3.4.1} for elements in $A$ as follows:
$$\begin{aligned}
\pp{((\lambda 1_A+x)^2)}&= \pp{(\lambda^2 1_A+ 2\lambda x+x^2)}\\
&=\pp{(2\lambda x+x^2)}=\pp{(2\lambda x)}+\pp{(x^2)}\\
&=2\lambda^p \pp{x}+2 x^p\pp{x}=2(\lambda 1_A+x)^p \pp{x}\\
&=2(\lambda 1_A+x)^p \pp{(\lambda 1_A+x)}.
\end{aligned}
$$
Therefore $A$ is a restricted Poisson algebra.
\end{proof}

The following example is immediate.

\begin{example}
\label{xxex5.2}
\begin{enumerate}
\item[(1)]
Let $L$ be a restricted Lie algebra and let $A=\Bbbk 1_A\oplus L$
where associate product $L^2=0$. Then $A$ is a Poisson algebra
in the obvious way. Both sides of \eqref{E3.4.1} are zero for
elements in $L$ (since $L^2=0$). By Lemma \ref{xxlem5.1}(2),
$A$ is a restricted Poisson algebra.
\item[(2)]
Considering a special case when
$L=\Bbbk x+\Bbbk y$ is a solvable Lie algebra
with $[x, y]=x$. For $f=\lambda_1x+\lambda_2y\in L$,
we define the $p$-map by
\[\pp{f}=\lambda_2^{p-1}(\lambda_1x+\lambda_2y).\]
It is straightforward to check that $(L, \pp{(-)})$
is a restricted Lie algebra.
Let $A=\Bbbk 1_A \oplus L$. Then, by part (1), $A$ is a
restricted Poisson algebra. As a commutative algebra,
$A=\frac{\K[x, y]}{(x^2, xy, y^2)}$ with $\K$-linear basis
$\{1, x, y\}$. The Poisson bracket is given by $\{x, y\}=x$.
\end{enumerate}
\end{example}

Let $L$ be a restricted Lie algebra. It is well known
that the $p$-map of $L$ is unique up to a semilinear map
from $L$ to $Z(L)$, where $Z(L)$ is the center of $L$.
Recall that a semilinear map $\gamma\colon L\to Z(L)$
means that for any $x, y\in A, \la \in \K$,
\begin{align*}
\gamma(x+y) & = \gamma(x)+\gamma(y),\\
\gamma(\la x) &= \la^p \gamma(x).
\end{align*}

The following lemma is well-known and easy to prove.

\begin{lemma}\label{xxlem5.3}
Let $(L, (-)^{[p]})$ be a restricted Lie algebra.
\begin{enumerate}
\item[(1)]
Let $\pp{(-)}$ be another restricted Lie structure
on $L$. Then there is a maps $\gamma: L\to Z(L)$ such that
$\pp{(-)}=(-)^{[p]}+\gamma$.
\item[(2)]
Let $\gamma$ be a map from $L$ to $Z(L)$. Then
$(-)^{[p]}+\gamma$ is a restricted Lie structure on
$L$ if and only if $\gamma$ is a semilinear map from
$L$ to $Z(L)$.
\end{enumerate}
\end{lemma}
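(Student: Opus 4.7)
The plan is to exploit the fact that the three axioms of a $p$-map are ``almost'' additive in the $p$-th power symbol, so the difference of two $p$-maps must be a map whose values land in the kernel of $\ad$, i.e.\ in $Z(L)$, and whose deviation from linearity/Frobenius-semilinearity is governed only by $\Lambda_p(x,y)$, which cancels out when we subtract.

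For part (1), I would set $\gamma(x) := \pp{x} - x^{[p]}$ and check that $\gamma$ takes values in $Z(L)$. Since both $(-)^{[p]}$ and $\pp{(-)}$ satisfy Definition \ref{xxdef1.1}(1), we have $\ad_{\pp{x}} = \ad_x^p = \ad_{x^{[p]}}$, so $\ad_{\gamma(x)} = \ad_{\pp{x}} - \ad_{x^{[p]}} = 0$. Hence $\gamma(x)\in Z(L)$ for every $x\in L$, giving the desired map $\gamma\colon L\to Z(L)$.

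For part (2), write $\sigma(x) := x^{[p]} + \gamma(x)$ and verify conditions (1)--(3) of Definition \ref{xxdef1.1} one by one. Condition (1) is automatic for any $\gamma$ with values in $Z(L)$: $\ad_{\sigma(x)} = \ad_{x^{[p]}} + \ad_{\gamma(x)} = \ad_x^p$. Condition (2), $\sigma(\la x) = \la^p \sigma(x)$, reduces via $(\la x)^{[p]} = \la^p x^{[p]}$ to the requirement $\gamma(\la x) = \la^p \gamma(x)$. Condition (3), $\sigma(x+y) = \sigma(x) + \sigma(y) + \Lambda_p(x,y)$, reduces via $(x+y)^{[p]} = x^{[p]} + y^{[p]} + \Lambda_p(x,y)$ to $\gamma(x+y) = \gamma(x) + \gamma(y)$, since $\Lambda_p(x,y)$ is the same polynomial in $\ad$'s regardless of which $p$-map we use. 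Putting the two reductions together shows $\sigma$ is a $p$-map exactly when $\gamma$ is Frobenius-semilinear, which proves the ``if and only if''.

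No step is really a technical obstacle; the only mild subtlety is noticing that $\Lambda_p(x,y)$ depends only on the Lie bracket (it is a universal Lie polynomial in $x,y$), so it is insensitive to the choice of $p$-map and genuinely cancels in the subtraction. Once that is observed, both halves of the lemma are immediate from the definitions.
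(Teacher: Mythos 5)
Your argument is correct and is exactly the standard verification; the paper itself omits the proof, declaring the lemma ``well-known and easy to prove,'' so there is nothing to diverge from. The one point that needed care --- that $\Lambda_p(x,y)$ is a universal Lie polynomial in $x,y$ depending only on the bracket, so it cancels when comparing the two $p$-maps --- is exactly the point you flag, and both directions of part (2) follow as you describe.
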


Let $A$ be a Poisson algebra over $\K$ and $Z(A)$ the
center of $A$. Observe that $Z(A)$
is a left $A$-module with the action given by
$$A\times Z(A) \to Z(A),\quad (a, z)\mapsto a^pz.$$
A semilinear map $\psi\colon A\to Z(A)$ is called
a \emph{Frobenius derivation} of $A$ with the
value in $Z(A)$ provided that
$\psi(ab)=a^p\psi(b)+b^p\psi(a)$ for any $a, b\in A$.
For example, if $\psi_0\colon A\to A$ is
a derivation, then $\psi\colon A\to Z(A)$, defined by
$\psi(a)=(\psi_0(a))^p$ for all $a\in A$,
is a Frobenius derivation of $A$ with the value in $Z(A)$.

By Lemma \ref{xxlem5.3}(1), any two restricted Poisson structures
on $A$ differ by a semilinear map $\gamma$ which appears in the
next proposition.

\begin{proposition}
\label{xxpro5.4}
Let $(A, \cdot, \{-,-\}, (-)^{\{p\}})$ be a restricted
Poisson algebra and $\gamma$ a map from $A$
to itself. Then the map $(-)^{\{p\}}+\gamma$ is a restricted
Poisson structure if and only if $\gamma$ is a Frobenius
derivation of $A$ with value in $Z(A)$.
\end{proposition}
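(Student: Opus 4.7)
The plan is to split the equivalence into a restricted-Lie layer and a multiplicative layer using Lemma \ref{xxlem5.3}, and then to upgrade the single quadratic identity coming from \eqref{E3.4.1} to the full Frobenius derivation identity by a polarization argument in the spirit of Proposition \ref{xxpro3.5}(2).

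First I would observe that, since $A$ is commutative, $Z(A)$ coincides with the center of the Lie algebra $(A,\{-,-\})$. Hence Lemma \ref{xxlem5.3} tells us that $\pp{(-)}+\gamma$ is a restricted Lie structure on $(A,\{-,-\})$ if and only if $\gamma$ is a semilinear map $A\to Z(A)$. Both implications of the proposition then reduce to checking \eqref{E3.4.1} for $\pp{(-)}+\gamma$; subtracting the identity $\pp{(x^2)}=2x^p\pp{x}$ collapses the Poisson compatibility to the single equation
\[
\gamma(x^2)=2x^p\,\gamma(x),\qquad \forall\,x\in A.
\]

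For the direction $(\Leftarrow)$, this equation is immediate by setting $a=b=x$ in $\gamma(ab)=a^p\gamma(b)+b^p\gamma(a)$, while semilinearity is built into the definition of a Frobenius derivation. The substantive direction is $(\Rightarrow)$: given semilinearity and the displayed identity, I would derive the full Leibniz-type rule by polarizing $\gamma((x+y)^2)$. On one side, the displayed identity together with $(x+y)^p=x^p+y^p$ (which holds since $A$ is commutative of characteristic $p$) gives $2(x^p+y^p)(\gamma(x)+\gamma(y))$. On the other side, expanding $(x+y)^2=x^2+2xy+y^2$ and using additivity of $\gamma$ plus the displayed identity on $x^2$ and $y^2$ gives $2x^p\gamma(x)+2^p\gamma(xy)+2y^p\gamma(y)$. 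Equating, using $2^p=2$ in $\K$ (Fermat's little theorem, valid since $p\geq 3$), and cancelling the invertible factor $2$, one obtains the Frobenius derivation identity $\gamma(xy)=x^p\gamma(y)+y^p\gamma(x)$.

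The main obstacle is more conceptual than technical: one has to recognize that the entire Poisson content of the proposition beyond the restricted Lie structure reduces to the single quadratic identity $\gamma(x^2)=2x^p\gamma(x)$. Once this is seen, the polarization is just the same template used in Proposition \ref{xxpro3.5}(2), and the Lie $\Lambda_p$-corrections that appear there are absent here because $\gamma$ is merely additive.
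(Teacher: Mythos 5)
Your proof is correct, and it shares the same skeleton as the paper's (Lemma \ref{xxlem5.3} disposes of the restricted-Lie layer in both), but you handle the multiplicative layer by a different route. The paper never isolates the quadratic identity $\gamma(x^2)=2x^p\gamma(x)$: it invokes Proposition \ref{xxpro3.5}(2) to replace \eqref{E3.4.1} by the product formula \eqref{E3.5.1} for \emph{both} $p$-maps and simply subtracts, so the $\Phi_p(x,y)$ terms cancel and the Frobenius identity $\gamma(xy)=x^p\gamma(y)+y^p\gamma(x)$ falls out in one line; the converse is the same computation run backwards. You instead stay with \eqref{E3.4.1}, reduce everything to $\gamma(x^2)=2x^p\gamma(x)$, and recover the product rule by polarizing $\gamma((x+y)^2)$ directly, using $2^p=2$ and the additivity of $\gamma$. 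Your polarization is the same computation that proves Proposition \ref{xxpro3.5}(2), only applied to $\gamma$ where it is cleaner because the $\Lambda_p$-corrections are absent; so your argument is marginally more self-contained (it does not need $\Phi_p$ at all), while the paper's is shorter because it reuses Proposition \ref{xxpro3.5}(2) as a black box. One small point worth making explicit in your write-up: in the forward direction, before Lemma \ref{xxlem5.3} can be quoted you should note that the difference of two $p$-maps automatically lands in $Z(A)$ (since $\ad_{\gamma(x)}=\ad_x^p-\ad_x^p=0$) and is semilinear (the $\Lambda_p$ terms in Definition \ref{xxdef1.1}(3) cancel); this is exactly what Lemma \ref{xxlem5.3}(1) packages, and your appeal to ``Lemma \ref{xxlem5.3}'' for the if-and-only-if implicitly combines its two parts.
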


\begin{proof}
Let $(-)^{\{p\}_1}\colon A\to A$ be another $p$-map such that
$(A, \cdot, \{-, -\}, (-)^{\{p\}_1})$ is also a
restricted Poisson algebra.
Since $(-)^{\{p\}_1}$ and $\pp{(-)}$ are restricted structures
on Lie algebra $(A, \{-,-\})$,
$\gamma=(-)^{\{p\}_1}-\pp{(-)}$ is a semilinear map from
$A$ to $Z(A)$ by Lemma \ref{xxlem5.3}.  Moreover, for any $x, y\in A$,
$(xy)^{\{p\}_1}=x^py^{\{p\}_1}+y^px^{\{p\}_1}+\Phi_p(x, y)$,
and
\begin{align*}
\gamma(xy)= & (xy)^{\{p\}_1}-\pp{(xy)}\\
= & x^p (y^{\{p\}_1}-\pp{y})+y^p (x^{\{p\}_1}-\pp{x})\\
= & x^p\gamma(y)+y^p\gamma(x)\\
\end{align*}
It follows that $\gamma$ is a Frobenius derivation of $A$
with values in $Z(A)$.

Conversely, it follows from Lemma \ref{xxlem5.3} that
the map $\pp{(-)}+\gamma$ is also a restricted Lie
structure on $(A, \{-, -\})$, since $\gamma$ is a
semilinear map from $A$ to $Z(A)$ and $\pp{(-)}$ is a
$p$-map of Lie algebra $(A, \{-, -\})$.
Moreover, for any $x, y\in A$,
\[\pp{(xy)}+\gamma(xy)
=x^p (\pp{y}+\gamma(y))+y^p(\pp{x}+\gamma(x))+\Phi_p(x, y).\]
It follows that the Poisson algebra $A$
together with the map $\pp{(-)}+\gamma$ is a restricted structure.
\end{proof}

By Proposition \ref{xxpro5.4}, the $p$-map of a restricted Poisson algebra
is unique up to Frobenius derivations.

\begin{remark}
\label{xxrem5.5} Let $(A, \cdot, \{-,-\}, \pp{(-)})$ be a restricted
Poisson algebra and let $\gamma: A\to Z(A)$ be a semilinear map.
Suppose that $\gamma$ is not a Frobenius derivation (which is possible
for many $A$) and defines a new $p$-map $\pp{(-)'}=\pp{(-)}+\gamma$.
Then by Proposition \ref{xxpro5.4}, $(A, \cdot, \{-,-\}, \pp{(-)'})$
is not a restricted Poisson algebra, but it is still a weakly
restricted Poisson algebra by Lemma \ref{xxlem5.3}(2).
\end{remark}

\section{Restricted Poisson algebras from restricted Lie algebras}
\label{xxsec6}

We start with a general result.

\begin{theorem}
\label{xxthm6.1}
Let $A=\K[x_i \mid i\in T]$ be a polynomial Poisson algebra
with an index set $T$.
If for each $i\in T$, there exists $\gamma(x_i)\in A$
such that $\ad_{x_i}^p=\ad_{\gamma(x_i)}$, then $A$ admits
a restricted Poisson structure $\pp{(-)}$
such that $\pp{x_i}=\gamma(x_i)$ for all $i\in T$.
\end{theorem}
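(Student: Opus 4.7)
The plan is to define the $p$-map on a standard monomial basis of $A$ via a recursive formula forced by \eqref{E3.5.1}, to verify its well-definedness using Lemma \ref{xxlem4.5}(1) together with the symmetry of $\Phi_p$, to invoke Jacobson's Lemma \ref{xxlem1.3} to extend this data to a restricted Lie structure on the Lie algebra $(A,\{-,-\})$, and finally to apply Theorem \ref{xxthm4.7} to promote the resulting weakly restricted Poisson algebra to a restricted one.

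Fix a total order on $T$ and let $\mathcal M\subset A$ be the standard monomial basis consisting of $1$ together with all products $x_{i_1}^{a_1}\cdots x_{i_k}^{a_k}$ with $i_1<\cdots<i_k$ and $a_j\ge 1$. We define $\gamma\colon \mathcal M\to A$ by induction on total degree: set $\gamma(1)=0$ and take $\gamma(x_i)$ to be the element supplied by the hypothesis; for $m\in\mathcal M$ of degree $\geq 2$, choose any factorization $m=m_1 m_2$ with $m_1,m_2\in\mathcal M$ of strictly smaller degree and declare
\[
\gamma(m)\;=\;m_1^p\,\gamma(m_2)+m_2^p\,\gamma(m_1)+\Phi_p(m_1,m_2).
\]
The main subtlety, and the main obstacle of the proof, is the well-definedness of this recursion, i.e.\ its independence of the chosen factorization tree. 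Any two factorization trees of $m$ can be connected by a sequence of commutativity and associativity moves at internal nodes; the commutativity of the recursive rule reduces to $\Phi_p(m_1,m_2)=\Phi_p(m_2,m_1)$, which is Remark \ref{xxrem3.6}(6)(c), while the associativity of the recursive rule reduces, after direct expansion of $\gamma(m_1(m_2m_3))$ and $\gamma((m_1m_2)m_3)$, precisely to the identity
\[
m_1^p\Phi_p(m_2,m_3)-\Phi_p(m_1m_2,m_3)+\Phi_p(m_1,m_2m_3)-m_3^p\Phi_p(m_1,m_2)=0,
\]
which is Lemma \ref{xxlem4.5}(1). Hence $\gamma$ is well-defined on $\mathcal M$.

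A straightforward induction on total degree then yields $\ad_m^p=\ad_{\gamma(m)}$ for every $m\in\mathcal M$: the base case $m=x_i$ is the hypothesis, and the inductive step, with $m=m_1m_2$ and $\ad_{m_i}^p=\ad_{\gamma(m_i)}$ by induction, is exactly Lemma \ref{xxlem3.2}(1). Jacobson's Lemma \ref{xxlem1.3} applied to the basis $\mathcal M$ now produces a unique restricted Lie structure $\pp{(-)}$ on $(A,\{-,-\})$ with $\pp{m}=\gamma(m)$ for all $m\in\mathcal M$, making $A$ a weakly restricted Poisson algebra. By construction, for every pair $m_1,m_2\in\mathcal M$ the product $m_1m_2$ lies in $\mathcal M$ and
\[
\pp{(m_1 m_2)}\;=\;m_1^p\pp{m_2}+m_2^p\pp{m_1}+\Phi_p(m_1,m_2),
\]
so \eqref{E3.5.1} holds on the basis $\mathcal M$. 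Theorem \ref{xxthm4.7} then upgrades $A$ to a restricted Poisson algebra, and by construction $\pp{x_i}=\gamma(x_i)$ for all $i\in T$.
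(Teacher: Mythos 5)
Your proof is correct and takes essentially the same route as the paper's: both define the $p$-map on the monomial basis by the recursion forced by \eqref{E3.5.1}, use Lemma \ref{xxlem3.2}(1) and Jacobson's Lemma \ref{xxlem1.3} to obtain the weakly restricted structure, rely on Lemma \ref{xxlem4.5}(1) as the key cocycle identity, and conclude via Theorem \ref{xxthm4.7}. The only organizational difference is that the paper sidesteps well-definedness by fixing the canonical factorization that peels off the smallest-index variable and then proves \eqref{E3.5.1} for all basis pairs afterwards by induction using Proposition \ref{xxpro4.6}(2), whereas you establish factorization-independence of the recursion up front (the same computation, done in the opposite order); the one small point to add is that pairs involving $1_A$ are not literally covered ``by construction'' by your recursion and should be handled separately via Proposition \ref{xxpro3.5}(4) and Remark \ref{xxrem3.6}(6)(d).
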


\begin{proof} First we show that $A$ has a weakly
restricted Poisson structure, and then
verify that the weakly restricted Poisson structure
satisfies \eqref{E3.5.1}.

For the sake of simplicity, we assume
that $T=\{1,2,\cdots,n\}$.
To apply Lemma \ref{xxlem1.3}, we choose a canonical
monomial $\K$-basis of $A$, which is
$$\{x_1^{i_1}x_2^{i_2}\cdots x_n^{i_n}\mid i_1, i_2, \cdots,  i_n \ge 0\}.$$
We define $\pp{(x_1^{i_1}x_2^{i_2}\cdots x_n^{i_n})}$ inductively on
the degree $i_1+i_2+\cdots+i_n$ such that
$$\ad_{(x_1^{i_1}x_2^{i_2}\cdots x_n^{i_n})}^p=
\ad_{\pp{(x_1^{i_1}x_2^{i_2}\cdots x_n^{i_n})}},$$
and therefore get the restricted
Lie structure on $(A, \{-, -\})$ by Lemma \ref{xxlem1.3}. For
convenience, we denote $x^I=x_1^{i_1}x_2^{i_2}\cdots x_n^{i_n}$ and
$|I|=i_1+\cdots+i_n$ for $I=(i_1, \cdots, i_n)$.

If $|I|=0$, then $x^I=1$, we define $\pp{1}=0$ and if $|I|=1$,
then $x^I=x_i$ for some $1\le i\le n$. We define
$\pp{x_i}=\gamma(x_i)$ for each $1\le i\le n$. By hypothesis,
$\ad_{x^I}^p=\ad_{\pp{(x^I)}}$ for any $I$ with $|I|=0, 1$.

Proceeding by induction and assuming that $\pp{(x^I)}$ has been
defined such that
$\ad_{x^I}^p=\ad_{\pp{(x^I)}}$ for any $x^I$ with $|I|\le m$.
For each monomial $x^I$ of degree $m+1$, we assume that $k$ is
the smallest subscript such that $i_k\ge 1$ in $I$, i.e.
$I=(0, \cdots, 0, i_k, \cdots, i_{n})$ and define
\begin{align}
\label{E6.1.1}\tag{E6.1.1}
\pp{(x^I)}=& x_k^p\pp{(x_k^{i_k-1}x_{k+1}^{i_{k+1}}\cdots x_n^{i_n})}+
(x_k^{i_k-1}x_{k+1}^{i_{k+1}}\cdots x_n^{i_n})^p \pp{x_k}\\
\notag
&+\Phi_p(x_k, x_k^{i_1-1}x_{k+1}^{i_{k+1}}\cdots x_n^{i_n}).
\end{align}
By Lemma \ref{xxlem3.2}(1) for
$(x,y)=(x_k, x_k^{i_k-1}x_{k+1}^{i_{k+1}}\cdots x_n^{i_n})$
and the above definition, we have $\ad_{x^I}^p=\ad_{\pp{(x^I)}}$
for any $|I|=m+1$, which completes the induction. By
Lemma \ref{xxlem1.3}, $A$ has a weakly restricted Poisson structure.

Now let ${\bf b}$ be the set of all monomials, which is a
$\K$-basis of $A$. We prove that \eqref{E3.5.1} holds for
any pair of elements $(x, y)$ in ${\bf b}$ by induction on
$\deg x + \deg y$. If $x$ or $y$ is 1, then
\eqref{E3.5.1} holds trivially, which also takes care of
the case when $m := \deg x + \deg y\leq 1$.
Suppose that the assertion holds for $m$ and now assume
that $\deg x + \deg y = m + 1$. Let $xy = x_k^{i_k}
x_{k+1}^{i_{k+1}}\cdots x_{n}^{i_n}$ where $i_k > 0$.
By \eqref{E6.1.1}, the pair $(x_{k}, x_k^{i_k-1}
x_{k+1}^{i_{k+1}}\cdots x_{n}^{i_n})$ satisfies \eqref{E3.5.1}.
By symmetry, we may assume that $x=x_k g$. Then
the above says that the pair $(x_k, gy)$ satisfies
\eqref{E3.5.1}. By induction hypothesis, the pairs
$(x_k, g)$ and $(g,y)$ satisfy \eqref{E3.5.1}.
By Proposition \ref{xxpro4.6}(2), $(x,y)=(x_k g, y)$
satisfies \eqref{E3.5.1}. By induction, \eqref{E3.5.1}
holds for any two elements in ${\bf b}$. Finally the main
statement follows from Theorem \ref{xxthm4.7}.
\end{proof}

As a consequence, we have the following.

\begin{example}
\label{xxex6.2} Let $L$ be a restricted Lie algebra. We claim that
the polynomial Poisson algebra $A:=\K[L]$ (also denoted by $S(L)$)
is a restricted Poisson algebra. Let $\{x_i\}_{i\in I}$ be a basis
of $L$. Then, for each $i$, there is an $\gamma(x_i):=
x_i^{[p]}\in L$ such  that $\ad_{x_i}^p =\ad_{\gamma(x_i)}$
when restricted to $L$. Since $A$ is a polynomial ring
over $L$, both $\ad_{x_i}^p$ and $\ad_{\gamma(x_i)}$ extends
uniquely to derivations of $A$. Thus $\ad_{x_i}^p =
\ad_{\gamma(x_i)}$ holds when applying to $A$. The claim follows
from Theorem \ref{xxthm6.1} and there is a unique
restricted structure $\pp{(-)}$ on $A$ such that
$$\pp{x}=x ^{[p]}, \forall \; x\in L.$$
\end{example}

Let $V$ be a vector space. Then the free restricted Lie algebra $RLie(V)$
can be defined by using the universal property or by taking the
restricted Lie subalgebra of the free associative algebra generated
by $V$ with the $p$-map being the $p$-powering map. Now we can define
the free restricted Poisson algebra generated by $V$.

\begin{definition}
\label{xxdef6.3} Let $V$ be a $\K$-space. The free restricted Poisson
algebra generated by $V$ is defined to be
$$FRP(V)=\K[RLie(V)].$$
\end{definition}

The following universal property is standard
\cite[Lemma 1, p. 312]{Sh}.

\begin{lemma}
\label{xxlem6.4}
Let $A$ be a restricted Poisson algebra and $V$ be a vector space. Every
$\K$-linear map $g: V\to A$ extends uniquely to a restricted Poisson algebra
morphism $G: FRP(V)\to A$ such that $g$ factors through
$G$.
\end{lemma}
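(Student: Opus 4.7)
The plan is to construct $G$ in two stages: first apply the universal property of the free restricted Lie algebra, then extend along the polynomial (symmetric) algebra structure, and finally verify the two compatibilities (Poisson bracket and $p$-map). Regarding the base setup, view $A$ as a restricted Lie algebra via $(\{-,-\},\pp{(-)})$. By the universal property of $RLie(V)$, the $\K$-linear map $g\colon V\to A$ extends uniquely to a restricted Lie algebra homomorphism $\tilde g\colon RLie(V)\to A$. Since $FRP(V)=\K[RLie(V)]$ is by definition the symmetric algebra on the $\K$-space $RLie(V)$, the underlying linear map of $\tilde g$ extends uniquely to a homomorphism of commutative $\K$-algebras $G\colon FRP(V)\to A$.

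Next I check that $G$ respects the Poisson bracket. For $x,y\in RLie(V)\subseteq FRP(V)$ the bracket $\{x,y\}$ in $FRP(V)$ is, by construction, the Lie bracket $[x,y]$ of $RLie(V)$, so
\[G(\{x,y\})=\tilde g([x,y])=[\tilde g(x),\tilde g(y)]=\{G(x),G(y)\}.\]
The Leibniz rule (satisfied on both sides) and bilinearity of $\{-,-\}$ then propagate bracket-preservation from $RLie(V)$ to every monomial, hence to all of $FRP(V)$; this is precisely the mechanism underlying Lemma \ref{xxlem2.4}.

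The main obstacle is compatibility with the $p$-map. By Example \ref{xxex6.2} applied to the restricted Lie algebra $RLie(V)$, the restricted Poisson $p$-map $\pp{(-)}$ of $FRP(V)$ extends the given restricted Lie $p$-map of $RLie(V)$, so for every $x\in RLie(V)$,
\[G(\pp{x})=\tilde g(x^{[p]})=(\tilde g(x))^{\{p\}}=\pp{(G(x))}.\]
To promote this to all of $FRP(V)$, I invoke \eqref{E3.5.1}, valid in both $FRP(V)$ and $A$:
\[\pp{(xy)}=x^p\pp{y}+y^p\pp{x}+\Phi_p(x,y).\]
Since $G$ is a morphism of commutative Poisson algebras and $\Phi_p(x,y)$ (respectively $\Lambda_p(x,y)$) is built from iterated Poisson brackets and products of $x,y$, the elements $\Phi_p(x,y)$ and $\Lambda_p(x,y)$ are preserved by $G$. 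By induction on the number of factors together with Proposition \ref{xxpro4.6} and Theorem \ref{xxthm4.7}, the identity $G\circ\pp{(-)}=\pp{(-)}\circ G$ spreads from the generators $RLie(V)$ to all of $FRP(V)$.

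Uniqueness is immediate: a restricted Poisson morphism $G'$ extending $g$ must agree with $\tilde g$ on $RLie(V)$ by uniqueness in the universal property of the free restricted Lie algebra, and must then agree with $G$ on $FRP(V)=\K[RLie(V)]$ because a commutative algebra morphism is determined by its values on algebra generators. The one delicate point is the base case of the $p$-map induction -- confirming that the restricted structure on $FRP(V)$ supplied by Example \ref{xxex6.2} truly restricts to the given $p$-map on $RLie(V)$, rather than differing from it by a Frobenius derivation in the sense of Proposition \ref{xxpro5.4}; this is guaranteed by the explicit construction in Theorem \ref{xxthm6.1}.
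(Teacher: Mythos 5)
Your argument is correct in substance, but note that the paper itself gives no proof of Lemma \ref{xxlem6.4}: it simply declares the universal property ``standard'' and cites Shestakov's Lemma 1, which is the analogous statement for free (unrestricted) Poisson algebras, i.e.\ Lemma \ref{xxlem2.4} here. So you are genuinely supplying an argument the paper omits. Your two-stage construction --- first the universal property of $RLie(V)$, then the symmetric-algebra extension of $\tilde g$ to $FRP(V)=\K[RLie(V)]$, with Example \ref{xxex6.2} guaranteeing that the restricted Poisson $p$-map of $FRP(V)$ restricts on $RLie(V)$ to the given restricted Lie $p$-map --- is the natural way to do this, and the propagation steps (Leibniz rule for the bracket; \eqref{E3.5.1} for products, Definition \ref{xxdef1.1}(3) for sums, using that $G$ preserves $\Phi_p$ and $\Lambda_p$ because it preserves products and brackets) all go through. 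Two of your citations are slightly off target, though neither affects correctness: Lemma \ref{xxlem2.4} literally concerns $FP(V)=\K[Lie(V)]$ rather than $\K[RLie(V)]$, and since $FRP(V)$ is not generated by $V$ as an ordinary Poisson algebra you cannot invoke that lemma itself, only its Leibniz-propagation mechanism (as you in fact do); and Proposition \ref{xxpro4.6} together with Theorem \ref{xxthm4.7} are about verifying \eqref{E3.5.1} on a basis of a single algebra, not about a morphism commuting with $p$-maps, so the induction you need is the direct one you sketch rather than an application of those results. Read as analogies rather than literal invocations, these do not create a gap, and the uniqueness argument is also correct.
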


Continuing Example \ref{xxex6.2}, when $L$ is a restricted Lie
algebra over $\K$ and $S(L):=\K[L]$ the symmetric algebra on $L$,
then $S(L)$ admits an induced restricted Poisson structure.
One natural setting in positive characteristic  is to replace
the symmetric algebra $S(L)$ by the truncated (or small) symmetric
algebra $s(L)$. By definition, when $L$ has a $\K$-basis $\{x_i\}_{i\in I}$,
\begin{equation}
\label{E6.4.1}\tag{E6.4.1}
s(L)=\K[x_i\mid i\in I]/(x_i^p, \; \forall\;  i\in I).
\end{equation}
It is easily seen that $s(L)$ admits a Poisson structure with the bracket
$\{f, g\}=\sum\limits_{i, j}
(\pian{f}{x_i}\pian{g}{x_j}-\pian{f}{x_j}\pian{g}{x_i})\{x_i, x_j\}$
for any $f, g\in s(L)$. Next we show that $s(L)$ has a natural
restricted Poisson structure.

\begin{theorem}
\label{xxthm6.5}
Let $L$ be a restricted Lie algebra over $\K$ of
characteristic $p$ and let $s(L)$ be the Poisson algebra with the
bracket induced by $L$. Then $s(L)$ admits a natural restricted
Poisson structure induced by the $p$-map of $L$.
\end{theorem}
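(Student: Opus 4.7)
The plan is to realize $s(L)$ as a quotient of the polynomial Poisson algebra $S(L)=\K[L]$ (which is already known to be restricted Poisson by Example \ref{xxex6.2}) modulo a restricted Poisson ideal, and then invoke Proposition \ref{xxpro4.3}. Fix a $\K$-basis $\{x_i\}_{i\in I}$ of $L$, and let $J\subseteq S(L)$ be the ideal generated by $\{x_i^p\mid i\in I\}$, so that by definition $s(L)=S(L)/J$. Note that since $\operatorname{char}\K=p$, one has $(y+z)^p=y^p+z^p$ in $S(L)$, and so $J$ admits the basis-free description as the ideal generated by $\{y^p\mid y\in L\}$.

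First, I would verify that $J$ is a Poisson ideal of $S(L)$. For any $a\in S(L)$ and any $i\in I$, the Leibniz rule gives $\{a,x_i^p\}=p\,x_i^{p-1}\{a,x_i\}=0$. Thus each generator $x_i^p$ lies in the Poisson center of $S(L)$, so $\{S(L),J\}\subseteq J$.

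Second, I would show that $J$ is a restricted Poisson ideal. By Lemma \ref{xxlem4.2}, it is enough to check that $\pp{(x_i^p)}\in J$ for every $i\in I$. This is immediate from Proposition \ref{xxpro3.5}(3), which gives $\pp{(x^p)}=0$ for every element $x$ of any restricted Poisson algebra; in particular $\pp{(x_i^p)}=0\in J$. Applying Proposition \ref{xxpro4.3} then yields that $s(L)=S(L)/J$ inherits a restricted Poisson structure.

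Finally, I would record that the induced structure is the natural one: for any $x\in L\subseteq s(L)$, the $p$-map on $S(L)$ built in Example \ref{xxex6.2} satisfies $\pp{x}=x^{[p]}\in L$, and passing to the quotient preserves this equality. The expected main obstacle is essentially bookkeeping rather than conceptual---one must be confident that the ideal $J$ is intrinsic to $L$ (not dependent on the choice of basis) and that the induced structure does not depend on any auxiliary choices made in Example \ref{xxex6.2}; both follow because any two restricted Poisson structures on $S(L)$ agreeing with the given $p$-map on $L$ coincide, by the inductive construction in the proof of Theorem \ref{xxthm6.1}.
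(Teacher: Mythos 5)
Your proposal is correct and follows essentially the same route as the paper: present $s(L)$ as $S(L)/J$ with $S(L)$ restricted Poisson via Example \ref{xxex6.2}, show $J$ is a restricted Poisson ideal using Proposition \ref{xxpro3.5}(3) and Lemma \ref{xxlem4.2}, and conclude by Proposition \ref{xxpro4.3}. The extra details you supply (centrality of the $x_i^p$, basis-independence of $J$, and the identification of the induced $p$-map on $L$) are correct elaborations of steps the paper leaves implicit.
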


\begin{proof} By Example \ref{xxex6.2}, $S(L)$ has an induced
restricted Poisson algebra structure. By \eqref{E6.4.1},
$$s(L)=S(L)/J$$
where $J$ is the Poisson ideal generated by $x_i^p$ for all
$i\in I$. By Proposition \ref{xxpro3.5}(3), $\pp{(x_i^p)}=0$.
By Lemma \ref{xxlem4.2}, $J$ is a restricted Poisson ideal
as desired.
\end{proof}

\section{Restricted Poisson algebras from deformation quantization}
\label{xxsec7}

We will produce more examples in this section.

Let $A$ be a commutative (associative) algebra. Let $\K[[t]]$ be
the formal power series ring in one variable $t$.
A {\it formal deformation} of $A$ means an associative algebra
$A[[t]]$ over $\K[[t]]$ with multiplication, denoted by $m_t$,
satisfying
\[m_t(a\otimes b)=a\ast b=ab+m_1(a, b)t+\cdots+m_n(a, b)t^n+\cdots,\]
for all $a, b\in A\subset A[[t]]$. We should view $A[[t]]$ as
the power series ring in one variable $t$ with coefficients
in $A$ where the associative multiplication $m_t$ (or the star product
$\ast$) is induced by a family of $\K$-bilinear maps
$\{m_i\colon A\otimes A\to A\}_{i\ge 0}$ with $m_0(a, b)=ab$.

Define a bilinear map $\{-, -\}\colon A\otimes A\to A$ by
setting $\{a, b\}=m_1(a, b)-m_1(b, a)$. It is
easy to check that $A$ together with the bracket $\{-, -\}$
is a Poisson algebra. Then $(A, \{-,-\})$ is called the
\emph{classical limit} of $(A[[t]],m_t)$, and $(A[[t]], m_t)$
is called a \emph{deformation quantization} of the Poisson
algebra $(A, \{-,-\})$.

For every $f\in A$, we write
the $p$-power of $f$  as
\begin{equation}
\label{E7.0.1}\tag{E7.0.1}
f^{\ast p}=\Sum_{n=0}^\infty M_n^p(f) t^n
=f^p+M_1^p(f) t+M_2^p(f)t^2 +\cdots \quad \in A[[t]]
\end{equation}
where $M_i^p(f)\in A$ for all $i=0,1,2,\cdots$.

\begin{proposition}
\label{xxpro7.1}
Let $(A, \cdot, \{-,-\})$ be a Poisson algebra over $\K$
and let $(A[[t]], \ast)$ be a deformation
quantization of $A$. If $M_n^p(f)=0$ for $1\le n\le p-2$
and $f^p$ is central in $A[[t]]$ for all $f\in A$,
then $A$ admits a restricted Poisson structure.
\end{proposition}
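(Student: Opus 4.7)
The natural candidate for the $p$-map is $\pp{f}:=M_{p-1}^p(f)$, the coefficient of $t^{p-1}$ in $f^{\ast p}$; the hypothesis $M_n^p(f)=0$ for $1\le n\le p-2$ says exactly that $f^{\ast p}=f^p+t^{p-1}\pp{f}+O(t^p)$, so $\pp{f}$ is the leading nontrivial deformation of the classical Frobenius. The plan is: first verify that $\pp{(-)}$ endows $(A,\{-,-\})$ with a restricted Lie algebra structure, and then verify the multiplicative compatibility \eqref{E3.4.1}.

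The weakly restricted Lie axioms are essentially formal. Condition \ref{xxdef1.1}(2) is immediate from $(\lambda f)^{\ast p}=\lambda^p f^{\ast p}$. For Condition \ref{xxdef1.1}(3), I would apply the associative-algebra identity $(f+g)^{\ast p}=f^{\ast p}+g^{\ast p}+\Lambda_p^{\ast}(f,g)$ in $A[[t]]$; each $\ast$-commutator is divisible by $t$, so the $t^{p-1}$-coefficient of $\Lambda_p^{\ast}(f,g)$ is the Poisson version $\Lambda_p(f,g)$, and reading off $t^{p-1}$ gives $\pp{(f+g)}=\pp{f}+\pp{g}+\Lambda_p(f,g)$. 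For Condition \ref{xxdef1.1}(1), $\ad_f^p=\ad_{\pp{f}}$, I would use the associative Frobenius identity $[f,\cdot]_\ast^p=[f^{\ast p},\cdot]_\ast$: the left side has leading term $t^p\ad_f^p+O(t^{p+1})$, while centrality of $f^p$ annihilates the classical $t^0$ coefficient on the right and the vanishing hypothesis forces the first surviving contribution to be $t^{p-1}[\pp{f},\cdot]_\ast\equiv t^p\ad_{\pp{f}}\pmod{t^{p+1}}$; matching $t^p$ coefficients yields the identity.

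For the compatibility \eqref{E3.4.1}, I would compute $(f^2)^{\ast p}$ modulo $t^p$ two ways: directly, $(f^2)^{\ast p}\equiv f^{2p}+\pp{(f^2)}\,t^{p-1}\pmod{t^p}$; alternatively, writing $f^2=f\ast f-r$ with $r=\sum_{n\ge 1}m_n(f,f)t^n=O(t)$, the estimates $r^{\ast p}=O(t^p)$ and $\Lambda_p^{\ast}(f\ast f,-r)=O(t^p)$ (each $\ast$-bracket contributes one $t$, and the presence of the $O(t)$ entry contributes another) give $(f^2)^{\ast p}\equiv(f\ast f)^{\ast p}=(f^{\ast p})^{\ast 2}\pmod{t^p}$, and expanding $(f^{\ast p})^{\ast 2}$ with $f^{\ast p}=f^p+t^{p-1}\pp{f}+O(t^p)$ and using centrality of $f^p$ to pair the two cross terms yields $f^p\ast f^p+2t^{p-1}f^p\pp{f}\pmod{t^p}$. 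Matching reduces \eqref{E3.4.1} to showing $f^p\ast f^p\equiv f^{2p}\pmod{t^p}$, i.e.\ $m_n(f^p,f^p)=0$ for $1\le n\le p-1$. The vanishing for $1\le n\le p-2$ falls out of matching the lower-order $t$-coefficients in the same computation, so the main obstacle is the single remaining equation $m_{p-1}(f^p,f^p)=0$. I expect this to follow either by iterating the scheme with $f$ replaced by $f^p\in A$ (which inherits both hypotheses: $f^{p^2}$ is central and $M_n^p(f^p)=0$ for $1\le n\le p-2$), or by an associative-algebra Rees-ring expansion in the spirit of the proof of Lemma \ref{xxlem3.7} applied to $f^{\ast 2p}=(f^{\ast 2})^{\ast p}$, in which the symmetric $f^p$-$f^p$-correction is forced to be absorbed by $\Phi_p$-type terms. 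Once settled, \eqref{E3.4.1} holds and $A$ is a restricted Poisson algebra by Definition \ref{xxdef3.4}.
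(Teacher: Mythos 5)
Your candidate $p$-map, the verification of the restricted Lie axioms, and the strategy for \eqref{E3.4.1} all coincide with the paper's proof of Proposition \ref{xxpro7.1}: the same $\pp{f}=M_{p-1}^p(f)$, the same use of $[f^{\ast p},g]_\ast=[f,\cdots,f,g]_\ast$ together with centrality of $f^p$ for Definition \ref{xxdef1.1}(1), the same reading of the $t^{p-1}$-coefficient of the associative Jacobson identity for Definition \ref{xxdef1.1}(3), and the same two-sided computation of $f^{\ast 2p}$ (as $f^{\ast p}\ast f^{\ast p}$ and as $(f^2+tW)^{\ast p}$) for \eqref{E3.4.1}. That part of your argument is complete and correct.

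The step you leave open --- $m_{p-1}(f^p,f^p)=0$, equivalently $f^p\ast f^p\equiv f^{2p}\pmod{t^p}$ --- is a genuine unresolved obligation in your write-up, and neither of your two suggested escapes closes it: replacing $f$ by $f^p$ only controls $(f^p)^{\ast p}$, not the single product $f^p\ast f^p$, and centrality of $f^p$ gives only the symmetry $m_n(f^p,g)=m_n(g,f^p)$, not vanishing. You should know, however, that the paper does not supply an argument here either: its first computation asserts $f^{\ast p}\ast f^{\ast p}\equiv f^{2p}+2f^pM_{p-1}^p(f)t^{p-1}\pmod{t^p}$, which silently presupposes $f^p\ast f^p\equiv f^{2p}\pmod{t^p}$; by your own bookkeeping the degree-$(p-1)$ part of that congruence is equivalent to \eqref{E3.4.1} itself, so the paper's treatment of this point is circular as written. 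In the instances the paper actually uses (Examples \ref{xxex7.2}--\ref{xxex7.4}) the needed identity holds for the stronger reason that $f^p\ast g=f^pg$ exactly (all partial derivatives of $f^p$ vanish for those star products), and under that stronger hypothesis both your argument and the paper's are complete. So: same route as the paper, with one step that you correctly isolate as the real content and that the paper glosses over rather than proves.
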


\begin{proof} Recall that
$f\ast g=\Sum_{n=0}^\infty m_n(f, g)t^n\in A[[t]]$
for all $f, g\in A$, where $m_n(f,g)\in A$ for all
$n$. By the definition of the deformation quantization,
\[\{f, g\}=\lim_{t\to 0} \dfrac{f\ast g-g\ast f}{t}=m_1(f, g)-m_1(g, f).\]
for all $f, g\in A$.
Considering the Frobenius map $f\mapsto f^{\ast p}$ in $A[[t]]$, we get
\begin{equation}
\label{E7.1.1}\tag{E7.1.1}
[f^{\ast p}, g]_\ast=[\underbrace{f,\cdots, f}_{p\ {\rm copies}}, g]_\ast
\end{equation}
for all $f, g\in A$.

Since $[f, g]_\ast=\{f, g\}t \  ({\rm mod}\ t^2)$ and
$[-, -]_\ast$ is $\K[[t]]$-bilinear, we have
\[ [\underbrace{f,\cdots, f}_{p\ {\rm copies}}, g]_\ast
\equiv \{\underbrace{f,\cdots, f}_{p\ {\rm copies}}, g\}t^{p}
\  ({\rm mod}\  t^{p+1}).\]
By assumption, $M_n^{p}(f)=0$ for $1\le n\le p-2$ and $f^p$
is central in $A[[t]]$.
Using the fact that \eqref{E7.1.1} or $\ad_{f^{*p}}(g)=(\ad_{f})^p(g)$,
it follows that
$$\{ M_{p-1}^p(f), g\} t^p
=\{\underbrace{f,\cdots, f}_{p\ {\rm copies}}, g\}
 t^p \  ({\rm mod}\ t^{p+1})$$
or
\begin{equation}
\label{E7.1.2}\tag{E7.1.2}
\{M^p_{p-1}(f), g\}=m_1(M^p_{p-1}(f), g)-m_1(g, M^p_{p-1}(f))
=\{\underbrace{f,\cdots, f}_{p\ {\rm copies}}, g\}
\end{equation}
for all $g\in A$. We define $\pp{f}=M_{p-1}^p(f)$ for any $f\in A$,
and prove that the map $f\mapsto M_{p-1}^p(f)$ gives rise to a
restricted Poisson structure on $A$.

Note that Definition \ref{xxdef1.1}(1) follows from \eqref{E7.1.2}.
Definition \ref{xxdef1.1}(2) follows from the fact that $(\lambda f)^{*p}
=\lambda^p f^{*p}$. For condition in Definition \ref{xxdef1.1}(3),
we consider the Frobenius map of $A[[t]]$, and get a restricted
Lie structure of $(A[[t]], [-,-]_\ast)$. It follows from Example
\ref{xxex1.2} that
\[(f+g)^{\ast p}-f^{\ast p}-g^{\ast p}=\Lambda^\ast_p(f, g).\]
Computing the coefficients of $t^{p-1}$ of the above equation,
we get
\begin{align}
\label{E7.1.3}\tag{E7.1.3}
\pp{(f+g)}-\pp{f}-\pp{g}=\Lambda_p(f, g)
\end{align}
as desired.

Finally it remains to show \eqref{E3.4.1}.
By assumption, $M_n^p(f)=0$ for all $1\le n\le p-2$.
We compute the coefficient of $t^{p-1}$ in the
expression of $f^{\ast, 2p}$ as follows:
\begin{align*}
f^{\ast, 2p} &= f^{\ast p}\ast f^{\ast p}\\
&= (f^p+t^{p-1} M_{p-1}^p(f)+\cdots)\ast (f^p+t^{p-1}M_{p-1}^p(f)+\cdots)\\
&\equiv f^{2p}+2f^p M_{p-1}^p(f) t^{p-1}\ \ ({\rm mod}\  t^{p})
\end{align*}
Assume that $f\ast f=f^2+tW$, where $W=m_1(f, f)+m_2(f, f)t+\cdots$,
and it follows that
\begin{align*}
f^{\ast, 2p} &= (f^{\ast 2})^{\ast p}= (f^2+tW)^{\ast p}\\
&=(f^2)^{\ast p}+ (tW)^{\ast p}+\Lambda_p^\ast(f^2, tW)\\
&\equiv f^{2p}+M_{p-1}^p(f^2)t^{p-1}\ \ ({\rm mod}\  t^{p})
\end{align*}
Therefore, for all $f\in A$, $\pp{f^2}=2f^p\pp{f}$, which is
\eqref{E3.4.1}.
\end{proof}

We now give some explicit examples.

\begin{example}\label{xxex7.2}
Let $A=\K[x, y]$ be a Poisson algebra over a field $\K$ of
characteristic $p\geq 3$ with the bracket
given by $\{x, y\}=1$.

Let $\mu$ be the multiplication of the commutative algebra
$A[[t]]$. By a direct calculation, the Poisson algebra $A$ admits a
deformation quantization $(A[[t]], \ast)$ with the star product given by
\[f\ast g=\mu(\exp(t(\partial_1\ot \partial_2))(f\ot g))\]
for all $f, g\in A$, where $\partial_1$ and $\partial_2$
are the partial derivatives of $f$ with respect to the variables
$x$ and $y$, respectively. To be precise, we have
\[f\ast g=\Sum_{0\le n\le p-1}m_n(f, g)t^n=
\Sum_{0\le n\le p-1}\dfrac{t^n}{n!} (\partial^n_1 f) (\partial^n_2 g).\]
Clearly, $f^p\ast g=f^pg= g\ast f^p$ for any $f, g\in A$ and hence $f^p$
is central in $A[[t]]$. Moreover,
for every $f\in A$, we claim that
\begin{equation}
\label{E7.2.1}\tag{E7.2.1}
{\text{$M_n(f)=0$ for $1\le n\le p-2$.}}
\end{equation}
The proof of the above is given in Appendix.
By Proposition \ref{xxpro7.1}, $A$ admits a restricted Poisson structure
with the $p$-map $\pp{f}=M_{p-1}^p(f)$ for any $f\in A$.
The $p$-map agrees with \eqref{E4.8.2} when $p=3$
and \eqref{E4.8.3} when $p=5$.
\end{example}

The next is a generalization of the previous example.

\begin{example}
\label{xxex7.3}
Let $A=\K[x_1, \cdots, x_m]$ be a Poisson algebra with
the bracket given by $\{x_i, x_j\}=c_{ij}\in \K$
for $1\le i< j\le n$. By direct calculation,
a deformation quantization $(A[[t]], \ast)$ of the
Poisson algebra $A$ is given by
\[f\ast g=\mu
\left(\exp\left(t\Sum_{1\le i<j\le m} c_{ij}
\partial_i \ot \partial_j\right)(f\ot g)\right)\]
for all $f, g\in A$, where $\partial_i$ is the partial
derivative of $f$ with respect to the variable $x_i$.
This is well-defined by Remark \ref{xxrem10.1}(2).
Clearly, $f^p\in A\subset A[[t]]$ is central for any
$f\in A$. Being similar to the proof of Example \ref{xxex7.2}
in Appendix, we have $M_n(f)=0$ for $1\le n\le p-2$
and all $f\in A$. By Proposition \ref{xxpro7.1}, $A$
admits a restricted Poisson structure with the $p$-map
$\pp{f}=M_{p-1}^p(f)$ for any $f\in A$. When $p=3$, the
$p$-map is given in Example \ref{xxex4.10}.
\end{example}

\begin{example}
\label{xxex7.4}
Let $B_{2n}=\K[x_1, \cdots, x_{2n}]/I$ be the 
$p$-truncated polynomial Poisson algebra in $2n$
variables over $\K$, where the Poisson
bracket is defined by
$$\{f, g\}=\Sum_{i=1}^n (\partial_i(f)\partial_{n+i}(g)
-\partial_{n+i}(f)\partial_i(g))$$
for all $f, g\in B_{2n}$,
and $I$ is generated by $x_i^p, i=1, \cdots, 2n$.
In \cite{Sk}, Skryabin introduced the notion of
the normalized $p$-map on $(B_{2n}, \{-,-\})$,
say, $\pp{1}=0$ and $\pp{f}\in \mathfrak{m}^2$
for all $f\in \mathfrak{m}^2$, where $\mathfrak{m}$
is the maximal ideal of $B_{2n}$ as an associative algebra.

We consider the Poisson algebra
$A=\K[x_1, \cdots, x_{2n}]$ in Example \ref{xxex7.3}
with the bracket given by $c_{ij}=\delta_{i+n, j}$
for all $1\le i<j\le 2n$. Clearly, $x_i^p$ is
central and $I$ is a Poisson ideal of $A$. By Proposition
\ref{xxpro3.5}(3), $\pp{(x_i^p)}=0$ for all $i\in I$,
and by Lemma \ref{xxlem4.2}, $I$ is a restricted
Poisson ideal of $A$. Therefore, it follows from
Proposition \ref{xxpro4.3} that the Poisson
algebra $B_{2n}$ admits a restricted Poisson
structure. Clearly, this $p$-map is normalized.
\end{example}


\section{Connection with restricted Lie-Rinehart Algebras}
\label{xxsec8}
Some definitions concerning Lie-Rinehart algebras were given
in Section 2. Let $A$ be a Poisson algebra and $\Omega_{A/\K}$ its
K\" ahler differentials. Then the pair $(A, \Omega_{A/\K})$ is a
Lie-Rinehart algebra over $\K$, where the anchor map
$\alpha: \Omega_{A/\K} \to \Der(A)$ is given in \eqref{E2.2.2}.
Dokas introduced the notion of a restricted Lie-Rinehart
algebra and study its cohomology theory in \cite{Do}. The goal
of this section is to show that the Lie-Rinehart algebra
$(A, \Omega_{A/\K})$ admits a natural restricted structure
if the Poisson algebra $A$ is weakly restricted and $\Omega_{A/\K}$
is a free module over $A$.

Let $(L, (-)^{[p]}$ and $(L', (-)^{[p]})$ be restricted Lie
algebras. A map $f\colon (L, (-)^{[p]}) \to (L', (-)^{[p]})$
is called a restricted Lie homomorphism, if $f$ is a Lie
algebra homomorphism and satisfies $f(x^{[p]})=f(x)^{[p]}$
for all $x\in L$.

The following definition was introduced by Dokas \cite{Do}.

\begin{definition}
\cite[Definition 1.7]{Do}
\label{xxdef8.1}
A \emph{restricted Lie-Rinehart algebra} $(A, L, (-)^{[p]})$ over
a commutative $\K$-algebra $A$, is a Lie-Rinehart algebra over $A$
such that
\begin{enumerate}
\item[(a)]
$(L, (-)^{[p]})$ is a restricted Lie algebra over $\K$,
\item[(b)]
the anchor map
is a restricted Lie homomorphism, and
\item[(c)]
the following relation holds:
\[(aX)^{[p]}=a^pX^{[p]}+(aX)^{p-1}(a)X\]
for all $a\in A$ and $X\in L$.
\end{enumerate}
\end{definition}

We now prove Theorem \ref{xxthm0.5}.

\begin{theorem}
\label{xxthm8.2}
Let $(A, \cdot, \{-,-\}, \pp{(-)})$ be a weakly restricted Poisson
algebra. If the K\"ahler differential $\Omega_{A/\K}$ is a free,
then the Lie-Rinehart algebra $(A, \Omega_{A/\K}, (-)^{[p]})$ is
restricted, where the $p$-map of $\Omega_{A/\K}$ is defined by
\begin{align*}
&(x\d u)^{[p]}= x^p \d\pp{u}+(x \d u)^{p-1}(x)\d u,
\end{align*}
for all $x\d u \in \Omega_{A/\K}$.
\end{theorem}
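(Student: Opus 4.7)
The plan is to identify $\Omega_{A/\K}$ with its image in the Poisson enveloping algebra $\P(A)$ via the injection $\iota_2$, which is available because freeness of $\Omega_{A/\K}$ over $A$ implies projectivity, so Lemma \ref{xxlem2.3} applies. I will then define the $p$-map on $\Omega_{A/\K}$ by first setting $(\d u)^{[p]}:=\d\pp{u}$ for every $u\in A$, reading off the formula for $(x\d u)^{[p]}$ from the target compatibility axiom (c) of Definition \ref{xxdef8.1}, and extending to arbitrary elements via Jacobson's additivity axiom \ref{xxdef1.1}(3) using $\Lambda_p$. It then remains to verify, in order: (i) the Jacobson identity $\ad_X^p=\ad_{X^{[p]}}$; (ii) well-definedness of the formula across the defining relations of $\Omega_{A/\K}$; (iii) that the anchor $\alpha$ is a restricted Lie homomorphism; and (iv) axiom (c) itself.

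For step (i) on pure differentials, I would expand by induction using the bracket on $\Omega_{A/\K}$ to obtain $\ad_{\d u}^k(y\d v)=\sum_{i+j=k}\binom{k}{i}\ad_u^i(y)\,\d\ad_u^j(v)$; at $k=p$ the middle binomials vanish modulo $p$, and the hypothesis $\ad_u^p=\ad_{\pp{u}}$ yields $\ad_{\d u}^p(y\d v)=\{\pp{u},y\}\d v+y\,\d\{\pp{u},v\}=\ad_{\d\pp{u}}(y\d v)$. For a general simple element $X=x\d u$, I would work in $\P(A)$: setting $b=\iota_1(x)$ and $Y=\iota_2(\d u)$, the element $Y$ acts as a derivation on the commutative subalgebra $\iota_1(A)$ since $[Y,\iota_1(a)]=\iota_1(\{u,a\})$, and a direct expansion in $\P(A)$ of Hochschild type yields $(bY)^p=b^pY^p+\iota_1((x\d u)^{p-1}(x))\,Y$. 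Using the defining relation $\iota_1(a)\iota_2(\d v)=\iota_2(a\d v)$, this rewrites as
\[\iota_2(x\d u)^p=\iota_2\!\left(x^p\d\pp{u}+(x\d u)^{p-1}(x)\d u\right)+\iota_1(x^p)\bigl(\iota_2(\d u)^p-\iota_2(\d\pp{u})\bigr).\]
The last summand is central in $\P(A)$, since $\iota_2(\d u)^p-\iota_2(\d\pp{u})$ has vanishing commutator with both $\iota_1(A)$ and $\iota_2(\Omega_{A/\K})$ by the generator computation above (and because $\iota_2$ is a Lie map), while $\iota_1(x^p)$ is central by $[\iota_2(\omega),\iota_1(x^p)]=\iota_1(\alpha(\omega)(x^p))=0$ in characteristic $p$. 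Taking commutators against $\iota_2(\Omega_{A/\K})$ and invoking injectivity of $\iota_2$ then yields $\ad_{x\d u}^p=\ad_F$ with $F=x^p\d\pp{u}+(x\d u)^{p-1}(x)\d u$, as proposed.

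Step (ii) is where the bulk of the work lies. The additive relation $\d(u+v)=\d u+\d v$ is handled by the observation $[\d u,\d v]=\d\{u,v\}$, which implies $\Lambda_p(\d u,\d v)=\d\Lambda_p(u,v)$ and hence $\d\pp{(u+v)}=\d\pp{u}+\d\pp{v}+\Lambda_p(\d u,\d v)$, matching the extension by axiom \ref{xxdef1.1}(3). The Leibniz relation $\d(uv)=u\d v+v\d u$ reduces, after using \eqref{E3.5.1} and $\d(f^p)=0$, to the identity
\[\d\Phi_p(u,v)=(u\d v)^{p-1}(u)\,\d v+(v\d u)^{p-1}(v)\,\d u+\Lambda_p(u\d v,v\d u),\]
which is the principal technical obstacle. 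I expect this identity to be verifiable by a direct combinatorial calculation in the free Poisson algebra, along the lines of Lemma \ref{xxlem4.5}; alternatively, one can compare $(\iota_2(u\d v+v\d u))^p$ with $(\iota_2(\d(uv)))^p$ inside $\P(A)$ using Proposition \ref{xxpro3.3} and the Hochschild identity of step (i).

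The remaining checks are comparatively routine. Axiom \ref{xxdef1.1}(2) is immediate from the formula. The anchor $\alpha$ is a restricted Lie homomorphism because $\alpha(x\d u)=x\{u,-\}$ is a derivation of $A$, and Hochschild's formula for derivations combined with $\{u,-\}^p=\{\pp{u},-\}$ gives $\alpha(x\d u)^p(z)=x^p\{\pp{u},z\}+(x\d u)^{p-1}(x)\{u,z\}=\alpha((x\d u)^{[p]})(z)$. Axiom (c) of Definition \ref{xxdef8.1} is built into the formula: with $X=\d u$ it reads $(a\d u)^{[p]}=a^p(\d u)^{[p]}+(a\d u)^{p-1}(a)\d u$, which agrees with the stated expression precisely because $(\d u)^{[p]}=\d\pp{u}$. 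The genuine obstacle is thus step (ii), specifically the displayed Leibniz-consistency identity; its verification is also the only place where the full restricted Poisson structure on $A$ is essentially used.
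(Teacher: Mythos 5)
Your core computation coincides with the paper's: embed $\Omega_{A/\K}$ into $\P(A)=\U(A,\Omega_{A/\K})$ via $\iota_2$ (Lemma \ref{xxlem2.3}), apply Hochschild's relation $(\iota_2(x\d u))^p=\iota_1(x^p)(\iota_2(\d u))^p+\iota_2((x\d u)^{p-1}(x)\d u)$ from \cite[Lemma 1]{Ho1}, and deduce $\ad_{x\d u}^p=\ad_{x^p\d\pp{u}+(x\d u)^{p-1}(x)\d u}$; your centrality argument for $\iota_1(x^p)$ and for $(\iota_2(\d u))^p-\iota_2(\d\pp{u})$ is a clean repackaging of the explicit bracket computation in the paper. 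The structural difference is what happens around that computation. The paper invokes the proof of \cite[Proposition 2.2]{Do} to conclude that verifying this single Jacobson identity on elements $x\d u$ suffices, so existence and well-definedness of the $p$-map are absorbed into Dokas's result. You instead build the restricted structure by hand and correctly isolate the consistency obligation coming from $\d(uv)=u\d v+v\d u$, namely
$\d\Phi_p(u,v)=(u\d v)^{p-1}(u)\,\d v+(v\d u)^{p-1}(v)\,\d u+\Lambda_p(u\d v,v\d u)$.
But you do not prove it: ``I expect this identity to be verifiable'' is a genuine gap, and it sits at what you yourself call the principal technical obstacle. The gap is fillable by the second route you mention --- apply $\iota_2$, rewrite $\iota_2(\d\Phi_p(u,v))$ via Proposition \ref{xxpro3.3}, expand $(\iota_2(u\d v)+\iota_2(v\d u))^p$ using $\Lambda_p$ and Hochschild's relation, and invoke injectivity of $\iota_2$ --- so you should carry that out rather than leave it as an expectation.

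Two further points. First, your consistency check uses \eqref{E3.5.1}, which is available for restricted Poisson algebras but not for merely weakly restricted ones: under the weak hypothesis, $\pp{(uv)}-u^p\pp{v}-v^p\pp{u}-\Phi_p(u,v)$ is only known to be Poisson-central (Lemma \ref{xxlem3.2}(2)), not to lie in $\Ker\,\d$. Theorem \ref{xxthm8.2} is stated under the weak hypothesis and the paper's route through \cite{Do} never touches \eqref{E3.5.1}; as designed, your argument establishes the version in Theorem \ref{xxthm0.5} (restricted hypothesis), and your own analysis shows that consistency of the displayed formula across the Leibniz relation genuinely requires \eqref{E3.5.1} modulo $\Ker\,\d$ --- a point worth recording explicitly. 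Second, ``extending to arbitrary elements via Jacobson's additivity'' is not automatically well-defined either; you need Lemma \ref{xxlem1.3} on a $\K$-basis (or the $A$-basis argument underlying \cite[Proposition 2.2]{Do}) to legitimize the extension, which is precisely the step the paper outsources.
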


\begin{proof} Since $\Omega_{A/\K}$ is a free $A$-module,
$\Omega_{A/\K}$ can be embedded into the universal enveloping
algebra $\U(A, \Omega_{A/\K})$ [Lemma \ref{xxlem2.3}]. By the
proof of \cite[Proposition 2.2]{Do}, it suffices to show that
\[\ad_{x\d u}^p(y\d v)=[x^p \d\pp{u}+(x \d u)^{p-1}(x)\d u, y\d v]\]
for all $x\d u, y\d v\in \Omega_{A/\K}$.

By Hochschild's relation in \cite[Lemma 1]{Ho1}, we get in $\U(A, L)$
the relation
\[(\iota_2(x\d u))^p=\iota_1(x^p)(\iota_2(\d u))^p+\iota_2((x\d u)^{p-1}(x)\d u)\]
for all $x\d u\in \Omega_{A/\K}$.
Considering the Frobenius map of $\U(A, L)$, we have
\begin{align*}
[(\iota_2(\d u))^p, \iota_1(y)]
=  [\iota_2(\d u), \cdots, \iota_2(\d u), \iota_1(y)]
=  \iota_1( (\ad_u)^p(y)),
\end{align*}
and hence $\iota_2(\d u)^p \iota_1(y)=\iota_1(y)\iota_2(\d u)^p+\iota_1( (\ad_u)^p(y))$
for all $\d u\in \Omega_{A/\K}, y\in A$.
Moreover, for $x\d u, y\d v\in \Omega_{A/\K}\subset \U(A, L)$,
\begin{align*}
[\iota_1(x^p)(\iota_2(\d u))^p, \iota_2(y\d v)]
=& \iota_1(x^p)(\iota_2(\d u))^p
 \iota_1(y)\iota_2(\d v)-\iota_1(y)\iota_2(\d v)\iota_1(x^p)(\iota_2(\d u))^p\\
=& \iota_1(x^p)(\iota_1(y)(\iota_2(\d u))^p+\iota_1(\ad_u)^p(y))\iota_2(\d v)\\
&\qquad -\iota_1(y)(\iota_1(x^p)\iota_2(\d v)+\iota_1(\{v, x^p\})(\iota_2(\d u))^p\\
=& \iota_1(x^py)[(\iota_2(\d u))^p, \iota_2(\d v)]+\iota_2(x^p(\ad_u)^p(y)\d v)\\
=& \iota_1(x^py)\iota_2(\ad_{\d u}^p(\d v))+\iota_2(x^p(\d u)^p(y)\d v)\\
=& \iota_1(x^py)\iota_2(\d (\ad_u^p(v)))+\iota_2(x^p(\ad_u)^p(y)\d v),
\end{align*}
and therefore,
\begin{align*}
\iota_2(\ad_{x\d u}^p(y\d v))
=& [(\iota_2(x\d u))^p, \iota_2(y\d v)]\\
=& [\iota_1(x^p)(\iota_2(\d u))^p+\iota_2((x\ad_u)^{p-1}(x)\d u), \iota_2(y\d v)]\\
=& \iota_1(x^py)\iota_2(\d (\ad_u^p(v)))+\iota_2(x^p(\ad_u)^p(y)\d v)\\
&\qquad\qquad\qquad +\iota_2([(x\ad_u)^{p-1}(x)\d u, y\d v])\\
=& \iota_2(x^py\d (\ad_u^p(v)))+\iota_2(x^p(\ad_u)^p(y)\d v)\\
&\qquad\qquad\qquad +\iota_2([(x\ad_u)^{p-1}(x)\d u, y\d v])\\
=& \iota_2([x^p\d\pp{u}+(x\ad_u)^{p-1}(x)\d u, y\d v]),
\end{align*}
and hence $\ad_{x\d u}^p(y\d v)=[x^p\d\pp{u}+(x\ad_u)^{p-1}(x)\d u, y\d v]$
as desired.
\end{proof}

For Poisson algebras $A$ in Examples \ref{xxex4.8}-\ref{xxex4.10},
\ref{xxex6.2}, Theorem \ref{xxthm6.5},
Examples \ref{xxex7.2}-\ref{xxex7.4}, it is automatic that
$\Omega_{A/\Bbbk}$ is free over $A$.

\section{Restricted Poisson Hopf algebras}
\label{xxsec9}

We first recall the definition of Poisson Hopf algebras. The notion
of a Poisson Hopf algebra was probably first introduced by Drinfel'd
\cite{Dr1, Dr2} in 1980s, see also \cite{DHL}.

\begin{definition}
\label{xxdef9.1}
Let $A$ be a Poisson algebra. We say that $A$ is a Poisson Hopf
algebra if
\begin{enumerate}
\item[(1)]
$A$ is a Hopf algebra with usual operations $\Delta, \epsilon,S$.
\item[(2)]
$\Delta: A\to A\otimes A$ and $\epsilon: A\to \Bbbk$ are Poisson
algebra morphisms and $S: A\to A$ is a Poisson algebra anti-automorphism.
\end{enumerate}
\end{definition}

To define restricted Poisson Hopf algebras, we need first
show that tensor product of two restricted Poisson algebras is again
a restricted Poisson algebra.

\begin{proposition}
\label{xxpro9.2} Let $A$ and $B$ be two restricted Poisson algebras.
Then there is a unique restricted Poisson structure on $A\otimes B$
such that
\begin{equation}
\label{E9.2.1}\tag{E9.2.1}
\pp{(a\otimes b)}=\pp{a}\otimes b^p+a^p\otimes \pp{b}
\end{equation}
for all $a\in A$ and $b\in B$.
\end{proposition}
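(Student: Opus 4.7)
The strategy is to construct the $p$-map first on a basis of $A\otimes B$ via the prescribed formula, verify Jacobson's criterion so that Lemma~\ref{xxlem1.3} equips $A\otimes B$ with a weakly restricted Poisson structure, then extend (E9.2.1) to all elementary tensors, and finally upgrade to a restricted Poisson structure by checking (E3.5.1) on basis pairs via Theorem~\ref{xxthm4.7}.

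\emph{Step 1 (Weakly restricted structure).} For any elementary tensor $a\otimes b$, writing $L_x$ for multiplication by $x$, the operator on $A\otimes B$ decomposes as $\ad_{a\otimes b}=\ad_a\otimes L_b+L_a\otimes\ad_b$. The two summands commute because $[\ad_x,L_x]=0$ in any Poisson algebra (a direct consequence of the Leibniz rule), so the freshman's dream for commuting operators in characteristic $p$ gives
$$\ad^p_{a\otimes b}=\ad_a^p\otimes L_b^p+L_a^p\otimes\ad_b^p=\ad_{\pp{a}}\otimes L_{b^p}+L_{a^p}\otimes\ad_{\pp{b}}=\ad_{\pp{a}\otimes b^p+a^p\otimes\pp{b}},$$
using $L_x^p=L_{x^p}$ and the restricted Poisson structures of $A$ and $B$. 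Fix $\Bbbk$-bases $\{a_i\}_{i\in I}$ of $A$ and $\{b_j\}_{j\in J}$ of $B$, both containing the unit, and set $\gamma(a_i\otimes b_j):=\pp{a_i}\otimes b_j^p+a_i^p\otimes\pp{b_j}$. The display applied to $(a,b)=(a_i,b_j)$ verifies Jacobson's condition, so Lemma~\ref{xxlem1.3} produces a unique $p$-map on $A\otimes B$ making it a weakly restricted Poisson algebra with $\pp{(a_i\otimes b_j)}=\gamma(a_i\otimes b_j)$.

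\emph{Step 2 ((E9.2.1) on all elementary tensors).} Define $\phi\colon A\times B\to A\otimes B$ by $\phi(a,b):=\pp{(a\otimes b)}-\pp{a}\otimes b^p-a^p\otimes\pp{b}$. Using only Definition~\ref{xxdef1.1}(2,3) and the sub-identity $\Lambda_p(a\otimes b,a'\otimes b)=\Lambda_p(a,a')\otimes b^p$, a short calculation shows that $\phi$ is biadditive and satisfies $\phi(\lambda a,b)=\phi(a,\lambda b)=\lambda^p\phi(a,b)$. The sub-identity is proved by induction on the length of a nested bracket: $[c_1\otimes b,\ldots,c_k\otimes b]=[c_1,\ldots,c_k]\otimes b^k$, because each inductive step produces a crossed term containing $\{b,b^{m}\}=0$. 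Since $\phi(a_i,b_j)=0$ by construction in Step 1, bi-$p$-semilinearity and biadditivity force $\phi\equiv0$, so (E9.2.1) holds for all $a\in A$, $b\in B$.

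\emph{Step 3 ((E3.5.1) on basis pairs, Theorem~\ref{xxthm4.7}).} It remains to verify (E3.5.1) for each basis pair $(a_i\otimes b_j,a_k\otimes b_l)$. By Step 2 with $b=1_B$ and $a=1_A$ (and $\pp{1_A}=\pp{1_B}=0$ by Proposition~\ref{xxpro3.5}(1)), the embeddings $A\hookrightarrow A\otimes B$, $a\mapsto a\otimes 1$, and $B\hookrightarrow A\otimes B$, $b\mapsto 1\otimes b$, are morphisms of restricted Poisson algebras. Hence (E3.5.1) holds automatically on pairs inside $A\otimes 1$ and inside $1\otimes B$, and also on cross pairs $(a\otimes 1,1\otimes b)$: both sides reduce to $\pp{a}\otimes b^p+a^p\otimes\pp{b}$ since $\Phi_p(a\otimes 1,1\otimes b)=0$ by Remark~\ref{xxrem3.6}(6)(d) (because $\{a\otimes 1,1\otimes b\}=0$). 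For a general basis pair, factor each element as $(a\otimes 1)(1\otimes b)$ and apply Proposition~\ref{xxpro4.6}(2) three times in a nested fashion: one outer application with $(a_i\otimes 1,\,1\otimes b_j,\,a_k\otimes b_l)$ reduces to a pair $(1\otimes b_j,a_k\otimes b_l)$ and a pair $(a_i\otimes 1,a_k\otimes b_jb_l)$, each of which is itself handled by one further application of (2) with atomic inputs. Theorem~\ref{xxthm4.7} then concludes that $A\otimes B$ is a restricted Poisson algebra. Uniqueness is immediate, since (E9.2.1) determines $\pp{}$ on the basis $\{a_i\otimes b_j\}$, hence on all of $A\otimes B$ by the restricted Lie axioms.

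The \textbf{main obstacle} is the bookkeeping in Step 3: the factorizations in the successive applications of Proposition~\ref{xxpro4.6}(2) must be chosen so that at each stage the two hypothesis pairs $(f,g)$ and $(g,h)$ are either atomic or have been verified in a previous round. Steps 1 and 2 are conceptually clean but rely on the commutation identity $[\ad_x,L_x]=0$ and the combinatorial identity for $\Lambda_p$ on elementary tensors sharing a factor.
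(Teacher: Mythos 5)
Your proof is correct, and its overall skeleton (Jacobson's criterion on the product basis $\{a_i\otimes b_j\}$ via Lemma~\ref{xxlem1.3}, then verification of \eqref{E3.5.1} on basis pairs through repeated use of Proposition~\ref{xxpro4.6}(2) and Theorem~\ref{xxthm4.7}) coincides with the paper's. Two local steps differ. In Step 1 you prove $\ad_{a\otimes b}^p=\ad_{\pp{a}\otimes b^p+a^p\otimes\pp{b}}$ by factoring $\ad_{a\otimes b}=\ad_a\otimes L_b+L_a\otimes\ad_b$ into commuting summands and invoking $(X+Y)^p=X^p+Y^p$; the paper instead exploits that $\ad_{a\otimes b}^p$ is a derivation and evaluates it on $c\otimes 1$ and $1\otimes d$. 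Both are valid (your final identification implicitly uses $\ad_{b^p}=0$, which holds since $\{b^p,-\}=pb^{p-1}\{b,-\}=0$; worth a word). In Step 2 you extend \eqref{E9.2.1} from basis pairs to all elementary tensors by showing the defect $\phi(a,b)=\pp{(a\otimes b)}-\pp{a}\otimes b^p-a^p\otimes\pp{b}$ is biadditive and Frobenius-semilinear, using $\Lambda_p(a\otimes b,a'\otimes b)=\Lambda_p(a,a')\otimes b^p$; the paper instead first derives $\pp{(a\otimes 1)}=\pp{a}\otimes 1$ and then runs the subspace argument of Proposition~\ref{xxpro4.6}(4) on $R_{a_i\otimes 1}$ together with $\Phi_p(a\otimes 1,1\otimes b)=0$. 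Your route is more self-contained and makes the semilinear structure of the discrepancy explicit, at the cost of having to verify the nested-bracket identity; the paper's route recycles machinery already established in Section~4. Your Step~3 is the paper's argument, and the decompositions can indeed be chosen so that every application of Proposition~\ref{xxpro4.6}(2) has previously verified hypothesis pairs, as you anticipate.
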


\begin{proof} First of all, it is well-known that $A\otimes B$ is a
Poisson algebra with bracket defined by
$$\{a_1\otimes b_1, a_2\otimes b_2\}=\{a_1,a_2\}\otimes b_1b_2
+a_1a_2\otimes \{b_1, b_2\}$$
for all $a_1,a_2\in A$ and $b_1,b_2\in B$.

Let $\{a_i\}_{i\in I}$ (respectively, $\{b_j\}_{j\in J}$)
be a $\Bbbk$-basis of $A$ (respectively, $B$) and assume that
$1_A\in \{a_i\}_{i\in I}$ and $1_B\in \{b_j\}_{i\in J}$.
Then $\{a_i \otimes b_j\}_{i\in I,j\in J}$
is a $\Bbbk$-basis of $A\otimes B$.

For any $a\in A$ and $b\in B$, $\ad_{a\otimes b}^p$ is a
derivation. For any $c\otimes d\in A\otimes B$, we have
$$\begin{aligned}
\ad_{a\otimes b}^p (c\otimes d)&= (1\otimes d)\ad_{a\otimes b}^p(c\otimes 1)
+(c\otimes 1)\ad_{a\otimes b}^p(1\otimes d)\\
&=(1\otimes d)(\ad_{a}^p(c)\otimes b^p)
+(c\otimes 1)(a^p\otimes \ad_{b}^p(d))\\
&=(1\otimes d)(\ad_{\pp{a}}(c)\otimes b^p)
+(c\otimes 1)(a^p\otimes \ad_{\pp{b}}(d))\\
&=(1\otimes d)(\ad_{\pp{a}\otimes b^p}(c\otimes 1))
+(c\otimes 1)(\ad_{a^p\otimes \pp{b}}(1\otimes d))\\
&=(1\otimes d)(\ad_{\pp{a}\otimes b^p}(c\otimes 1))
+(c\otimes 1)(\ad_{\pp{a}\otimes b^p}(1\otimes d))\\
& \qquad +(1\otimes d)(\ad_{a^p\otimes \pp{b}}(c\otimes 1))
+(c\otimes 1)(\ad_{a^p\otimes \pp{b}}(1\otimes d))\\
&=\ad_{\pp{a}\otimes b^p}(c\otimes d)
+\ad_{a^p\otimes \pp{b}}(c\otimes d)\\
&=\ad_{\pp{a}\otimes b^p+a^p\otimes \pp{b}}(c\otimes d).
\end{aligned}
$$
In particular,
$$\ad_{a_i\otimes b_j}^p =\ad_{(\pp{a_i}\otimes b_j^p+a_i^p\otimes \pp{b_j})}$$
for all $i$ and $j$.
Since $\{a_i \otimes b_j\}_{i\in I,j\in J}$
is a $\Bbbk$-basis of $A\otimes B$, by Lemma \ref{xxlem1.3}, there is a unique
weak restricted Poisson structure on $A\otimes B$ such that
\begin{equation}
\label{E9.2.2}\tag{E9.2.2}
\pp{(a_i\otimes b_j)}=\pp{a_i}\otimes b_j^p+a_i^p\otimes \pp{b_j}
\end{equation}
for all $i,j$, which agrees with \eqref{E9.2.1}. It remains to show that
this weak restricted Poisson structure on $A\otimes B$ is indeed a
restricted Poisson structure and \eqref{E9.2.1} holds.

We first prove \eqref{E9.2.1}. By \eqref{E9.2.2}, $\pp{(a_i\otimes 1)}=
\pp{a_i}\otimes 1$. It follows from Definition \ref{xxdef1.1} that
\begin{equation}
\label{E9.2.3}\tag{E9.2.3}
\pp{(a\otimes 1)}=\pp{a}\otimes 1
\end{equation}
for all $a\in A$. By symmetry,
$\pp{(1\otimes b)}=1\otimes \pp{b}$ for all $b\in B$.
Since $\{a_i\otimes 1, 1\otimes b_j\}=0$, \eqref{E9.2.2} implies that
the pair $(a_i\otimes 1, 1\otimes b_j)$ satisfies \eqref{E3.5.1}.
By Proposition \ref{xxpro4.6}(4), $R_{a_i\otimes 1}$ is a $\Bbbk$-vector
space; and by assumption, $\{b_j\}$ is a $\Bbbk$-basis of $B$, we have that
$R_{a_i\otimes 1}\supseteq B$. Or, for any $b\in B$, the pair
$(a_i\otimes 1, 1\otimes b)$ satisfies \eqref{E3.5.1}. By switching $a$ and
$b$ and applying the same argument, one sees that any pair
$(a\otimes 1, 1\otimes b)$ satisfies \eqref{E3.5.1}.
This means that
$$\begin{aligned}
\pp{(a\otimes b)}&=\pp{(a\otimes 1)} (1\otimes b)^p+
(a\otimes 1)^p \pp{(1\otimes b)}+\Phi_p(a\otimes 1, 1 \otimes b)\\
&= \pp{(a\otimes 1)} (1\otimes b)^p+
(a\otimes 1)^p \pp{(1\otimes b)}\\
&=\pp{a}\otimes b^p+a^p\otimes \pp{b}.
\end{aligned}
$$
So we proved \eqref{E9.2.1}.

For the rest, we claim that for any pair of elements
$(a_i\otimes b_j, a_k\otimes b_l)$, \eqref{E3.5.1} holds.
By using \eqref{E9.2.3}, \eqref{E3.5.1} holds for all
pairs of the form $(a\otimes 1, a'\otimes 1)$.
By symmetry, \eqref{E3.5.1} holds for all
pairs of the form $(1\otimes b, 1\otimes b')$. By
\eqref{E9.2.1}, \eqref{E3.5.1} holds for pairs of the
form $(a\otimes 1, 1 \otimes b)$.
Set $f=a\otimes 1, g=a'\otimes 1$ and $h=1\otimes b$
for any $a,a'\in A$ and $b\in B$.
Then $(f,g)$, $(g,h)$ and $(fg,h)$ satisfy \eqref{E3.5.1}.
By Proposition \ref{xxpro4.6}(2), $(f,gh)$ satisfies
\eqref{E3.5.1}. Or equivalently, $(a\otimes 1, a'\otimes b)$
satisfies \eqref{E3.5.1}. By symmetry,
$(1\otimes b, a\otimes b')$, $(a\otimes b, a'\otimes 1)$
and $(a\otimes b, 1\otimes b')$ satisfy \eqref{E3.5.1}.
Recycle the letters and let $f=a\otimes b$, $g=a'\otimes 1$ and
$h=1\otimes b'$. We have that $(f,g)$, $(g,h)$ and $(fg, h)$
all satisfy \eqref{E3.5.1}. By Proposition \ref{xxpro4.6}(2),
$(f,gh)$ satisfies \eqref{E3.5.1}. By choosing special
$a,a',b,b'$ we have that $(a_i\otimes b_j, a_k\otimes b_l)$
satisfies \eqref{E3.5.1} as desired. This says that every
pair of elements from the $\Bbbk$-basis $\{a_i\otimes b_j\}_{i\in I,j\in J}$
satisfies \eqref{E3.5.1}. By Theorem \ref{xxthm4.7}, the weak
restricted Poisson structure on $A\otimes B$ is actually a
restricted Poisson structure.

The above proof shows that there is a unique restricted Poisson
structure on $A\otimes B$ satisfying \eqref{E9.2.2}. Since
\eqref{E9.2.1} is a consequence of \eqref{E3.5.1}, the assertion
follows.
\end{proof}

Now it is reasonable to define a restricted Poisson Hopf
algebra.

\begin{definition}
\label{xxdef9.3} A restricted Poisson algebra $H$ is called a
{\it restricted Poisson Hopf algebra} if there are restricted
Poisson algebra maps $\Delta: H\to H\otimes H$, $\epsilon: H\to \K$
and restricted Poisson algebra anti-automorphism $S: H\to H$ such
that $H$ together with $(\Delta, \epsilon, S)$ becomes a Hopf
algebra.
\end{definition}

One canonical example is the following.

\begin{example}
\label{xxex9.4}
Let $L$ be a restricted Lie algebra. Then $s(L)$
(given in Theorem \ref{xxthm6.5}) is
a restricted Poisson Hopf algebra with the structure
maps determined by
$$\begin{aligned}
\Delta: \quad & x\to x\otimes 1+1\otimes x,\\
\epsilon:\quad & x\to 0,\\
S: \quad & x\to -x
\end{aligned}
$$
for all $x\in L$. It is straightforward to check that $s(L)$ is a
restricted Poisson Hopf algebra. Similarly,
$S(L)$ (given in Example \ref{xxex6.2}) is a
a restricted Poisson Hopf algebra with structure maps
determined as above.
\end{example}

\section{Appendix: The proof of \eqref{E7.2.1}.}
\label{xxsec10}


Let $A=\K[x, y]$ be the Poisson algebra with the Poisson bracket
determined  by
$$\{x, y\}=1.$$
Recall from Example \ref{xxex7.2}
that the deformation quantization of $A$ is isomorphic to
an associative algebra
$(A[[t]], \ast)$ such that the star product of $f, g\in A\subset A[[t]]$
is given by
\begin{equation}
\label{E10.0.1}\tag{E10.0.1}
f\ast g=\mu(\exp(t(\pian{}{x}\ot \pian{}{y}))(f\ot g))
=\mu\left( \sum_{i=0}^{\infty} \frac{t^i}{i!} \; \frac{\partial^i f}{\partial x^i}
\otimes \frac{\partial^i g}{\partial y^i}\right),
\end{equation}
where $\mu\colon A\ot A\to A$ is the multiplication operation of $A$.
Define a sequence of Hasse-Schmidt derivations (or divided power derivations)
$$\partial^{(i)}_1=\frac{1}{i!} \left( \frac{\partial}{\partial x}\right)^i
\quad {\text{and}}\quad
\partial^{(i)}_2=\frac{1}{i!} \left( \frac{\partial}{\partial y}\right)^i, \quad
\forall \; i\geq 0.$$
Then all of them are $\Bbbk$-linear operations from $A$ to $A$. Using these
we can re-write part of \eqref{E10.0.1} as
\begin{equation}
\label{E10.0.2}\tag{E10.0.2}
\exp(t(\pian{}{x}\ot \pian{}{y}))(f\ot g)
=\sum_{i=0}^{\infty} i! \partial^{(i)}_1(f) \partial^{(i)}_2(g)
= \sum_{i=0}^{p-1} i! \partial^{(i)}_1(f) \partial^{(i)}_2(g),
\end{equation}
which is a sum of finitely many terms. Therefore \eqref{E10.0.1}
is well-defined and the summation in \eqref{E10.0.1} is finite.

\begin{remark}
\label{xxrem10.1}
Consider a generalization of \eqref{E10.0.1} in $n$ variables.
Let $B=\Bbbk[ x_1,\cdots,x_n]$ and $\partial_i=\frac{\partial}{\partial x_i}$
for $i=1,\cdots,n$.
\begin{enumerate}
\item[(1)]
For each $c_{ij}\in \Bbbk$, $\exp( t\; c_{ij} \; \partial_i\otimes \partial_j)
(f\otimes g)$ is well-defined for all $f,g\in B$, and it is a sum of finitely
many terms as in \eqref{E10.0.2}.
\item[(2)]
For a set of $\{c_{ij}\}_{1\leq i,j\leq n}$,
\begin{equation}
\label{E10.1.1}\tag{E10.1.1}
\exp(t\sum_{i,j} c_{ij} \partial_i\otimes \partial_j)
(f\otimes g)
=\prod_{i,j}
\left(\exp(t c_{ij} \partial_i\otimes \partial_j) \right) (f\otimes g),
\end{equation}
which is well-defined for all $f,g\in B$ and is a sum of finitely
many terms in a similar fashion as \eqref{E10.0.2} (but more than $p$ terms
in general).
\end{enumerate}
\end{remark}

We now go back to the case of two variables. Clearly,
$$\partial \circ \mu=\mu(\partial \otimes \id +\id\otimes \partial)$$
for $\partial=\pian{}{x}$ or $\pian{}{y}$,
and hence
\[(\pian{}{x}\ot \pian{}{y})^n(\mu\ot \id)=
(\mu\ot \id)(\pian{}{x}\ot \id\ot \pian{}{y}
+\id\ot \pian{}{x}\ot \pian{}{y})^n\]
for all $n\ge 1$. It follows that
\[\exp(t\pian{}{x}\ot \pian{}{y})(\mu\ot \id)
=(\mu\ot \id)\exp(t(\pian{}{x}\ot \id \ot \pian{}{y}
+\id\ot \pian{}{x}\ot \pian{}{y})),\]
and therefore,
\begin{align*}
&(f\ast g)\ast h
= \mu(\exp(t\pian{}{x}\ot \pian{}{y})((f\ast g)\ot h)\\
&= \mu(\exp(t\pian{}{x}\ot \pian{}{y})
((\mu \ot \id)\exp(t\pian{}{x}\ot \pian{}{y}\ot\id)(f\ot g\ot h)))\\
&=\mu(\mu\ot \id)(\exp(t(\pian{}{x}\ot \id \ot \pian{}{y}
+\id \ot \pian{}{x}\ot\pian{}{y}+\pian{}{x}\ot \pian{}{y}\ot \id))
(f\ot g\ot h))
\end{align*}
In general, for $k\geq 2$ and for $f_1, \cdots, f_k\in A\subset A[[t]]$,
\begin{align}\label{E10.1.2}\tag{E10.1.2}
f_1\ast \cdots\ast f_k=\mu^k(\exp(t\sum_{1\le i<j\le k}
\partial^i_j(f_1\ot \cdots\ot f_k))).
\end{align}
where $\partial^i_j=\id^{\ot i-1}\ot \pian{}{x}
\ot \id^{\ot j-i-1} \ot \pian{}{y} \ot \id^{\ot k-j}$
is a map from $A^{\ot k}$ to itself for all $1\le i<j\le k$,
$\mu^2(a\ot b)=(ab)$ for all $a, b\in A$ (extended to a
commutative multiplication on $A[[t]]$), and
$\mu^k=\mu^2(\mu^{k-1}\ot \id)$, $k\ge 3$.

Denote by $M_n^p(f)$ the coefficient of $t^n$ in $f^{\ast p}\in A[[t]]$,
see \eqref{E7.0.1}. For simplicity, we denote the map
\[\Phi^{i_1,\cdots,i_n}_{j_1,\cdots,j_n}=
\mu^p\circ(\partial^{i_1}_{j_1} \circ \cdots
\circ \partial^{i_n}_{j_n})\colon A^{\ot p}\to A\]
for $1\le i_t<j_t\le p, t=1,\cdots, n, n\ge 1$.
It follows from the equation \eqref{E10.1.2} that
\begin{equation}
\label{E10.1.3}\tag{E10.1.3}
M_n^p(f)=\dfrac{1}{n!}\sum_{1\le i_r<j_r\le p\atop r=1,\cdots, n}
\Phi^{i_1,\cdots,i_n}_{j_1,\cdots,j_n}(f^{\ot p})
\end{equation}
for all $0\leq n \leq p-1$.

\begin{claim}\label{xxcl10.2}
Retain the above notation. Then $M_n^p(f)=0$ for all $1\le n\le p-2$
and all $f\in A$.
\end{claim}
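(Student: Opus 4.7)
The plan is to combine the restricted-Lie structure on $(A[[t]], \ast)$ with a $\mathbb{Z}^2$-bi-grading on $A[[t]]$ in which $x, y, t$ have weights $(1,0), (0,1), (1,1)$ respectively. Since $\{x, y\} = 1$ has weight $(0,0)$, the star product is bi-homogeneous, so $f^{\ast p}$ has weight $p\cdot\mathrm{wt}(f)$ and $M_n^p(f)$ has weight $p\cdot\mathrm{wt}(f) - n(1,1)$. The Poisson centre of $A$ equals $\K[x^p, y^p]$, because $\{f, x\} = -\partial_y f$ and $\{f, y\} = \partial_x f$. For brevity, I will write $[h, g]_\ast = \sum_{k \ge 1} t^k \{h, g\}_k$ with $\{h, g\}_k := \frac{1}{k!}(\partial_x^k h \cdot \partial_y^k g - \partial_x^k g \cdot \partial_y^k h)$; this expansion is immediate from \eqref{E10.0.1}.

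\textbf{Reduction to bi-homogeneous $f$.} Each star-commutator $[\cdot, \cdot]_\ast$ raises the $t$-order by at least $1$, so any $(p-1)$-fold iterated star-commutator lies in $t^{p-1}A[[t]]$. Since $\Lambda_p$ in the restricted Lie algebra $(A[[t]], [-,-]_\ast)$ is a $\K$-linear combination of such iterated commutators, $\Lambda^\ast_p(f, g) \in t^{p-1}A[[t]]$. Comparing coefficients of $t^n$ in $(f+g)^{\ast p} = f^{\ast p} + g^{\ast p} + \Lambda^\ast_p(f, g)$ yields $M_n^p(f + g) = M_n^p(f) + M_n^p(g)$ for $0 \le n \le p - 2$, while $M_n^p(\lambda f) = \lambda^p M_n^p(f)$ is immediate from $(\lambda f)^{\ast p} = \lambda^p f^{\ast p}$. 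Since every element of $A = \K[x, y]$ is a finite sum of bi-homogeneous monomials, it suffices to prove the claim when $f$ is bi-homogeneous.

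\textbf{Induction on $n_0$.} Fix bi-homogeneous $f$ of weight $(a, b)$. The identity $\ad_\ast^p = \ad_{\ast p}$ (standard in characteristic $p$ for an associative algebra, via $\ad = L - R$ with $L, R$ commuting) gives $[f^{\ast p}, g]_\ast = \ad_\ast^p(g) \in t^p A[[t]]$ for all $g \in A$. On the other hand, the coefficient of $t^N$ in $[f^{\ast p}, g]_\ast$ is $\sum_{n + k = N,\, k \ge 1} \{M_n^p(f), g\}_k$, which must therefore vanish for every $N \le p - 1$. Suppose inductively that $M_j^p(f) = 0$ for $1 \le j < n_0$ with $n_0 \le p - 2$, and take $N = n_0 + 1$: the $n = 0$ contribution $\{f^p, g\}_{n_0+1}$ is zero because $f^p \in \K[x^p, y^p]$ satisfies $\partial_x^k f^p = \partial_y^k f^p = 0$ for $k \ge 1$, and the $1 \le n < n_0$ contributions vanish by the induction hypothesis. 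The only survivor is $\{M_{n_0}^p(f), g\} = 0$ for all $g$, so $M_{n_0}^p(f)$ is Poisson-central, i.e., lies in $\K[x^p, y^p]$. But nonzero elements of $\K[x^p, y^p]$ have weights in $p\mathbb{Z}_{\ge 0} \times p\mathbb{Z}_{\ge 0}$, while $M_{n_0}^p(f)$ has weight $(pa - n_0, pb - n_0)$ with $-n_0 \not\equiv 0 \pmod p$; hence $M_{n_0}^p(f) = 0$, closing the induction. The main obstacle will be the careful bookkeeping in this induction step — isolating $\{M_{n_0}^p(f), g\}$ as the sole nonvanishing term in the coefficient of $t^{n_0 + 1}$ — after which the bi-grading is what upgrades ``Poisson-central'' to ``zero''.
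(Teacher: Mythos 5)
Your proof is correct, but it proceeds quite differently from the paper's. The paper's argument is purely combinatorial: it expands $M_n^p(f)$ via \eqref{E10.1.3} as a sum of differential monomials indexed by ``tographs'' (multisets of $n$ edges on the $p$ ordered tensor factors), groups the summands into equivalence classes under permutation of connected components, and observes that for $n\le p-2$ the underlying graph must be disconnected, so each class has cardinality $\frac{p!}{(n_1!)^{k_1}\cdots (n_r!)^{k_r}k_1!\cdots k_r!}$ with all $n_i,k_i<p$, hence divisible by $p$. Your route instead reuses the Lie-theoretic machinery already at work in the proof of Proposition \ref{xxpro7.1}: the containment $\Lambda_p^\ast(f,g)\in t^{p-1}A[[t]]$ gives additivity of $M_n^p$ in the range $n\le p-2$ and reduces to monomial $f$; the identity $[f^{\ast p},g]_\ast=\ad_{\ast f}^p(g)\in t^pA[[t]]$, unpacked coefficient by coefficient, shows inductively that $M_{n_0}^p(f)$ is Poisson-central, hence lies in $\K[x^p,y^p]$; and the bigrading with $\mathrm{wt}(t)=(1,1)$ forces a central element of weight $(pa-n_0,pb-n_0)$ to vanish since $p\nmid n_0$. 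I checked the delicate step: in the coefficient of $t^{n_0+1}$ the term $\{f^p,g\}_{n_0+1}$ dies because $f^p\in\K[x^p,y^p]$, the terms with $1\le n<n_0$ die by induction, and only $\{M_{n_0}^p(f),g\}$ survives, as you say. What the paper's approach buys is an explicit combinatorial formula for all the coefficients, including the surviving $p$-map $M_{p-1}^p(f)$ (cf.\ \eqref{E4.8.2}, \eqref{E4.8.3}); what yours buys is brevity and a conceptual explanation (centrality plus a weight obstruction), at the cost of being tied to this particular bigrading --- though replacing the bigrading by total degree with $\deg t=2$ and noting $p\nmid 2n_0$ would adapt your argument to the several-variable situation of Example \ref{xxex7.3}, where the Poisson centre is still $\K[x_1^p,\dots,x_m^p]$ provided the form $(c_{ij})$ is nondegenerate.
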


This appendix is devoted to the proof of Claim \ref{xxcl10.2}.
We need more notations.

Recall that an oriented graph  is a pair $G=(V(G), E(G))$,
where $V(G)$ is the set of vertices and $E(G)$ is the set
of edges. For $\al\in E(G)$, we denote by $s(\al)$ and
$t(\al)$ the source and the target of $\al$, respectively.

\begin{definition}
\label{xxdef10.3}
An  oriented graph $G=(V(G), E(G))$ is called a
\emph{totally ordered graph} (called \emph{tograph} for short),
if the set $V(G)$ of vertices is totally ordered set with the
ordering $\le$ and for every $\al\in E(G)$, $s(\al)<t(\al)$.
\end{definition}

Let $G$ be a tograph (possibly with multiple edges). Being
similar to usual oriented graphs, for each $v\in V(G)$, we
denote the {\it indegree} of $v$ by
$$d_G^+(v)=\#\{\al \in E(G)\mid t(\al)=v\},$$
the {\it outdegree} of $v$ by
$$d_G^-(v)=\#\{\al \in E(G)\mid s(\al)=v\},$$
the {\it degree} of $v$ by
$$d_G(v)=d_G^+(v)+d_G^-(v).$$
For $u, v\in V(G)$, we denote by $\nu(u, v)$ the number of
the edges with the source $u$ and the target $v$, i.e.
$\nu_G(u, v)=\#\{\al\in E(G)\mid s(\al)=u, t(\al)=v\}$.

{Let $G$ and $G'$ be tographs. A bijection $f\colon V(G)\to V(G')$ is called an isomorphism,
if $f$ preserves the order of vertices and $\nu_G(u, v)=\nu_{G'}(f(u), f(v))$ for all $u, v\in V(G)$.
Two tographs $G$ and $G'$ are
said to be \emph{isomorphic}, denoted by $G\cong G'$, provided that
there exists an isomorphism between $G$ and $G'$ .
Clearly, the automorphism group of a tograph $G$
is a trivial group since $f$ preserves the order of vertices and $V(G)$ is totally ordered.}
Suppose that $G_1, \cdots, G_k$ and $G'_1, \cdots, G'_m$ are
the connected components of $G$ and $G'$, respectively. Two
tographs $G$ and $G'$ are said to be \emph{equivalent}, and
denoted by $G\sim G'$, if $m=k$ and there exists a permutation
$\sigma\in \mathbb{S}_m$ such that $G_i$ and $G'_{\sigma(i)}$
are isomorphic for each $i=1, \cdots, m$.

Denote
$$\Gamma_n=\{(i_1, \cdots, i_n; j_1, \cdots, j_n)
\mid 1\le i_t<j_t\le p, t=1, \cdots, n\}.$$
For each given $(i_1, \cdots, i_n; j_1, \cdots, j_n)\in \Gamma_n$,
we can assign a tograph, denoted by
$G({{i_1, \cdots, i_n}\atop{j_1, \cdots, j_n}})$,
where
\begin{enumerate}
\item[$\bullet$]
the set of vertices
$V(G({{i_1, \cdots, i_n}\atop{j_1, \cdots, j_n}}))=\{1, 2, \cdots, p\}$
with the usual ordering of natural numbers, and
\item[$\bullet$]
the set of edges
$E(G({{i_1, \cdots, i_n}\atop{j_1, \cdots, j_n}}))
=\{(i_t, j_t)\mid t=1, \cdots, n\}$.
\end{enumerate}
We denote by $\mathcal{G}_n$ the set of tographs
$G({{i_1, \cdots, i_n}\atop{j_1, \cdots, j_n}})$
for all $(i_1, \cdots, i_n; j_1, \cdots, j_n)\in \Gamma_n$.

We consider the lexicographical order on the
set $\{(i, j)\mid 1\le i<j\le p\}$. To be precise,
$(i, j)<(i', j')$ if and only if $i<i'$ or $i=i', j<j'$.

Let $G$ and $G'$ be the tographs associated to elements
$(i_1, \cdots, i_n;$ $j_1, \cdots, j_n)$ and
$(i'_1, \cdots, i'_n$; $j'_1,\cdots, j'_n) \in \Gamma_n$,
respectively. Clearly, $G=G'$ if and only if there exists
a permutation $\sigma\in \mathbb{S}_n$ such that
$(i'_k, j'_k)=(i_{\sigma(k)}, j_{\sigma(k)})$ for all $k=1,\cdots, n$.
Therefore, for each $(i_1, \cdots, i_n; j_1, \cdots, j_n) \in \Gamma_n$,
there exists a permutation $\sigma\in \mathbb{S}_n$ such that
$G({{i_1, \cdots, i_n}\atop{j_1, \cdots, j_n}})
=G({{i_{\sigma(1)}, \cdots, i_{\sigma(n)}}\atop{j_{\sigma(1)},
\cdots, j_{\sigma(n)}}})$ with $(i_{\sigma(1)}, j_{\sigma(1)})
\le \cdots \le (i_{\sigma(n)}, j_{\sigma(n)})$.

\begin{lemma}
\label{xxlem10.4}
Retain the above notation.
\begin{enumerate}
\item[(1)]
Let $G$ be the tograph associated to
$(i_1, \cdots, i_n; j_1, \cdots, j_n)\in \Gamma_n$. Then
\begin{align}
\label{E10.4.1}\tag{E10.4.1}
\Phi^{i_1,\cdots, i_n}_{j_1,\cdots, j_n}(f^{\ot p})=
\frac{\partial^{d_G(1)}f}{\partial x^{d_G^-(1)}\partial y^{d_G^+(1)}}\cdots
\frac{\partial^{d_G(p)}f}{\partial x^{d_G^-(p)}\partial y^{d_G^+(p)}}
\end{align}
where $d_G^-(i), d_G^+(i)$ and  $d_G(i)$ are the outdegree,
the indegree and the degree of the vertex $i\in V(G)$,
respectively.
\item[(2)]
Let $G$ and $G'$ be the tographs associated to elements
$(i_1, \cdots, i_n; j_1, \cdots, j_n)$ and
$(i'_1, \cdots, i'_n$; $j'_1, \cdots, j'_n)\in \Gamma_n$,
respectively. Then
$$\Phi^{i_1,\cdots, i_n}_{j_1,\cdots, j_n}(f^{\ot p})
=\Phi^{i'_1,\cdots, i'_n}_{j'_1,\cdots, j'_n}(f^{\ot p})$$
for all $f\in A$ if and only if
there exists  a permutation $\sigma\in \mathbb{S}_n$ such that
$$(d_{G'}^+(i), d_{G'}^-(i))=(d_G^+(\sigma(i)), d_G^-(\sigma(i)))$$
for all $i=1, \cdots, p$.
\end{enumerate}
\end{lemma}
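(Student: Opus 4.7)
The plan is to handle the two parts in sequence. For part (1), I would simply unwind the definitions. By construction $\Phi^{i_1,\ldots,i_n}_{j_1,\ldots,j_n} = \mu^p \circ \partial^{i_1}_{j_1} \circ \cdots \circ \partial^{i_n}_{j_n}$, where each $\partial^{i_t}_{j_t}$ applies $\partial/\partial x$ to the $i_t$-th tensor slot and $\partial/\partial y$ to the $j_t$-th slot. Since partial derivatives on $A = \K[x,y]$ commute and operators acting on distinct tensor factors commute, the whole composition is independent of the ordering of the $t$'s. Applied to $f^{\otimes p}$ it produces a pure tensor whose $k$-th slot is $\partial_x^{a_k}\partial_y^{b_k}(f)$, with $a_k = \#\{t : i_t = k\} = d_G^-(k)$ and $b_k = \#\{t : j_t = k\} = d_G^+(k)$. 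Applying $\mu^p$ multiplies the slots together and yields \eqref{E10.4.1}.

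For part (2), the ``if'' direction is immediate from part (1) together with the commutativity of multiplication in $A$: permuting the $p$ factors of the product in \eqref{E10.4.1} does not change its value. The substantive direction is ``only if'', and the plan is to detect the multiset of pairs $\{(d_G^-(k), d_G^+(k))\}$ by specializing $f$ to a universal polynomial whose coefficients record the derivatives at the origin.

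Concretely, I would take $f = \sum_{0 \le a, b \le n} c_{a,b}\, x^a y^b$ with the $c_{a,b}$ treated as indeterminates, and evaluate the identity $\Phi^{i_1,\ldots,i_n}_{j_1,\ldots,j_n}(f^{\otimes p}) = \Phi^{i'_1,\ldots,i'_n}_{j'_1,\ldots,j'_n}(f^{\otimes p})$ at $x = y = 0$. Since every degree appearing satisfies $d_G^\pm(k) \le n$ and, in the range relevant to the appendix, $n \le p-1$, we have $a!\,b! \neq 0$ in $\K$ for all the pairs arising. Thus
\begin{equation*}
\left.\frac{\partial^{a+b} f}{\partial x^a \partial y^b}\right|_{x=y=0} = a!\,b!\, c_{a,b},
\end{equation*}
and using part (1) the left- and right-hand sides of the identity become nonzero scalar multiples of the monomials $\prod_{k=1}^p c_{d_G^-(k),\, d_G^+(k)}$ and $\prod_{k=1}^p c_{d_{G'}^-(k),\, d_{G'}^+(k)}$, respectively, in the polynomial ring $\K[c_{a,b}]$. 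Equality of two such monomials in independent indeterminates forces the multisets of index pairs to coincide, which is exactly the existence of a permutation $\sigma$ of $\{1,\ldots,p\}$ with $(d_{G'}^+(i), d_{G'}^-(i)) = (d_G^+(\sigma(i)), d_G^-(\sigma(i)))$ for every $i$ (the statement's ``$\mathbb{S}_n$'' being a typo for $\mathbb{S}_p$).

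I do not expect any serious obstacle: part (1) is a bookkeeping exercise that only uses commutativity of partial derivatives, and part (2) reduces to separating variables in a polynomial ring. The one point that needs care is the characteristic assumption, and verifying that the bound $d_G^\pm(k) \le n \le p-1$ keeps every factorial appearing in the evaluation trick invertible; since the lemma will be applied only with $n \le p-1$, this is automatic.
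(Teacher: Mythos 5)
Your proof is correct, and for part (1) it is exactly what the paper does: the paper's own proof of (1) is literally ``by the definition of $\Phi^{i_1,\cdots,i_n}_{j_1,\cdots,j_n}$ we immediately get the desired equality,'' and your bookkeeping with commuting slot-wise derivatives is the content behind that sentence. Where you genuinely go beyond the paper is part (2): the paper dismisses it with ``By (1), it is clear,'' which covers the ``if'' direction (commutativity of the product in \eqref{E10.4.1}) but gives no argument for ``only if.'' Your device of specializing $f$ to a generic polynomial $\sum c_{a,b}x^ay^b$, evaluating at the origin, and using the invertibility of $a!\,b!$ for $a,b\le n\le p-1$ to read off the multiset $\{(d_G^-(k),d_G^+(k))\}$ from the monomial $\prod_k c_{d_G^-(k),d_G^+(k)}$ is a clean and correct way to get the converse, and your remark that $\mathbb{S}_n$ should be $\mathbb{S}_p$ is right (the permutation acts on the $p$ vertices). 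The one point to tighten: the hypothesis is an identity for all $f\in A=\K[x,y]$, i.e.\ for $\K$-valued coefficient tuples, whereas you invoke equality of monomials ``in the polynomial ring $\K[c_{a,b}]$''; when $\K$ is finite this passage from a function identity to a polynomial identity is not automatic (degree-$p$ phenomena such as $c^p=c$ on $\F_p$). It does go through here --- two distinct degree-$p$ monomials with nonzero scalar coefficients can always be separated by $\F_p$-points of the coefficient space (set variables outside one support to $0$, or restrict to a single variable and compare exponents in $[1,p-1]$) --- but that half-line of justification should be added. Since the ``only if'' direction is never used in the proof of Claim \ref{xxcl10.2}, none of this affects the appendix.
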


\begin{proof}
(1) By the definition of $\Phi^{i_1,\cdots, i_n}_{j_1,\cdots, j_n}$,
we immediately get the desired equality
\eqref{E10.4.1}.

(2) By (1), it is clear.
\end{proof}

For convenience, we denote
\[G(^{i_1,\cdots, i_n}_{j_1,\cdots, j_n})(f)
=\frac{\partial^{d_G(1)}f}{\partial x^{d_G^-(1)}\partial y^{d_G^+(1)}}\cdots
\frac{\partial^{d_G(p)}f}{\partial x^{d_G^-(p)}\partial y^{d_G^+(p)}},\]
and hence $\Phi^{i_1,\cdots, i_n}_{j_1,\cdots, j_n}(f^{\ot p})
=G(^{i_1,\cdots, i_n}_{j_1,\cdots, j_n})(f)$.

\begin{corollary}
\label{xxcor10.5}
Let $G$ and $G'$ be the tographs in $\mathcal{G}_n$.
If $G$ is equivalent to $G'$, then $G(f)=G'(f)$ for any $f\in \K[x, y]$.
\end{corollary}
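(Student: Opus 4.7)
My plan is to reduce the statement to Lemma \ref{xxlem10.4}(2) by showing that equivalence of tographs forces the multiset of indegree/outdegree pairs at the vertices to coincide. The key observation is that, by \eqref{E10.4.1}, the expression $G(f)$ is just a product over $i=1,\dots,p$ of factors of the form $\partial^{d_G^-(i)+d_G^+(i)} f / \partial x^{d_G^-(i)}\partial y^{d_G^+(i)}$, and this product is commutative in $\K[x,y]$; thus $G(f)$ depends only on the multiset $\{(d_G^-(i),d_G^+(i))\}_{i=1}^p$, not on the way it is indexed by vertices.

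First, I would unpack the definition of tograph isomorphism. If $h\colon V(H)\to V(H')$ is an isomorphism between two tographs (for instance between connected components of $G$ and $G'$), then by definition $\nu_H(u,v)=\nu_{H'}(h(u),h(v))$ for all $u,v$. Summing over $v$ (respectively over $u$) yields $d_H^-(u)=d_{H'}^-(h(u))$ and $d_H^+(u)=d_{H'}^+(h(u))$. Hence an isomorphism of tographs preserves the multiset of (outdegree, indegree) pairs at the vertices.

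Next, since $G\sim G'$, the connected components of $G$ and $G'$ can be paired off so that paired components are isomorphic as tographs. By the preceding step, each pair of isomorphic components contributes the same multiset of (outdegree, indegree) pairs at its vertex set. Unioning over all components, the two full multisets $\{(d_G^-(i),d_G^+(i)) : 1\le i\le p\}$ and $\{(d_{G'}^-(i),d_{G'}^+(i)) : 1\le i\le p\}$ coincide. This produces a permutation $\sigma\in\mathbb{S}_p$ with $(d_{G'}^+(i),d_{G'}^-(i))=(d_G^+(\sigma(i)),d_G^-(\sigma(i)))$ for every $i$, and Lemma \ref{xxlem10.4}(2) then delivers $G(f)=G'(f)$ for all $f\in\K[x,y]$.

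The argument is essentially a definition-chase; there is no serious obstacle. The only mildly subtle point is that the per-component isomorphisms need not assemble into a single order-preserving bijection $V(G)\to V(G')$ (which is why equivalence is strictly weaker than isomorphism of tographs), but this does not matter for $G(f)$ because the product in \eqref{E10.4.1} is commutative, so only the global multiset of degree pairs counts. Once this is recognized, invoking Lemma \ref{xxlem10.4}(2) closes the proof.
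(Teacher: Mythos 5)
Your proof is correct and follows exactly the route the paper intends: the corollary is stated without proof as an immediate consequence of Lemma \ref{xxlem10.4}, and your definition-chase (isomorphism preserves $\nu$, hence the degree pairs; equivalence then matches the global multisets of $(d^-,d^+)$ up to a permutation; commutativity of the product in \eqref{E10.4.1} does the rest) is the intended argument. Your silent correction of $\mathbb{S}_n$ to $\mathbb{S}_p$ in the statement of Lemma \ref{xxlem10.4}(2) is also right.
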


\begin{remark}
The converse of Corollary \ref{xxcor10.5} does not hold and
a counter-example is $G=G({112\atop 234})$ and $G'=G({113\atop 234})$
when $p=5$ and $n=3$.
\end{remark}

\begin{proof}[Sketch Proof of Claim \ref{xxcl10.2}]
For each $(i_1, \cdots, i_n; j_1, \cdots, j_n)\in \Gamma_n$,
we denote by $N(^{i_1, \cdots, i_n}_{j_1, \cdots, j_n})$
the number of the tographs which are equivalent to
$G({{i_1, \cdots, i_n}\atop{j_1, \cdots, j_n}})$.

We consider the decomposition
\[G(^{i_1, \cdots, i_n}_{j_1, \cdots, j_n})
=G_{11}\cup \cdots \cup G_{1k_1}\cup \cdots \cup
G_{r1}\cup \cdots \cup G_{rk_r},\]
where $G_{is}$, for $1\le s\le k_i$ and $1\le i\le r$, are
connected components of $G$ with $G_{is}\cong G_{it}$ for all
$1\leq s,t \leq k_i$, and
$G_{is}\not\cong G_{jt}$ for $i\neq j$. Denote
$|V(G_{is})|=n_i$ for each $i=1, \cdots, r$.
{By definition, a tograph $G'$ is equivalent to $G(^{i_1, \cdots, i_n}_{j_1, \cdots, j_n})$,
if and only if for each $i=1, \cdots, r$, $G'$ admits $k_i$ connected components being isomorphic to $G_{i1}$.
Therefore, by combinatorial counting, we have that
\[N(^{i_1, \cdots, i_n}_{j_1, \cdots, j_n})
=\dfrac{p!}{(n_1!)^{k_1}\cdots (n_r!)^{k_r}k_1!\cdots k_r!}\]
for each $(i_1, \cdots, i_n; j_1, \cdots, j_n)\in \Gamma_n$.}
Clearly, if $n\le p-2$, then the underlying graph of
$G({{i_1, \cdots, i_n}\atop{j_1, \cdots, j_n}})$ is
not connected since
$|E(G({{i_1, \cdots, i_n}\atop{j_1, \cdots, j_n}}))|
=n<p-1=|V(G({{i_1, \cdots, i_n}\atop{j_1, \cdots, j_n}}))|-1$.
Therefore, $r\ge 2$ and $1\le k_t, n_t<p$ for each $t$.
Therefore, by \eqref{E10.1.3},
\begin{align*}
M_n^p(f)=& \frac{1}{n!}
\sum_{(i_1, \cdots, i_n; j_1, \cdots, j_n)\in \Gamma_n}
\Phi^{i_1, \cdots, i_n}_{j_1, \cdots, j_n}(f^{\ot p})\\
=& \frac{1}{n!}
\sum_{G\in \mathcal{G}_n} \dfrac{n!}{\prod_{1\le u<v\le p}\nu(u, v)!} G(f)\\
=& \frac{1}{n!}
\sum_{[G]\in \mathcal{G}_n/\sim }\dfrac{n!}{\prod_{1\le u<v\le p}\nu(u, v)!}
\dfrac{p!}{(n_1!)^{k_1}\cdots (n_r!)^{k_r}k_1!\cdots k_r!}G(f)\\
\equiv & 0 \ \ \ ({\rm mod}\ p)
\end{align*}
where the sum $\sum\limits_{[G]\in \mathcal{G}_n/\sim }$ means
that one take one element in each equivalence class of
$\mathcal{G}_n$ with respect to the relation $\sim$.
\end{proof}

\subsection*{Acknowledgments}
Both Y.-H. Bao and Y. Ye were supported by NSFC
(Grant No. 11401001, 11431010 and 11571329)
and J.J. Zhang by the US National Science
Foundation (grant No. DMS 1402863).

\vspace{0.5cm}

\end{document}